\definecolor{shadecolor}{rgb}{1,1,0}
\newtheorem{theorem}{Theorem}[section]
\newtheorem{lemma}{Lemma}[section]
\newtheorem{proposition}{Proposition}[section]
\theoremstyle{definition}
\newtheorem{definition}{Definition}[section]
\newtheorem{remark}{Remark}[section]
\title[Relative tail entropy]
      {Relative tail entropy for random bundle transformations}
\author[Xianfeng Ma, Ercai Chen]{}
\subjclass{Primary: 37D35, 37A35, 37H99}
 \keywords{Relative  tail entropy, Relative entropy, Variational principle, Principal extension}
 \email{xianfengma@gmail.com}
 \email{ecchen@njnu.edu.cn}
\thanks{The second and third authors are  supported by  National Natural
Science Foundation of China (11271191).
The  second author is supported by  National Natural
Science Foundation of China (11471114).
The third author is partially supported by National
Basic Research Program of China (973 Program) (grant number 2013CB834100).
}
\begin{document}
\maketitle

\renewcommand{\thefootnote}{\fnsymbol{footnote}}
\centerline{
\scshape Xianfeng Ma
}
\medskip
{\footnotesize
   \centerline{Department of Mathematics}
   \centerline{East China University of Science and Technology, Shanghai 200237, China}
} 

\medskip

\centerline{\scshape Ercai Chen}
\medskip
{\footnotesize
 \centerline{School of Mathematical Science}
   \centerline{Nanjing Normal University, Nanjing 210097, China}
   \centerline{and}
   \centerline{Center of Nonlinear Science}
   \centerline{Nanjing University, Nanjing 210093, China}
} %

\bigskip


\begin{abstract}
We introduce the relative tail entropy to establish a variational principle for continuous bundle random dynamical systems.
We also show that the relative tail entropy is conserved by the principal extension.
\end{abstract}

\section{Introduction}\label{sec1}

The entropy measures the complexity of a dynamical systems  both in the topological and measure-theoretic settings.
The topological entropy measures the maximal dynamical complexity versus an average complexity reflected by the measure-theoretic entropy.
The relationship between these two kinds of entropy is the classical variational principle, which states that the topological entropy is the supremum of the measure-theoretic entropy over all invariant measures \cite{Goodman, Goodwyn,Misiurewicz1}.

The entropy concepts can be localized by defining topological tail entropy to quantify the amount of disorder or uncertainty in a system at arbitrary small scales \cite{Misiurewicz}.
The local complexity of a dynamical system can also be measured by the defect of uniformity in the convergence of the measure-theoretic entropy function.
A variational principle related these two aspects is established  in the case of homeomorphism from subtle results in the theory of entropy structure by Downarowicz \cite{Down2005,Boyle}.
An elementary proof of this variational principle for continuous transformations is obtained in terms of essential partitions by Burguet \cite{Burguet}.
Ledrappier \cite{Ledrappier} presents a variational principle between the topological tail entropy and the defect of upper semi-continuity of the measure-theoretic entropy on the cartesian square of the dynamical system, and prove that topological tail entropy is an invariant under any principal extension.

Kifer and Weiss \cite{Kifer2002} introduce the relative tail entropies for continuous bundle RDSs by investigating the open covers and spanning subsets and deduce the equivalence between the two notions. It is shown in \cite{KiferLiu2006} that the defects of the upper semi-continuity of the relative measure-theoretic entropies are bounded from above by the relative tail entropy.

In this paper
we devote to proposing a relative variational principle for the relative tail entropy introduced by using open random covers, which enable us to treat different fibers with different open covers.
We also introduce the factor transformation and consider its basic properties related to the invariant measure and the upper semi-continuity of the relative measure-theoretic entropy for continuous bundle RDSs.
For the product RDS generated by a given  RDS and any other  RDS with the same probability space, we obtain a variational inequality, which shows that the defect of the upper semi-continuity of the relative measure-theoretic entropy of any invariant measure in the product RDS cannot exceed the relative tail entropy of the original RDS.
When the two continuous bundle RDSs coincide, we construct a maximal invariant measure to ensure that the relative tail entropy could be reached, and establish the variational principle.
For the probability space being trivial, it reduces to the variational principle deduced by Ledrappier  \cite{Ledrappier} in deterministic dynamical systems.
As an application of the variational principle we show that the relative tail entropy is conserved by any principal extension.

The paper is organized as follows.
In Section \ref{sec2}, we recall some background in the ergodic theory, introduce the relative tail entropy with respect to open random covers and state our main results.
In Section \ref{sec3}, we give some basic properties of the relative entropy and the relative tail entropy.
In Section \ref{sec4}, we devote to the proof of the variational principle and show that the relative tail entropy is  an invariant under principal extensions.

\section{Preliminaries and main results}\label{sec2}

Let $(\Omega, \mathcal{F},\mathbb{P})$ be a complete countably generated probability space together with a $\mathbb{P}$-preserving transformation $\vartheta$ and $(X,\mathcal{B})$ be a compact metric space with the Borel $\sigma$-algebra $\mathcal{B}$.
Let $\mathcal{E}$ be a measurable subset of $\Omega\times X$ with respect to the product $\sigma$-algebra $\mathcal{F}\times \mathcal{B}$ and the fibers $\mathcal{E}_{\omega}=\{x\in X: (\omega,x)\in \mathcal{E}\}$ be compact.
A continuous bundle random dynamical system (RDS) $T$ over $(\Omega, \mathcal{F},\mathbb{P},\vartheta)$ is generated by the mappings $T_{\omega}:\mathcal{E}_{\omega}\rightarrow \mathcal{E}_{\vartheta\omega}$
so that the map $(\omega,x)\rightarrow T_{\omega}x$ is measurable and the map $x\rightarrow T_{\omega}x$ is continuous for $\mathbb{P}$-almost all (a.a.) $\omega$.
The family $\{T_{\omega}:\omega\in \Omega\}$ is called a random transformation and each $T_{\omega}$ maps the fiber $\mathcal{E}_{\omega}$ to $\mathcal{E}_{\vartheta\omega}$.
The map $\Theta:\mathcal{E}\rightarrow \mathcal{E}$ defined by $\Theta(\omega,x)=(\vartheta\omega, T_{\omega}x)$ is called the skew product transformation. Observe that $\Theta^n(\omega, x)=(\vartheta^n\omega, T_{\omega}^nx)$, where $T_{\omega}^n=T_{\vartheta^{n-1}\omega}\circ\cdots T_{\vartheta\omega}\circ T_{\omega}$ for $n\geq 0$ and $T_{\omega}^0=id$.

Let $\mathcal{P}_{\mathbb{P}}(\Omega\times X)$ be the space of probability measures on $\Omega\times X$ having the marginal $\mathbb{P}$ on $\Omega$
and set $\mathcal{P}_{\mathbb{P}}(\mathcal{E})=\{\mu \in \mathcal{P}_{\mathbb{P}}(\Omega\times X): \mu(\mathcal{E})=1\}$.
Denote by $\mathcal{I}_{\mathbb{P}}(\mathcal{E})$ the space of all $\Theta-$invariant measures in $\mathcal{P}_{\mathbb{P}}(\mathcal{E})$.

Let $\mathcal{S}$ be a sub-$\sigma$-algebra of $\mathcal{F}\times \mathcal{B}$ restricted on $\mathcal{E}$, $\mathcal{R}=\{R_i\}$ be a finite or countable partition of $\mathcal{E}$ into measurable sets. For $\mu\in \mathcal{P}_{\mathbb{P}}(\Omega\times X)$ the conditional entropy of $\mathcal{R}$ given $\sigma$-algebra $\mathcal{S}$ is defined as
\begin{equation*}
H_{\mu}(\mathcal{R}\mid \mathcal{S})=-\int \sum_{i}E(1_{R_i}\mid \mathcal{S}) \log E(1_{R_i}\mid \mathcal{S})d\mu,
\end{equation*}
where $E(1_{R_i}\mid \mathcal{S})$ is the conditional expectation of $1_{R_i}$ with respect to $\mathcal{S}$.

Let $\mu\in \mathcal{I}_{\mathbb{P}}(\mathcal{E})$ and $\mathcal{S}$ is a sub$-\sigma-$algebra of $\mathcal{F}\times \mathcal{B} $ restricted on $\mathcal{E}$ satisfying $\Theta^{-1}\mathcal{S}\subset \mathcal{S}$. For a given measurable partition $\mathcal{R}$ of $\mathcal{E}$, the conditional entropy
$H_{\mu}(\mathcal{R}^{(n)}\mid \mathcal{S})$ is a non-negative sub-additive sequence, where $\mathcal{R}^{(n)}=\bigvee_{i=0}^{n-1}(\Theta^i)^{-1}\mathcal{R}$.
The {\it relative entropy $h_{\mu}(\mathcal{R}\mid \mathcal{S})$ of $\Theta$ with respect to a partition $\mathcal{R}$ } is defined as
\begin{equation*}
h_{\mu}(\mathcal{R}\mid \mathcal{S})=\lim_{n\rightarrow\infty}\frac{1}{n}H_{\mu}(\mathcal{R}^{(n)}\mid \mathcal{S})=\inf_n\frac{1}{n}H_{\mu}(\mathcal{R}^{(n)}\mid \mathcal{S}).
\end{equation*}
The {\it relative entropy of $\Theta$ } is defined by the formula
\begin{equation*}
h_{\mu}(\Theta\mid \mathcal{S})=\sup_{\mathcal{R}}h_{\mu}(\mathcal{R}\mid \mathcal{S}),
\end{equation*}
where the supremum is taken over all finite or countable measurable partitions $\mathcal{R}$ of $\mathcal{E}$ with finite conditional entropy $H_{\mu}(\mathcal{R}\mid \mathcal{S})<\infty$.
The {\it defect of upper semi-continuity of the relative entropy $h_{\mu}(\Theta\mid \mathcal{S}$)} is defined on $\mathcal{I}_{\mathbb{P}}(\mathcal{E})$ as
\begin{equation*}
h^*_m(\Theta\mid \mathcal{S})=
\begin{cases}
\limsup\limits_{\mu\rightarrow m} h_{\mu}(\Theta\mid \mathcal{S})- h_m(\Theta\mid \mathcal{S}),
& \text{if}\,\, h_m(\Theta\mid \mathcal{S})<\infty,\\
\infty, & \text{otherwise}.
\end{cases}
\end{equation*}

Any $\mu \in \mathcal{P}_{\mathbb{P}}(\mathcal{E})$ on $\mathcal{E}$ disintegrates $d\mu(\omega,x)=d\mu_{\omega}(x)d\mathbb{P}(\omega)$ (see \cite[Section 10.2]{Dudley}), where $\mu_{\omega}$ are regular conditional probabilities with respect to the $\sigma-$algebra $\mathcal{F}_{\mathcal{E}}$ formed by all sets $(F\times X)\cap \mathcal{E}$ with $F\in \mathcal{F}$.
This means that $\mu_{\omega}$ is a probability measure on $\mathcal{E}_{\omega}$ for $\mathbb{P}$-a.a. $\omega$ and for any measurable set $R\in \mathcal{E}$,
$\mathbb{P}$-a.s. $\mu_{\omega}(R(\omega))=E(R\mid\mathcal{F}_{\mathcal{E}})$ , where $R(\omega)=\{x: (\omega,x)\in R\}$ and so $\mu(R)=\int \mu_{\omega}(R(\omega))d\mathbb{P}(\omega)$.
The conditional entropy of $\mathcal{R}$ given $\sigma-$algebra $\mathcal{F}_{\mathcal{E}}$ can be written as
\begin{equation*}
H_{\mu}(\mathcal{R}\mid \mathcal{F}_{\mathcal{E}})=-\int \sum_i E(R_i\mid\mathcal{F}_{\mathcal{E}})\log E (R_i\mid\mathcal{F}_{\mathcal{E}})d \mathbb{P}=\int H_{\mu_{\omega}}(\mathcal{R}(\omega))d\mathbb{P},
\end{equation*}
where $\mathcal{R}(\omega)=\{R_i(\omega)\}$, $R_i(\omega)=\{x\in\mathcal{E}_{\omega}:(\omega,x)\in R_i\}$ is a partition of $\mathcal{E}_{\omega}$.

Let $(Y, \mathcal{C})$ be a compact metric space with the Borel $\sigma$-algebra $\mathcal{C}$ and $\mathcal{G}$ be a measurable, with respect to the product $\sigma$-algebra $\mathcal{F}\times \mathcal{C}$, subset of $\Omega\times Y$ with the fibers $\mathcal{G}_{\omega}$ being compact.
The continuous bundle RDS $S$ over $(\Omega, \mathcal{F},\mathbb{P},\vartheta)$ is generated by the mappings $S_{\omega}:\mathcal{G}_{\omega}\rightarrow \mathcal{G}_{\vartheta\omega}$
so that the map $(\omega,y)\rightarrow S_{\omega}y$ is measurable and the map $y\rightarrow S_{\omega}y$ is continuous for $\mathbb{P}$-almost all (a.a.) $\omega$.
The skew product transformation $\Lambda:\mathcal{G}\rightarrow \mathcal{G}$ is defined as $\Lambda(\omega,y)=(\vartheta\omega, S_{\omega}y)$.

\begin{definition}
Let $T, S$ are two continuous bundle RDSs over $(\Omega,\mathcal{F},\mathbb{P},\vartheta)$ on $\mathcal{E}$ and $\mathcal{G}$, respectively.
$T$ is said to be a {\it factor } of $S$, or that $S$ is an {\it extension} of $T$,
if there exists a family of continuous surjective maps $\pi_{\omega}:\mathcal{G}_{\omega}\rightarrow \mathcal{E}_{\omega}$ such that the map $(\omega,y)\rightarrow \pi_{\omega}y$ is measurable and $\pi_{\vartheta\omega}S_{\omega}=T_{\omega}\pi_{\omega}$.
The map $\pi:\mathcal{G}\rightarrow\mathcal{E}$ defined by $\pi(\omega,y)=(\omega,\pi_{\omega}y)$ is called the {\it factor or extension transformation} from $\mathcal{G}$ to $\mathcal{E}$.
The skew product system $(\mathcal{E},\Theta)$ is  called a {\it factor} of $(\mathcal{G},\Lambda)$ or that $(\mathcal{G},\Lambda)$ is an {\it extension } of $(\mathcal{E},\Theta)$.
\end{definition}

Denote by $\mathcal{A}$ the restriction of $\mathcal{F}\times \mathcal{B}$ on $\mathcal{E}$
and set $\mathcal{A}_{\mathcal{G}}=\{ \pi^{-1} A : A\in \mathcal{A}\}$.

\begin{definition}
A continuous bundle RDS $T$ on $\mathcal{E}$ is called a {\it principal factor} of $S$ on $\mathcal{G}$, or that $S$ is a {\it principal extension} of $T$, if for any $\Lambda-$invariant probability measure $m$ in  $\mathcal{I}_{\mathbb{P}}(\mathcal{G})$, the relative entropy of $\Lambda$ with respect to $\mathcal{A}_{\mathcal{G}}$ vanishes, {\it i.e.}, $h_{\mu}(\Lambda\mid \mathcal{A}_{\mathcal{G}})=0.$
\end{definition}

Let $T$ and $S$ are two continuous bundle RDSs over $(\Omega,\mathcal{F},\mathbb{P},\vartheta)$ on $\mathcal{E}$ and $\mathcal{G}$, respectively.
Let $\mathcal{H}=\{(\omega,y,x): y\in \mathcal{G}_{\omega}, x\in \mathcal{E}_{\omega}\}$ and
$\mathcal{H}_{\omega}=\{(y,x):  (\omega,y,x)\in \mathcal{H}\}$. It is not hard to see that $\mathcal{H}$ is a measurable subset of $\Omega\times Y\times X$ with respect to the product $\sigma-$algebra $\mathcal{F}\times\mathcal{C}\times \mathcal{B}$ (as a graph of a  measurable multifunction; see \cite[Proposition III.13]{Castaing}).
The continuous bundle RDS $S\times T$ over $(\Omega, \mathcal{F},\mathbb{P},\vartheta)$ is generated by the family of mappings $(S\times T )_{\omega}:\mathcal{H}_{\omega}\rightarrow \mathcal{H}_{\vartheta\omega}$ with $(y,x)\rightarrow (S_{\omega}y, T_{\omega}x)$.
The map $(\omega,y,x)\rightarrow (S_{\omega}y, T_{\omega}x)$ is measurable and the map $(y,x)\rightarrow (S_{\omega}y, T_{\omega}x)$ is continuous in $(y,x)$ for $\mathbb{P}$-a.a. $\omega$.
The skew product transformation $\Gamma$ generated by $\Theta$ and $\Lambda$ from $\mathcal{H}$ to itself is defined as  $\Gamma (\omega,y,x) =(\vartheta\omega,S_{\omega}y, T_{\omega}x)$.

Let $\pi_{\mathcal{E}}:\mathcal{H} \rightarrow\mathcal{E}$ be the natural projection   with $\pi_{\mathcal{E}}(\omega,y,x)=(\omega,x)$, and $\pi_{\mathcal{G}}:\mathcal{H}\rightarrow \mathcal{G}$  with  $\pi_{\mathcal{G}}(\omega,y,x)=(\omega,y)$.
Then $\pi_{\mathcal{E}}$ and $\pi_{\mathcal{G}}$ are two factor transformations from $\mathcal{H}$ to $\mathcal{E}$ and $\mathcal{G}$, respectively.
Denote by $\mathcal{D}$ the restriction of $\mathcal{F}\times \mathcal{C}$ on $\mathcal{G}$
and set
$\mathcal{D}_{\mathcal{H}}=\pi^{-1}_{\mathcal{G}}( \mathcal{D})=\{(D\times X)\cap \mathcal{H}:D\in \mathcal{D}\}$,
$\mathcal{A}_{\mathcal{H}}=\pi^{-1}_{\mathcal{E}}( \mathcal{A})=\{(A\times Y)\cap \mathcal{H}:A\in \mathcal{A}\}$,
and $\mathcal{F}_{\mathcal{H}}=\{(F\times Y \times X )\cap \mathcal{H}: F\in \mathcal{F}   \}$.

The {\it relative entropy of $\Gamma$ given the $\sigma-$algebra $\mathcal{D}_{H}$} is defined by
\begin{equation*}
h_{\mu}(\Gamma\mid \mathcal{D}_{H})=\sup_{\mathcal{R}}h_{\mu}(\mathcal{R}\mid \mathcal{D}_{H}),
\end{equation*}
where
\begin{equation*}
h_{\mu}(\mathcal{R}\mid \mathcal{D}_{H})=\lim_{n\rightarrow\infty}
\frac{1}{n}H_{\mu}(\bigvee_{i=0}^{n-1}(\Gamma^i)^{-1}\mathcal{R}\mid \mathcal{D}_{H})
\end{equation*}
is the {\it relative entropy of $\Gamma $ with respect to a measurable partition $\mathcal{R}$},
and
the supremum is taken over all finite or countable measurable partitions $\mathcal{R}$ of $\mathcal{H}$ with finite conditional entropy $H_{\mu}(\mathcal{R}\mid \mathcal{D}_{H})<\infty$.

Let $\mathcal{E}^{(2)}=\{(\omega,x,y):x,y\in \mathcal{E}_{\omega}\}$, which is also a measurable subset of $\Omega\times X^2$ with respect to the product $\sigma-$algebra $\mathcal{F}\times \mathcal{B}^2$.
Let $\Theta^{(2)}:\mathcal{E}^{(2)}\rightarrow \mathcal{E}^{(2)}$ be a skew-product transformation with $\Theta^{(2)}(\omega,x,y)=(\vartheta\omega,T_{\omega}x, T_{\omega}y)$.
The map $(\omega,x,y)\rightarrow (T_{\omega}x, T_{\omega}y)$ is measurable and the map $(x,y)\rightarrow (T_{\omega}x, T_{\omega}y)$ is continuous in $(x,y)$ for $\mathbb{P}$-a.a. $\omega$.
Let $\mathcal{E}_1, \mathcal{E}_2$ be two copies of $\mathcal{E}$, {\it i.e.},  $\mathcal{E}_1=\mathcal{E}_2=\mathcal{E}$, and   $\pi_{\mathcal{E}_i}$  be the natural projection from $\mathcal{E}^{(2)}$ to $\mathcal{E}_i$ with  $\pi_{\mathcal{E}_i}(\omega,x_1,x_2)=(\omega,x_i)$, i=1, 2.
Denote by $\mathcal{A}_{\mathcal{E}^{(2)}}=\{(A\times X)\cap\mathcal{E}^{(2)}: A\in \mathcal{F}\times \mathcal{B}\}$.

The {\it relative entropy of $\Theta^{(2)}$ given the $\sigma-$algebra $\mathcal{A}_{\mathcal{E}^{(2)}}$} is defined by
\begin{equation*}
h_{\mu}(\Theta^{(2)}\mid \mathcal{A}_{\mathcal{E}^{(2)}})=\sup_{\mathcal{R}}h_{\mu}(\mathcal{R}\mid \mathcal{A}_{\mathcal{E}^{(2)}}),
\end{equation*}
where
\begin{equation*}
h_{\mu}(\mathcal{R}\mid \mathcal{A}_{\mathcal{E}^{(2)}})=\lim_{n\rightarrow\infty}
\frac{1}{n}H_{\mu}(\bigvee_{i=0}^{n-1}((\Theta^{(2)})^i)^{-1}\mathcal{R}\mid \mathcal{A}_{\mathcal{E}^{(2)}})
\end{equation*}
is the {\it relative entropy of $\Theta^{(2)} $ with respect to a measurable partition $\mathcal{R}$},
and
the supremum is taken over all finite or countable measurable partitions $\mathcal{R}$ of $\mathcal{E}^{(2)}$ with finite conditional entropy $H_{\mu}(\mathcal{R}\mid \mathcal{A}_{\mathcal{E}^{(2)}})<\infty$.

A (closed) random set $Q$ is a  measurable set valued map $\mathcal{Q}:\Omega\rightarrow 2^X $, or the graph of  $Q$ denoted by the same letter, taking values in the (closed)  subsets of compact metric space $X$.
An open random set $U$ is a  set valued map $U:\Omega\rightarrow 2^X $ whose complement $U^c$ is a closed random  set.
A measurable set $Q$ is an open (closed) random  set if the fiber $Q_{\omega}$ is an open (closed) subset of $\mathcal{E}_{\omega}$ in its induced topology from $X$ for $\mathbb{P}-$almost all $\omega$(see \cite[Lemma 2.7]{Crauel}).
A random cover $\mathcal{Q}$ of $\mathcal{E}$ is a finite or countable family of random sets $\{Q\} $  such that $\mathcal{E}_{\omega}=\bigcup_{Q\in \mathcal{Q}}Q(\omega)$ for all $\omega \in \Omega$, and it will be called an open random cover if all $Q\in \mathcal{Q}$ are open random sets.
Set $\mathcal{Q}(\omega)=\{Q(\omega)\}$, $\mathcal{Q}^{(n)}=\bigvee_{i=0}^{n-1}(\Theta^i)^{-1}\mathcal{Q}$ and  $\mathcal{Q}^{(n)}(\omega)=\bigvee_{i=0}^{n-1}(T_{\omega}^i)^{-1}\mathcal{Q}(\vartheta^i\omega)$.
Denote by $\mathfrak{P}(\mathcal{E})$ the set of random covers and $\mathfrak{U}(\mathcal{E})$ the set of open random covers. For $\mathcal{R}, \mathcal{Q}\in \mathfrak{P}(\mathcal{E})$, $\mathcal{R}$ is said  to be finer than $\mathcal{Q}$, which we will write $\mathcal{R}\succ\mathcal{Q}$ if each element of $\mathcal{R}$ is contained in some element of $\mathcal{Q}$.

For any non-empty set $S\subset \mathcal{E}$ and a random cover $\mathcal{R}\in \mathfrak{P}(\mathcal{E})$,
 let $N(S,\mathcal{R})=\min\{\text{card}(\mathcal{U}):\mathcal{U}\subset \mathcal{R}, S\subset \cup_{u\in \mathcal{U}} U\}$ and $N(\emptyset,\mathcal{R} )=1$.
Denote by  $N(S,\mathcal{R})(\omega)=\min\{\text{card}(\mathcal{U}(\omega)):\mathcal{U}(\omega)\subset \mathcal{R}(\omega), S(\omega)\subset \cup_{u\in \mathcal{U}} U(\omega)\}$.
Clearly, $N(S,\mathcal{R})(\omega)\leq N(S,\mathcal{R})$ for each $\omega$.
For $\mathcal{R},\mathcal{Q}\in \mathfrak{P}(\mathcal{E})$, let
$N(\mathcal{R}\mid\mathcal{Q})=\max_{Q\in\mathcal{Q}}N(Q, \mathcal{R})$
and
$N(\mathcal{R}\mid\mathcal{Q})(\omega)=\max_{Q\in\mathcal{Q}}N(Q, \mathcal{R})(\omega)$.

\begin{lemma}
Let $\mathcal{R}\in \mathfrak{U}(\mathcal{E})$ and  $ \mathcal{Q}\in \mathfrak{P}(\mathcal{E})$. The function $\omega\rightarrow N(\mathcal{R}\mid \mathcal{Q})(\omega)$ is measurable.
\end{lemma}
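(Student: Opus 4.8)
The plan is to strip away the two combinatorial layers in the definition of $N(\mathcal{R}\mid\mathcal{Q})(\omega)$ and to reduce the whole question to the measurability of the set of base points over which a fixed measurable subset of $\mathcal{E}$ has non-empty fiber; this final point is where the measurable projection theorem enters. Throughout I would regard $N(\mathcal{R}\mid\mathcal{Q})(\cdot)$ as taking values in $\{1,2,\dots\}\cup\{\infty\}$, so that measurability amounts to showing $\{\omega:N(\mathcal{R}\mid\mathcal{Q})(\omega)\le k\}\in\mathcal{F}$ for every integer $k\ge 1$ (the preimages of $\{k\}$ and of $\{\infty\}$ are then obtained by the usual set operations).

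First I would dispose of the outer maximum. Since $\mathcal{Q}=\{Q\}$ is finite or countable and $N(\mathcal{R}\mid\mathcal{Q})(\omega)=\max_{Q\in\mathcal{Q}}N(Q,\mathcal{R})(\omega)$, for each $k$ one has
\[
\{\omega:N(\mathcal{R}\mid\mathcal{Q})(\omega)\le k\}=\bigcap_{Q\in\mathcal{Q}}\{\omega:N(Q,\mathcal{R})(\omega)\le k\},
\]
a countable intersection. Hence it suffices to prove that each $\omega\mapsto N(Q,\mathcal{R})(\omega)$ is measurable. Next I would unwind the inner minimum: by definition $N(Q,\mathcal{R})(\omega)\le k$ exactly when some subfamily $\mathcal{U}\subset\mathcal{R}$ of cardinality at most $k$ satisfies $Q(\omega)\subset\bigcup_{R\in\mathcal{U}}R(\omega)$. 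Because $\mathcal{R}$ is finite or countable there are only countably many such subfamilies, so
\[
\{\omega:N(Q,\mathcal{R})(\omega)\le k\}=\bigcup_{\substack{\mathcal{U}\subset\mathcal{R}\\ \mathrm{card}(\mathcal{U})\le k}}\big\{\omega:Q(\omega)\subset\textstyle\bigcup_{R\in\mathcal{U}}R(\omega)\big\},
\]
and everything reduces to showing that each set in this countable union is measurable.

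For a fixed subfamily $\mathcal{U}=\{R_1,\dots,R_k\}$ I would rewrite the covering condition as a non-emptiness statement for a fiber. The inclusion $Q(\omega)\subset\bigcup_{j}R_j(\omega)$ fails precisely when the fiber over $\omega$ of the set $W:=Q\cap\bigcap_{j=1}^{k}R_j^{c}$ is non-empty. Since each $R_j$ is an open random set, $R_j^{c}$ is a closed random set and in particular a measurable subset of $\mathcal{E}$, while $Q$ is a random set; therefore $W$ is an $\mathcal{F}\times\mathcal{B}$-measurable subset of $\mathcal{E}\subset\Omega\times X$. Consequently the set we want equals the complement of
\[
\{\omega:W(\omega)\ne\emptyset\}=\operatorname{proj}_{\Omega}(W),
\]
the projection of the measurable set $W$ onto $\Omega$. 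Here I would invoke the measurable projection theorem (in the circle of results behind \cite[Proposition III.13]{Castaing}): because $(\Omega,\mathcal{F},\mathbb{P})$ is a complete countably generated probability space and $X$ is a compact metric space, the projection onto $\Omega$ of any $\mathcal{F}\times\mathcal{B}$-measurable set is a Souslin set and hence belongs to the completed $\sigma$-algebra $\mathcal{F}$. Undoing the reductions then gives measurability of $\omega\mapsto N(\mathcal{R}\mid\mathcal{Q})(\omega)$.

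I expect the projection step to be the only real obstacle: projections of product-measurable sets need not be measurable in general, so the proof genuinely leans on the completeness of $(\Omega,\mathcal{F},\mathbb{P})$ together with the compactness (hence Polishness) of the fibers, exactly the standing hypotheses of Section~\ref{sec2}. The remaining ingredients — reducing the maximum over $\mathcal{Q}$ and the minimum defining $N(Q,\mathcal{R})$ to countable unions and intersections — are routine bookkeeping, the only point needing care being the identification of $\{N(Q,\mathcal{R})(\omega)\le k\}$ with the union over subfamilies, which I would justify by noting that ranging over all index-subfamilies of size at most $k$ captures every cover of $Q(\omega)$ by at most $k$ fibers of $\mathcal{R}(\omega)$.
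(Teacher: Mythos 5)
Your proposal is correct and follows essentially the same route as the paper: both arguments reduce everything to the measurability of the sets $\{\omega: Q(\omega)\subset\bigcup_{i}R_{j_i}(\omega)\}$, obtained as complements of projections of measurable subsets of $\mathcal{E}$ via the measurable projection/multifunction theorem of Castaing--Valadier, and then recover $N(Q,\mathcal{R})(\omega)$ and the maximum over $Q\in\mathcal{Q}$ by countable set operations. Your version is marginally more careful in that it explicitly covers a countable open random cover $\mathcal{R}$ and the value $\infty$, whereas the paper writes $\mathcal{R}=\{R_1,\dots,R_l\}$ finite, but the substance is identical.
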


\begin{proof}
Let $Q\in \mathcal{Q}$ and $\mathcal{R}=\{R_1,\dots,R_l\}$.
For each $\omega,$ there exists a subset $\{j_1,\dots j_k\}$ of $\{1,\dots,l\}$ such that
$Q(\omega)\subset \cup_{i=1}^k R_{j_i}(\omega)$.
Let
\begin{equation*}
\Omega_{j_1,\dots,j_k}=\{ \omega\in \Omega: Q(\omega)\subset \bigcup_{i=1}^k R_{j_i}(\omega)\}.
\end{equation*}
Since $Q$ is a random set and
\begin{equation*}
\Omega_{j_1,\dots,j_k}= \Omega\setminus \{\omega: \big(\mathcal{E}\setminus \bigcup_{i=1}^k R_{j_i}(\omega) \big) \cap Q(\omega)\neq \emptyset     \},
\end{equation*}
$\Omega_{j_1,\dots,j_k}$ is a measurable subset of $\Omega$
(see for instance \cite[Theorem II.30]{Castaing}).
One obtain a finite partition of $\Omega$ into measurable sets $\Omega^J$, where $J$ is a finite family of subsets of $\{1,\dots,l\}$ such that
$\Omega^J=\bigcap_{(j_1,\dots,j_k)\in J}\Omega_{j_1,\dots,j_k}$.
Thus
for each $\omega$,
\begin{equation*}
N(Q,\mathcal{R})(\omega)=\min_{(j_1,\dots,j_k)\in J, 1\leq k\leq l} \text{card}\{j_1,\dots,j_k\},
\end{equation*}
and $N(Q,\mathcal{R})(\omega)$ is measurable in $\omega$.

Notice that for each $t\in \mathbb{R}$,
$$
\{\omega:N(\mathcal{R}\mid \mathcal{Q})(\omega)>t\}=\bigcup_{Q\in \mathcal{Q}}\{\omega:  N(Q, \mathcal{R})(\omega)>t\}.
$$
The result holds from the measurability of $N(Q,\mathcal{R})(\omega)$ in $\omega$.
\end{proof}

For any $\mathcal{R},\mathcal{Q},\mathcal{U},\mathcal{V}\in \mathfrak{P}(\mathcal{E}) $,
the following inequalities always hold.
\begin{align}
&N(\mathcal{R}\mid \mathcal{Q})(\omega)\leq N(\mathcal{U}\mid \mathcal{V})(\omega), \quad\quad  \text{if}\,\,\mathcal{U}\succ\mathcal{R},\, \mathcal{Q}\succ\mathcal{V},\label{n1}\\
&N(\Theta^{-1}\mathcal{R}\mid \Theta^{-1}\mathcal{Q})(\vartheta\omega)
\leq  N(\mathcal{R}\mid \mathcal{Q})(\omega),\label{n2}\\
&N( \mathcal{R}\vee\mathcal{Q}\mid  \mathcal{U})(\omega)
\leq  N(\mathcal{R}\mid \mathcal{U})(\omega)\cdot N(\mathcal{Q}\mid \mathcal{R}\vee \mathcal{U})(\omega),\label{n3}\\
&N( \mathcal{R}\vee\mathcal{Q}\mid \mathcal{U}\vee\mathcal{V})(\omega)\leq
N(\mathcal{R}\mid \mathcal{U})(\omega)\cdot N(\mathcal{Q}\mid \mathcal{V})(\omega).\label{n4}
\end{align}

Let $\mathcal{R}\in \mathfrak{U}(\mathcal{E})$ and  $ \mathcal{Q}\in \mathfrak{P}(\mathcal{E})$.
By the inequality \eqref{n2} and \eqref{n3}
it is easy to see that the sequence $\log N(\mathcal{R}^{(n)}\mid \mathcal{Q}^{(n)})(\omega)$ is subadditive for each $\omega$.
By the subadditive ergodic theorem (see \cite{Walters, Kifer}) the following limit
\begin{equation*}
h_{\Theta}(\mathcal{R}\mid \mathcal{Q})(\omega)=\lim_{n\rightarrow \infty}\frac{1}{n}\log N(\mathcal{R}^{(n)}\mid \mathcal{Q}^{(n)})(\omega)
\end{equation*}
$\mathbb{P}-$almost surely (a.s.) exists and
\begin{equation*}
h_{\Theta}(\mathcal{R}\mid \mathcal{Q})
=\lim_{n\rightarrow \infty}\frac{1}{n}\int \log N(\mathcal{R}^{(n)}\mid \mathcal{Q}^{(n)})(\omega)d\mathbb{P}
=\int h_{\Theta}(\mathcal{R}\mid \mathcal{Q})(\omega)d\mathbb{P}.
\end{equation*}
$h_{\Theta}(\mathcal{R}\mid \mathcal{Q})$ will be called {\it relative tail entropy of $\Theta$ on an open random cover $\mathcal{R}$ with respect to a random cover  $\mathcal{Q}$ }.
  If $\mathcal{Q}$ is a trivial random cover, then $h_{\Theta}(\mathcal{R}\mid \mathcal{Q})$ is called the {\it relative topological entropy $h^{(r)}_{\Theta}(\mathcal{R})$ of $\Theta$ with respect to an open random cover $\mathcal{R}$,} by \eqref{n1},
  \begin{equation}\label{n6}
   h^{(r)}_{\Theta}(\mathcal{R})\geq h_{\Theta}(\mathcal{R}\mid \mathcal{Q}),
   \end{equation}
  for all $\mathcal{Q}\in \mathfrak{P}(\mathcal{E})$.

From  \eqref{n1}, one can see that
\begin{equation}\label{n5}
h_{\Theta}(\mathcal{R}\mid \mathcal{Q})\leq h_{\Theta}(\mathcal{U}\mid \mathcal{V}) \quad \text{if}\,\,\mathcal{U}\succ\mathcal{R},\, \mathcal{Q}\succ\mathcal{V}.
\end{equation}
Then
there exists a limit (finite or infinite) over the directed set $\mathfrak{U}(\mathcal{E})$,
\begin{equation*}
h(\Theta\mid \mathcal{Q})=\lim_{\mathcal{R}\in \mathfrak{U}(\mathcal{E})}h_{\Theta}(\mathcal{R}\mid \mathcal{Q})=\sup_{\mathcal{R}\in \mathfrak{U}(\mathcal{E})}h_{\Theta}(\mathcal{R}\mid \mathcal{Q}),
\end{equation*}
which will be called the {\it relative tail entropy of $\Theta$ with respect to a random cover $\mathcal{Q}$. }
By the inequality \eqref{n5},
\begin{equation*}
h(\Theta\mid \mathcal{Q}) \leq h(\Theta\mid \mathcal{V}), \quad \text{if} \,\,\mathcal{Q}\succ \mathcal{V},
\end{equation*}
then one can take the limit again
\begin{equation*}
h^*(\Theta) =\lim_{\mathcal{Q}\in \mathfrak{P}(\mathcal{E})} h(\Theta\mid \mathcal{Q})
=\inf_{\mathcal{Q}\in \mathfrak{P}(\mathcal{E})}h(\Theta\mid \mathcal{Q}),
\end{equation*}
which is called the {\it relative tail entropy of $\Theta$}.
It follows from the inequality \eqref{n6} that
$h^{(r)}(\Theta)\geq h^*(\Theta)$.

\begin{remark}\label{remark1}
For each open cover  $\xi=\{A_1,\dots,A_k \}$ of the compact space $X$, $\{(\Omega\times A_i)\cap\mathcal{E}\}_{i=1}^k$  naturally form an open random cover of $\mathcal{E}$.
The relative tail entropy related with this kind of random cover is discussed under the name of ``relative conditional entropy" in \cite{Kifer2002}.
\end{remark}

One of our main goals is to establish the following  variational inequality, which shows that
the defect of upper semi-continuity of the relative measure-theoretical entropy function cannot exceed  the relative tail entropy.

\begin{theorem}\label{prop2}
Let  $S\times T$ be the continuous bundle RDS on  $\mathcal{H}$ and $m\in \mathcal{I}_{\mathbb{P}}(\mathcal{H})$.
Then $h^*_m(\Gamma\mid \mathcal{D}_{\mathcal{H}})\leq h^*(\Theta)$.
\end{theorem}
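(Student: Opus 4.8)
The plan is to prove the inequality for each fixed coarse scale and then optimise. Since $h(\Theta\mid\cdot)$ is monotone under refinement and finite random partitions are cofinal in $\mathfrak{P}(\mathcal{E})$, we have $\inf_{\mathcal{Q}}h(\Theta\mid\mathcal{Q})=h^*(\Theta)$ with the infimum taken along finite random measurable partitions; so it suffices to fix such a $\mathcal{Q}$ (lifted to $\mathcal{H}$ through $\pi_{\mathcal{E}}$) and show
\[
\limsup_{\mu\to m}h_\mu(\Gamma\mid\mathcal{D}_{\mathcal{H}})-h_m(\Gamma\mid\mathcal{D}_{\mathcal{H}})\le h(\Theta\mid\mathcal{Q}).
\]
Note that $\pi_{\mathcal{G}}\circ\Gamma=\Lambda\circ\pi_{\mathcal{G}}$ gives $\Gamma^{-1}\mathcal{D}_{\mathcal{H}}\subset\mathcal{D}_{\mathcal{H}}$, so every relative entropy below is defined and subadditive.

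The engine is the decomposition $H_\mu(\mathcal{P}^{(n)}\mid\mathcal{D}_{\mathcal{H}})\le H_\mu(\mathcal{Q}^{(n)}\mid\mathcal{D}_{\mathcal{H}})+H_\mu(\mathcal{P}^{(n)}\mid\mathcal{Q}^{(n)}\vee\mathcal{D}_{\mathcal{H}})$, valid for every measurable partition $\mathcal{P}$ of $\mathcal{H}$ with finite conditional entropy. Dividing by $n$, passing to the limit, and taking the supremum over $\mathcal{P}$ yields
\[
h_\mu(\Gamma\mid\mathcal{D}_{\mathcal{H}})\le h_\mu(\mathcal{Q}\mid\mathcal{D}_{\mathcal{H}})+\sup_{\mathcal{P}}h_\mu(\mathcal{P}\mid\mathcal{Q}\vee\mathcal{D}_{\mathcal{H}}).
\]
Two facts then close the argument. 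First, because $\mathcal{Q}$ is a \emph{fixed} finite partition, $\mu\mapsto h_\mu(\mathcal{Q}\mid\mathcal{D}_{\mathcal{H}})$ is upper semicontinuous at $m$ (each $n^{-1}H_\mu(\mathcal{Q}^{(n)}\mid\mathcal{D}_{\mathcal{H}})$ is upper semicontinuous in $\mu$, the factor $\sigma$-algebra $\mathcal{D}_{\mathcal{H}}$ coming from the continuous map $\pi_{\mathcal{G}}$, and the relative entropy is their infimum), so $\limsup_{\mu\to m}h_\mu(\mathcal{Q}\mid\mathcal{D}_{\mathcal{H}})\le h_m(\mathcal{Q}\mid\mathcal{D}_{\mathcal{H}})\le h_m(\Gamma\mid\mathcal{D}_{\mathcal{H}})$; this is precisely what absorbs the defect. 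Second, I must establish the $\mu$-uniform bound $\sup_{\mathcal{P}}h_\mu(\mathcal{P}\mid\mathcal{Q}\vee\mathcal{D}_{\mathcal{H}})\le h(\Theta\mid\mathcal{Q})$. Granting both and taking $\limsup_{\mu\to m}$ gives the displayed inequality, and then $\inf_{\mathcal{Q}}$ finishes. This decomposition also handles the degenerate case: the same uniform bound applied to $m$ shows $h_m(\Gamma\mid\mathcal{D}_{\mathcal{H}})\le h_m(\mathcal{Q}\mid\mathcal{D}_{\mathcal{H}})+h(\Theta\mid\mathcal{Q})$, so if $h_m(\Gamma\mid\mathcal{D}_{\mathcal{H}})=\infty$ then $h(\Theta\mid\mathcal{Q})=\infty$ for every $\mathcal{Q}$, whence $h^*(\Theta)=\infty$ and the statement is trivial; thus I may assume $h_m(\Gamma\mid\mathcal{D}_{\mathcal{H}})<\infty$.

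For the uniform bound I work fibrewise. Conditioning on $\mathcal{D}_{\mathcal{H}}$ fixes $(\omega,y)$ and leaves a partition of the fibre $\mathcal{E}_\omega$; inside a fixed atom $Q$ of $\mathcal{Q}^{(n)}(\omega)$ the conditional entropy of $\mathcal{P}^{(n)}$ is at most the logarithm of the number of $\mathcal{P}^{(n)}$-atoms meeting $Q$, so that
\[
H_\mu(\mathcal{P}^{(n)}\mid\mathcal{Q}^{(n)}\vee\mathcal{D}_{\mathcal{H}})\le\int\log N(\mathcal{P}^{(n)}\mid\mathcal{Q}^{(n)})(\omega)\,d\mathbb{P}(\omega),
\]
the counts being measurable by the preceding lemma. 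If I had an open random cover $\mathcal{R}\in\mathfrak{U}(\mathcal{E})$ finer than $\mathcal{P}$, then \eqref{n1} would give $N(\mathcal{P}^{(n)}\mid\mathcal{Q}^{(n)})(\omega)\le N(\mathcal{R}^{(n)}\mid\mathcal{Q}^{(n)})(\omega)$, and dividing by $n$ and letting $n\to\infty$ would bound the left-hand side by $h_\Theta(\mathcal{R}\mid\mathcal{Q})\le h(\Theta\mid\mathcal{Q})$, uniformly in $\mu$ and in $\mathcal{P}$. Since $\mathcal{R},\mathcal{Q}$ are $y$-independent covers of $\mathcal{E}$ while $\mathcal{P}$ is a partition of $\mathcal{H}$, the $y$-dependence is harmless: one fixes, for each $(\omega,y)$, the induced partition of $\mathcal{E}_\omega$ and dominates it by the same fibre $\mathcal{R}(\omega)$.

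The hard part is exactly the passage from the partition $\mathcal{P}$ to a dominating open random cover $\mathcal{R}$. Because the atoms of $\mathcal{P}$ need not be open, no open cover is genuinely finer than $\mathcal{P}$ across the fibre boundaries, and a naive comparison against a fine open cover replaces the clean step above by an estimate of the form $N(\mathcal{P}^{(n)}\mid\mathcal{Q}^{(n)})\le D^{\,n}\,N(\mathcal{R}^{(n)}\mid\mathcal{Q}^{(n)})$, where $D$ counts how many atoms of $\mathcal{P}$ an element of $\mathcal{R}$ can meet — an exponential ``boundary overhead'' $\log D$ per unit time that does not vanish for a single scale. I expect to remove it by a refinement argument: choose $\mathcal{P}$ subordinate to a fine open random cover $\mathcal{R}$, and control the number of $\mathcal{P}^{(n)}$-atoms meeting a $\mathcal{Q}^{(n)}$-atom by $N(\mathcal{R}^{(n)}\mid\mathcal{Q}^{(n)})$ up to a factor whose exponential growth rate tends to $0$ as $\mathcal{R}$ is refined, so that the supremum over open covers defining $h(\Theta\mid\mathcal{Q})$ swallows the residual. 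This boundary control (in the spirit of the essential-partition arguments of the deterministic theory) is where the bulk of the technical work lies; once it is in place, the three displayed estimates assemble into defect $\le h(\Theta\mid\mathcal{Q})$ and, after the infimum over $\mathcal{Q}$, into $h^*_m(\Gamma\mid\mathcal{D}_{\mathcal{H}})\le h^*(\Theta)$.
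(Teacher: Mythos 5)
Your overall architecture matches the paper's: fix a coarse scale $\mathcal{Q}$, prove a single-scale inequality of the shape $h_\mu(\Gamma\mid\mathcal{D}_{\mathcal{H}})\le h_\mu(\pi_{\mathcal{E}}^{-1}\mathcal{Q}\mid\mathcal{D}_{\mathcal{H}})+h(\Theta\mid\mathcal{Q})$ uniformly in $\mu$, absorb the first term into $h_m(\Gamma\mid\mathcal{D}_{\mathcal{H}})$ by upper semicontinuity of conditional entropy at $m$, and take the infimum over $\mathcal{Q}$. But the step you yourself flag as ``the hard part'' is a genuine, unfilled gap, and it is the heart of the theorem. Your plan --- choose $\mathcal{P}$ subordinate to a fine open random cover $\mathcal{R}$ and argue that the boundary overhead rate tends to $0$ as $\mathcal{R}$ is refined --- is only announced, not carried out, and it is not clear it works as stated: the number $D$ of $\mathcal{P}$-atoms met by an element of $\mathcal{R}$ has no reason to shrink under refinement of $\mathcal{R}$ alone, and an essential-partition argument in the random bundle setting would be a substantial construction in its own right. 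The paper (Proposition \ref{prop1}) sidesteps this with a cheaper device: given a finite measurable partition $\mathcal{R}=\{R_1,\dots,R_k\}$, choose compact cores $P_i\subset R_i$ of almost full measure (Lemma \ref{lem415} then gives $H_\nu(\mathcal{R}\mid\mathcal{P})<\epsilon$), and set $U_i=P_0\cup P_i$ to obtain an open random cover $\mathcal{U}$ with the \emph{uniform} bound $N(\mathcal{P}\mid\mathcal{U})\le 2$; combined with Lemma \ref{lem3} this yields the single-scale inequality with a constant overhead $\log 2+\epsilon$, which is then annihilated by applying the whole inequality to $\Gamma^m$, $\Theta^m$, $\mathcal{Q}^{(m)}$, invoking the power rules $h_\mu(\Gamma^m\mid\mathcal{D}_{\mathcal{H}})=mh_\mu(\Gamma\mid\mathcal{D}_{\mathcal{H}})$ and $h(\Theta^m\mid\mathcal{Q}^{(m)})=mh(\Theta\mid\mathcal{Q})$ (Proposition \ref{power}), dividing by $m$ and letting $m\to\infty$. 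Without either your boundary-control argument or this compact-core-plus-power trick, your uniform bound $\sup_{\mathcal{P}}h_\mu(\mathcal{P}\mid\mathcal{Q}\vee\mathcal{D}_{\mathcal{H}})\le h(\Theta\mid\mathcal{Q})$ is unproved.

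There is a second, smaller gap: the upper semicontinuity of $\mu\mapsto H_\mu(\mathcal{Q}^{(n)}\mid\mathcal{D}_{\mathcal{H}})$ at $m$ is not a consequence of $\mathcal{D}_{\mathcal{H}}$ being the preimage $\sigma$-algebra of a continuous factor map. For an arbitrary finite measurable partition it fails, because $\mu\mapsto\mu(Q)$ need not be continuous at $m$. One needs the atoms to satisfy $m(\partial Q)=0$, after which the Portmanteau and martingale arguments of Lemma \ref{lem22} (specialized as Lemma \ref{lem2}) apply; such a partition refining the given random cover has to be manufactured first, which is exactly the role of Lemma \ref{Fact666} in the paper's proof of the theorem. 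This second gap is routine to repair, but as written your semicontinuity claim is unjustified.
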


\begin{remark}
For the trivial space $(Y,\mathcal{C})$ and the random cover mentioned in Remark \ref{remark1}, the above result reduces to the theorem presented by Kifer and Liu (See \cite[Theorem 1.3.5]{KiferLiu2006})
\end{remark}

We will obtain the following variational principle when we consider the continuous bundle RDS $T\times T$.

\begin{theorem}\label{theo1}
Let $T$ be a continuous bundle RDS on $\mathcal{E}$. Then
$$
\max\{h^*_{\mu}(\Theta^{(2)} \mid \mathcal{A}_{\mathcal{E}^{(2)}}):\mu \in \mathcal{I}_{\mathbb{P}}(\mathcal{E}^{(2)})\}=h^*(\Theta).
$$

\end{theorem}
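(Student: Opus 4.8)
The plan is to prove the two inequalities separately. The equality of the supremum side with $h^*(\Theta)$ is the easy half; the real content is to construct a \emph{single} $\Theta^{(2)}$-invariant measure on the fibre product at which the value $h^*(\Theta)$ is attained, which also upgrades the supremum to a maximum.

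First I would dispose of ``$\leq$''. Applying Theorem~\ref{prop2} in the special case $S=T$---so that $\mathcal{H}=\mathcal{E}^{(2)}$, $\Gamma=\Theta^{(2)}$ and $\mathcal{D}_{\mathcal{H}}=\mathcal{A}_{\mathcal{E}^{(2)}}$---gives $h^*_{\mu}(\Theta^{(2)}\mid\mathcal{A}_{\mathcal{E}^{(2)}})\leq h^*(\Theta)$ for every $\mu\in\mathcal{I}_{\mathbb{P}}(\mathcal{E}^{(2)})$, hence $\sup_{\mu}h^*_{\mu}(\Theta^{(2)}\mid\mathcal{A}_{\mathcal{E}^{(2)}})\leq h^*(\Theta)$. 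For ``$\geq$'' the guiding picture is that the defect of upper semi-continuity is produced by measures whose second coordinate is spread, at an exponentially fine scale, over cells realising the covering numbers $N(\mathcal{R}^{(n)}\mid\mathcal{Q}^{(n)})(\omega)$, while converging weak-$*$ to a measure carried by the diagonal $\{x_1=x_2\}$, on which $h_{\mu}(\Theta^{(2)}\mid\mathcal{A}_{\mathcal{E}^{(2)}})=0$ because the second coordinate is then $\mathcal{A}_{\mathcal{E}^{(2)}}$-measurable. Since $X$ is compact metric I would first replace the directed families by countable cofinal sequences: a refining sequence $\mathcal{R}_1\prec\mathcal{R}_2\prec\cdots$ in $\mathfrak{U}(\mathcal{E})$ with $\sup_k h_{\Theta}(\mathcal{R}_k\mid\mathcal{Q})=h(\Theta\mid\mathcal{Q})$ for each $\mathcal{Q}$, and a sequence $\mathcal{Q}_1\succ\mathcal{Q}_2\succ\cdots$ in $\mathfrak{P}(\mathcal{E})$ with $\inf_l h(\Theta\mid\mathcal{Q}_l)=h^*(\Theta)$; then for each $l$ I would pick $k(l)\to\infty$ with $h_{\Theta}(\mathcal{R}_{k(l)}\mid\mathcal{Q}_l)\geq h^*(\Theta)-1/l$, so that the $\diam$ of the $\mathcal{R}_{k(l)}$-cells tends to $0$.

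Next comes the finite construction. Fixing $l$ and $n$, on a set of $\omega$ of large $\mathbb{P}$-measure I would select---by the measurability argument used for $N(\mathcal{R}\mid\mathcal{Q})(\omega)$---a worst atom $Q\in\mathcal{Q}_l^{(n)}(\omega)$ attaining $N(\mathcal{R}_{k(l)}^{(n)}\mid\mathcal{Q}_l^{(n)})(\omega)$, together with a maximal $\mathcal{R}_{k(l)}^{(n)}(\omega)$-separated family of second coordinates inside $Q$, and form the measure on $\mathcal{E}^{(2)}$ that equidistributes the second coordinate over these points while its first coordinate ranges over the base region defining $Q$, arranged so that within each $\mathcal{Q}_l^{(n)}$-atom the conditional law of the second coordinate does \emph{not} depend on the fine position of $x_1$. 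Conditioning on the exact first factor then yields relative entropy $\approx\frac1n\log N(\mathcal{R}_{k(l)}^{(n)}\mid\mathcal{Q}_l^{(n)})(\omega)$, and letting $n\to\infty$ via the subadditive ergodic theorem (exactly as in the definition of $h_{\Theta}(\mathcal{R}\mid\mathcal{Q})$) realises $h_{\Theta}(\mathcal{R}_{k(l)}\mid\mathcal{Q}_l)$; the passage $l\to\infty$ is what encodes the $\inf_{\mathcal{Q}}$. Averaging along $\Theta^{(2)}$-orbits and integrating against $\mathbb{P}$, weak-$*$ compactness of $\mathcal{P}_{\mathbb{P}}(\mathcal{E}^{(2)})$ extracts a $\Theta^{(2)}$-invariant $\nu_l$ with $h_{\nu_l}(\Theta^{(2)}\mid\mathcal{A}_{\mathcal{E}^{(2)}})\geq h_{\Theta}(\mathcal{R}_{k(l)}\mid\mathcal{Q}_l)-o(1)\geq h^*(\Theta)-1/l-o(1)$.

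The decisive step is to pass to a single limit. Because the separated second coordinates lie in $\mathcal{R}_{k(l)}^{(n)}$-cells of vanishing diameter, under $\nu_l$ the second coordinate clusters ever more tightly around the first, so any weak-$*$ accumulation point $\mu^*$ of $(\nu_l)$ is carried by the diagonal, lies in $\mathcal{I}_{\mathbb{P}}(\mathcal{E}^{(2)})$, and satisfies $h_{\mu^*}(\Theta^{(2)}\mid\mathcal{A}_{\mathcal{E}^{(2)}})=0$. Combining $\nu_l\to\mu^*$ with $\liminf_l h_{\nu_l}(\Theta^{(2)}\mid\mathcal{A}_{\mathcal{E}^{(2)}})\geq h^*(\Theta)$ gives $\limsup_{\mu\to\mu^*}h_{\mu}(\Theta^{(2)}\mid\mathcal{A}_{\mathcal{E}^{(2)}})-h_{\mu^*}(\Theta^{(2)}\mid\mathcal{A}_{\mathcal{E}^{(2)}})\geq h^*(\Theta)$, which with the upper bound forces $h^*_{\mu^*}(\Theta^{(2)}\mid\mathcal{A}_{\mathcal{E}^{(2)}})=h^*(\Theta)$ and the attainment of the maximum. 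I expect the main obstacle to be precisely this entropy bookkeeping: one must keep the finite-scale relative entropies of the $\nu_l$ uniformly close to the tail value while simultaneously forcing the limit's relative entropy to $0$, doing so compatibly both with the $\inf_{\mathcal{Q}}$ and with the measurable fibrewise disintegration $d\mu(\omega,x_1,x_2)=d\mu_{\omega}(x_1,x_2)\,d\mathbb{P}(\omega)$. A convenient device for the final estimate should be the Abramov--Rokhlin-type splitting $h_{\nu}(\Theta^{(2)}\mid\mathcal{F}_{\mathcal{E}^{(2)}})=h_{(\pi_{\mathcal{E}_1})_*\nu}(\Theta\mid\mathcal{F}_{\mathcal{E}})+h_{\nu}(\Theta^{(2)}\mid\mathcal{A}_{\mathcal{E}^{(2)}})$, where $\mathcal{F}_{\mathcal{E}^{(2)}}$ is the base $\Omega$-algebra on $\mathcal{E}^{(2)}$, which isolates the relative term one needs to control.
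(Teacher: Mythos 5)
Your overall route is the same as the paper's: the inequality ``$\leq$'' is Theorem~\ref{prop2} specialized to $S=T$, and the inequality ``$\geq$'' together with the attainment of the maximum is obtained by building, for each coarse cover $\mathcal{Q}$, an invariant measure on $\mathcal{E}^{(2)}$ whose second coordinate is equidistributed over a maximal separated set inside a worst atom of $\mathcal{Q}^{(n)}(\omega)$ (this is Proposition~\ref{prop3}), then letting the covers refine and extracting a weak-$*$ limit carried by the diagonal (Proposition~\ref{prop4}). The ``entropy bookkeeping'' you flag as the main obstacle is handled in the paper exactly as you anticipate: one works with partitions of null boundary (Lemma~\ref{Fact666}) so that $\mu\mapsto H_{\mu}(\cdot\mid\mathcal{A}_{\mathcal{E}^{(2)}})$ is upper semi-continuous at the limit measures (Lemma~\ref{lem2}), which lets the finite-scale estimates survive both weak-$*$ limits.

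One step of your argument is justified incorrectly, although the construction itself survives. You assert that the limit $\mu^*$ is carried by the diagonal ``because the separated second coordinates lie in $\mathcal{R}_{k(l)}^{(n)}$-cells of vanishing diameter.'' That is not the operative mechanism: the separated family necessarily spreads over an entire atom $Q$ of $\mathcal{Q}_l^{(n)}(\omega)$ (it must, in order to have cardinality at least $N(\mathcal{R}_{k(l)}^{(n)}\mid\mathcal{Q}_l^{(n)})(\omega)$), so under $\nu_l$ the distance between the two coordinates is controlled by the fibrewise diameter of the atoms of $\mathcal{Q}_l$, not of $\mathcal{R}_{k(l)}$. What the construction actually yields is that $\nu_l$ is supported on $\bigcup_j\{(\omega,x,y):x,y\in \overline{Q_j^{(l)}}(\omega)\}$, and the diagonal support of $\mu^*$ follows only if you additionally insist that the refining sequence $\mathcal{Q}_l$ has fibrewise diameters tending to $0$. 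This is harmless---refining $\mathcal{Q}$ only decreases $h(\Theta\mid\mathcal{Q})$, so such a sequence is cofinal for the infimum defining $h^*(\Theta)$---but it must be stated, since otherwise the limit need not be diagonal and the key identity $h_{\mu^*}(\Theta^{(2)}\mid\mathcal{A}_{\mathcal{E}^{(2)}})=0$ can fail. The Abramov--Rokhlin-type splitting you propose for the final estimate is not needed at this point; the paper invokes it only later, in the proof of Theorem~\ref{th3} on principal extensions.
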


\begin{definition}
A continuous bundle RDS $T$ is called relatively asymptotically $h$-expansive if the relative tail entropy $h^*(\Theta)=0$.
\end{definition}

\begin{remark}
Theorem \ref{theo1} indicates that the upper semi-continuity of the function $ h_{(\cdot)}(\Theta^{(2)}\mid\mathcal{A}_{\mathcal{E}^{(2)}})$ on $\mathcal{I}_{\mathbb{P}}(\mathcal{E}^{(2)})$ is equivalent to relatively asymptotically $h$-expansiveness  of $T$.
Moreover, by Theorem \ref{prop2}, for a continuous bundle RDS $S\times T$ on $\mathcal{H}$ generated by the continuous bundle RDS $T$ and any other continuous bundle RDS $S$, relatively asymptotically $h$-expansiveness of $T$  is also equivalent to the upper semi-continuity of the function
$ h_{(\cdot)}(\Gamma\mid\mathcal{D}_{\mathcal{H}})$ on $\mathcal{I}_{\mathbb{P}}(\mathcal{H})$.
In general,  the upper semi-continuity of the usual measure-theoretical entropy does not imply the relatively asymptotically $h$-expansiveness of a random transformation, even in the deterministic case (see \cite[Example 6.4]{Misiurewicz}).
An equivalence condition with respect to the upper semi-continuity of the measure-theoretic entropy is given by making use of the local entropy theory (See \cite[Lemma 6.4]{HuangYi})

\end{remark}

As an application of the variational principle, we will derive the following result.

\begin{theorem}\label{th3}
Let $T, S$ be  two continuous bundle RDSs on $\mathcal{E}$ and $\mathcal{G}$, respectively. Suppose that $S$ is a principal extension of $T$ via the factor transformation $\pi$, then $h^*(\Lambda)=h^*(\Theta)$.
\end{theorem}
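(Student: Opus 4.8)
The plan is to deduce the statement from the variational principle of Theorem \ref{theo1} by comparing the two product systems $\Theta^{(2)}$ on $\mathcal{E}^{(2)}$ and $\Lambda^{(2)}$ on $\mathcal{G}^{(2)}=\{(\omega,y_1,y_2):y_1,y_2\in\mathcal{G}_\omega\}$, where $\Lambda^{(2)}(\omega,y_1,y_2)=(\vartheta\omega,S_\omega y_1,S_\omega y_2)$. Write $\pi^{(2)}(\omega,y_1,y_2)=(\omega,\pi_\omega y_1,\pi_\omega y_2)$; this is a factor transformation from $(\mathcal{G}^{(2)},\Lambda^{(2)})$ onto $(\mathcal{E}^{(2)},\Theta^{(2)})$, and $\mathcal{A}_{\mathcal{G}^{(2)}}$ is defined in exact analogy with $\mathcal{A}_{\mathcal{E}^{(2)}}$. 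By Theorem \ref{theo1} we have $h^*(\Theta)=\max_{\mu}h^*_\mu(\Theta^{(2)}\mid\mathcal{A}_{\mathcal{E}^{(2)}})$ and $h^*(\Lambda)=\max_{\nu}h^*_\nu(\Lambda^{(2)}\mid\mathcal{A}_{\mathcal{G}^{(2)}})$, so it suffices to match these two maxima through $\pi^{(2)}$.

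First I would show that $S\times S$ is itself a principal extension of $T\times T$, i.e. $h_\nu(\Lambda^{(2)}\mid(\pi^{(2)})^{-1}\mathcal{A}^{(2)})=0$ for every $\Lambda^{(2)}$-invariant $\nu$, where $\mathcal{A}^{(2)}$ is the restriction of $\mathcal{F}\times\mathcal{B}^2$ to $\mathcal{E}^{(2)}$. Splitting a generating partition into its two $\mathcal{G}$-coordinates and using subadditivity of relative entropy together with monotonicity under enlarging the conditioning $\sigma$-algebra, this fiber entropy is bounded by $h_{\nu_1}(\Lambda\mid\mathcal{A}_{\mathcal{G}})+h_{\nu_2}(\Lambda\mid\mathcal{A}_{\mathcal{G}})$, where $\nu_1,\nu_2\in\mathcal{I}_{\mathbb{P}}(\mathcal{G})$ are the two marginals of $\nu$; both summands vanish since $S$ is a principal extension of $T$. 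Next, fixing an invariant $\nu$ with $\mu=\pi^{(2)}_*\nu$, I would route through the intermediate $\Lambda^{(2)}$-invariant sub-$\sigma$-algebra $\mathcal{B}_0=(\pi^{(2)})^{-1}\mathcal{A}_{\mathcal{E}^{(2)}}$, which lies both in $\mathcal{A}_{\mathcal{G}^{(2)}}$ and in $\mathcal{C}_0=(\pi^{(2)})^{-1}\mathcal{A}^{(2)}$. Applying the Abramov--Rokhlin type addition formula for relative entropy from Section \ref{sec3} on the chains $\mathcal{B}_0\subset\mathcal{A}_{\mathcal{G}^{(2)}}$ and $\mathcal{B}_0\subset\mathcal{C}_0$ gives $h_\nu(\Lambda^{(2)}\mid\mathcal{B}_0)=h_\nu(\Lambda^{(2)}\mid\mathcal{A}_{\mathcal{G}^{(2)}})+h_{\nu_1}(\Lambda\mid\mathcal{A}_{\mathcal{G}})$ and $h_\nu(\Lambda^{(2)}\mid\mathcal{B}_0)=h_\nu(\Lambda^{(2)}\mid\mathcal{C}_0)+h_\mu(\Theta^{(2)}\mid\mathcal{A}_{\mathcal{E}^{(2)}})$, the final term by invariance of relative entropy under the factor $\pi^{(2)}$. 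Since $h_{\nu_1}(\Lambda\mid\mathcal{A}_{\mathcal{G}})=0$ and $h_\nu(\Lambda^{(2)}\mid\mathcal{C}_0)=0$, I obtain the pointwise identity $h_\nu(\Lambda^{(2)}\mid\mathcal{A}_{\mathcal{G}^{(2)}})=h_{\pi^{(2)}_*\nu}(\Theta^{(2)}\mid\mathcal{A}_{\mathcal{E}^{(2)}})$.

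It then remains to transfer this identity to the defects of upper semi-continuity. For the inequality $h^*(\Lambda)\le h^*(\Theta)$: given any invariant $\nu_0$ with $\mu_0=\pi^{(2)}_*\nu_0$, continuity of $\nu\mapsto\pi^{(2)}_*\nu$ gives $\limsup_{\nu\to\nu_0}h_\nu(\Lambda^{(2)}\mid\mathcal{A}_{\mathcal{G}^{(2)}})=\limsup_{\nu\to\nu_0}h_{\pi^{(2)}_*\nu}(\Theta^{(2)}\mid\mathcal{A}_{\mathcal{E}^{(2)}})\le\limsup_{\mu\to\mu_0}h_\mu(\Theta^{(2)}\mid\mathcal{A}_{\mathcal{E}^{(2)}})$, and since the identity also gives equality of the base values at $\nu_0$ and $\mu_0$, this yields $h^*_{\nu_0}(\Lambda^{(2)}\mid\mathcal{A}_{\mathcal{G}^{(2)}})\le h^*_{\mu_0}(\Theta^{(2)}\mid\mathcal{A}_{\mathcal{E}^{(2)}})\le h^*(\Theta)$; taking the maximum over $\nu_0$ gives the bound. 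For $h^*(\Lambda)\ge h^*(\Theta)$: choose $\mu_0$ attaining $h^*(\Theta)$ and a sequence $\mu_k\to\mu_0$ with $h_{\mu_k}\to\limsup_{\mu\to\mu_0}h_\mu$, lift each $\mu_k$ to a $\Lambda^{(2)}$-invariant $\nu_k$ with $\pi^{(2)}_*\nu_k=\mu_k$ (invariant lifts through the RDS factor exist and $\mathcal{I}_{\mathbb{P}}(\mathcal{G}^{(2)})$ is weak$^*$ compact), and pass to a convergent subsequence $\nu_k\to\nu_0$, so $\pi^{(2)}_*\nu_0=\mu_0$. The pointwise identity and the choice of $\mu_k$ then force $h^*_{\nu_0}(\Lambda^{(2)}\mid\mathcal{A}_{\mathcal{G}^{(2)}})\ge h^*_{\mu_0}(\Theta^{(2)}\mid\mathcal{A}_{\mathcal{E}^{(2)}})=h^*(\Theta)$. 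Combining the two inequalities proves $h^*(\Lambda)=h^*(\Theta)$.

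The main obstacle I expect is the structural input of the second paragraph: verifying that principality is inherited by the self-product $S\times S$, and justifying the relative Abramov--Rokhlin addition formula and the factor invariance of relative entropy in the random bundle setting with the correct bookkeeping of the conditioning $\sigma$-algebras $\mathcal{B}_0\subset\mathcal{A}_{\mathcal{G}^{(2)}},\mathcal{C}_0$. The secondary difficulty is the analytic care in the $\ge$ direction, namely selecting the limit lift $\nu_0$ as a weak$^*$ cluster point of lifts so that it actually realizes the defect of upper semi-continuity inherited from $\mu_0$.
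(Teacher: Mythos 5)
Your proposal is correct and follows the same overall architecture as the paper's proof: pass to the squared systems, exhibit $\phi=\pi^{(2)}:(\mathcal{G}^{(2)},\Lambda^{(2)})\rightarrow(\mathcal{E}^{(2)},\Theta^{(2)})$ as a factor transformation, establish the pointwise identity $h_{\nu}(\Lambda^{(2)}\mid \mathcal{D}_{\mathcal{G}^{(2)}})=h_{\phi\nu}(\Theta^{(2)}\mid \mathcal{A}_{\mathcal{E}^{(2)}})$ for every invariant $\nu$ (your $\mathcal{A}_{\mathcal{G}^{(2)}}$ is the paper's $\mathcal{D}_{\mathcal{G}^{(2)}}$), and then match the two maxima in Theorem \ref{theo1} using continuity and surjectivity of the induced map on invariant measures (Propositions \ref{propm1} and \ref{prop33}). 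The only genuine divergence is in how the pointwise identity is obtained: the paper identifies each conditional entropy with the measure-theoretic entropy of a marginal via the product formula it cites from Downarowicz and then applies the Abramov--Rokhlin formula once, using principality of $\pi$; you instead first verify that $S\times S$ is a principal extension of $T\times T$ and then run the addition formula twice along the chains $\mathcal{B}_0\subset\mathcal{A}_{\mathcal{G}^{(2)}}$ and $\mathcal{B}_0\subset\mathcal{C}_0$ --- the same inputs rearranged. Your version has the merit of making explicit two points the paper compresses into single lines: the principality of the product extension, and the lifting of nearby invariant measures needed to pass from the pointwise identity of entropies to the equality of the defects $h^*_{\nu}=h^*_{\phi\nu}$ (in particular the $\geq$ direction, where the lift $\nu_0$ must be chosen as a weak$^*$ cluster point of lifts of a maximizing sequence).
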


\begin{remark}
Theorem \ref{th3} shows that the relative tail entropy for random transformations could be  conserved by the principal extension. If two continuous bundle RDSs have a common principal extension,  they are equivalent in the sense of the principal extension.
\end{remark}

\section{Relative tail entropy and relative entropy}\label{sec3}

we will  first give two propositions regarded as the relative tail entropy, which will be needed in the proof of variational inequality later.

\begin{proposition}\label{power}
Let $T$ be a continuous bundle RDS on $\mathcal{E}$, and $\mathcal{Q}$ be a  random cover of $\mathcal{E}$. Then for each $m\in \mathbb{N}$,
\begin{equation*}
h(\Theta^m\mid \mathcal{Q}^{(m)})=mh(\Theta\mid \mathcal{Q}),
\end{equation*}
where $\mathcal{Q}^{(m)}= \bigvee_{i=0}^{m-1}(\Theta^i)^{-1}\mathcal{Q}$.
\end{proposition}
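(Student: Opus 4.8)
The plan is to prove the equality by establishing the two inequalities $h(\Theta^m\mid\mathcal{Q}^{(m)})\le m\,h(\Theta\mid\mathcal{Q})$ and $h(\Theta^m\mid\mathcal{Q}^{(m)})\ge m\,h(\Theta\mid\mathcal{Q})$ separately. The whole argument rests on a single reindexing identity. Since $\vartheta$ is $\mathbb{P}$-preserving, so is $\vartheta^m$, hence $\Theta^m$ is a legitimate continuous bundle RDS over $(\Omega,\mathcal{F},\mathbb{P},\vartheta^m)$ and all the notions of Section \ref{sec2} apply to it; moreover $\mathcal{R}^{(m)}=\bigvee_{j=0}^{m-1}(\Theta^j)^{-1}\mathcal{R}$ is again an open random cover, i.e.\ $\mathcal{R}^{(m)}\in\mathfrak{U}(\mathcal{E})$ whenever $\mathcal{R}\in\mathfrak{U}(\mathcal{E})$. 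I would first record that, using the cocycle relation $T^{mi+j}_\omega=T^{\,j}_{\vartheta^{mi}\omega}\circ T^{mi}_\omega$, the exponents $mi+j$ with $0\le i\le n-1$ and $0\le j\le m-1$ run through $\{0,1,\dots,mn-1\}$ exactly once, so that fiberwise
$$\bigvee_{i=0}^{n-1}\big(T^{mi}_\omega\big)^{-1}\mathcal{Q}^{(m)}(\vartheta^{mi}\omega)=\bigvee_{k=0}^{mn-1}\big(T^{k}_\omega\big)^{-1}\mathcal{Q}(\vartheta^{k}\omega)=\mathcal{Q}^{(mn)}(\omega),$$
and likewise that the $n$-fold $\Theta^m$-refinement of $\mathcal{R}^{(m)}$ equals $\mathcal{R}^{(mn)}$. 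In words: refining under $\Theta^m$ and then spreading $\mathcal{Q}^{(m)}$ (resp.\ $\mathcal{R}^{(m)}$) over $n$ accelerated steps reproduces the ordinary $mn$-step refinement of $\mathcal{Q}$ (resp.\ $\mathcal{R}$).

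For the upper bound I would fix $\mathcal{R}\in\mathfrak{U}(\mathcal{E})$ and compare the $\Theta^m$-refinement $\bigvee_{i=0}^{n-1}(\Theta^{mi})^{-1}\mathcal{R}$, which only uses indices that are multiples of $m$, with the full refinement $\mathcal{R}^{(mn)}$; the latter is finer, so $\mathcal{R}^{(mn)}\succ\bigvee_{i=0}^{n-1}(\Theta^{mi})^{-1}\mathcal{R}$. Combining \eqref{n1} with the identity that the $\Theta^m$-conditioning cover is exactly $\mathcal{Q}^{(mn)}$ gives, fiberwise,
$$N\Big(\bigvee_{i=0}^{n-1}(\Theta^{mi})^{-1}\mathcal{R}\ \Big|\ \mathcal{Q}^{(mn)}\Big)(\omega)\le N\big(\mathcal{R}^{(mn)}\mid\mathcal{Q}^{(mn)}\big)(\omega).$$
Dividing by $n$, taking logarithms and letting $n\to\infty$, the right-hand side equals $m$ times the limit of $\tfrac{1}{mn}\log N(\mathcal{R}^{(mn)}\mid\mathcal{Q}^{(mn)})(\omega)$, which is the subsequence $N=mn$ of the $\mathbb{P}$-a.s.\ convergent sequence defining $h_{\Theta}(\mathcal{R}\mid\mathcal{Q})(\omega)$; hence $h_{\Theta^m}(\mathcal{R}\mid\mathcal{Q}^{(m)})(\omega)\le m\,h_{\Theta}(\mathcal{R}\mid\mathcal{Q})(\omega)$. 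Integrating over $\omega$ and taking the supremum over $\mathcal{R}\in\mathfrak{U}(\mathcal{E})$ yields $h(\Theta^m\mid\mathcal{Q}^{(m)})\le m\,h(\Theta\mid\mathcal{Q})$.

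For the lower bound I would feed the specific cover $\mathcal{R}^{(m)}\in\mathfrak{U}(\mathcal{E})$ into the definition of $h(\Theta^m\mid\mathcal{Q}^{(m)})$. By the reindexing identity its $\Theta^m$-refinements are precisely $\mathcal{R}^{(mn)}$ and its conditioning covers are precisely $\mathcal{Q}^{(mn)}$, so
$$h_{\Theta^m}(\mathcal{R}^{(m)}\mid\mathcal{Q}^{(m)})(\omega)=\lim_{n\to\infty}\frac1n\log N\big(\mathcal{R}^{(mn)}\mid\mathcal{Q}^{(mn)}\big)(\omega)=m\,h_{\Theta}(\mathcal{R}\mid\mathcal{Q})(\omega)$$
by the same subsequence remark. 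Integrating gives $h(\Theta^m\mid\mathcal{Q}^{(m)})\ge h_{\Theta^m}(\mathcal{R}^{(m)}\mid\mathcal{Q}^{(m)})=m\,h_{\Theta}(\mathcal{R}\mid\mathcal{Q})$, and a supremum over $\mathcal{R}$ yields $h(\Theta^m\mid\mathcal{Q}^{(m)})\ge m\,h(\Theta\mid\mathcal{Q})$; together with the previous paragraph this closes the proof. I expect no genuine difficulty here, only careful bookkeeping: the delicate points are verifying the reindexing identity \emph{at the fiber level} through the random cocycle $T^n_\omega$ rather than for a single deterministic map, and checking that passing to the subsequence $N=mn$ is harmless because the full sequence $\tfrac1N\log N(\mathcal{R}^{(N)}\mid\mathcal{Q}^{(N)})(\omega)$ already converges $\mathbb{P}$-a.s.\ by the subadditive ergodic theorem invoked in Section \ref{sec2}.
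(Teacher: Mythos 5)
Your proof is correct and follows essentially the same route as the paper: the same reindexing identity $\bigvee_{j=0}^{n-1}(\Theta^{mj})^{-1}\mathcal{R}^{(m)}=\mathcal{R}^{(mn)}$ gives $h_{\Theta^m}(\mathcal{R}^{(m)}\mid\mathcal{Q}^{(m)})=m\,h_{\Theta}(\mathcal{R}\mid\mathcal{Q})$, the lower bound comes from feeding the particular open random cover $\mathcal{R}^{(m)}$ into the supremum, and the upper bound from the monotonicity \eqref{n1} applied to $\mathcal{R}\prec\mathcal{R}^{(m)}$. The only cosmetic difference is that you make explicit the two bookkeeping points (that $\mathcal{R}^{(m)}\in\mathfrak{U}(\mathcal{E})$ and that passing to the subsequence $N=mn$ is harmless) which the paper leaves implicit.
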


\begin{proof}
Let $\mathcal{R}$ be an open random  cover of $\mathcal{E}$. Since
\begin{equation*}
\bigvee_{j=0}^{n-1}(\Theta^{mj})^{-1}\big(\bigvee_{i=0}^{m-1}(\Theta^i)^{-1}\mathcal{R} \big)
=\bigvee_{i=0}^{nm-1}(\Theta^i)^{-1}\mathcal{R},
\end{equation*}
Then  for each $\omega\in \Omega$,
\begin{equation*}
N(\bigvee_{j=0}^{n-1}(\Theta^{mj})^{-1} \mathcal{R}^{(m)})\mid \bigvee_{j=0}^{n-1}(\Theta^{mj})^{-1} \mathcal{Q}^{(m)})(\omega)=N(\mathcal{R}^{(nm)}\mid \mathcal{Q}^{(nm)})(\omega).
\end{equation*}
By the definition of the  relative   tail entropy of $\Theta^m$ on open random  cover $\mathcal{R}^{(m)}$ with respect to $\mathcal{Q}^{(m)}$,
\begin{align*}
h_{\Theta^m}(\mathcal{R}^{(m)}\mid \mathcal{Q}^{(m)})
&=\lim_{n\rightarrow}\frac{1}{n}\int \log N(\bigvee_{j=0}^{n-1}(\Theta^{mj})^{-1} \mathcal{R}^{(m)})\mid \bigvee_{j=0}^{n-1}(\Theta^{mj})^{-1} \mathcal{Q}^{(m)})(\omega)d\mathbb{P}\\
&=\lim_{n\rightarrow}\frac{1}{n}\int \log
N(\mathcal{R}^{(nm)}\mid \mathcal{Q}^{(nm)})(\omega)
d\mathbb{P}\\
&=\lim_{n\rightarrow}m\frac{1}{nm}\int \log
N(\mathcal{R}^{(nm)}\mid \mathcal{Q}^{(nm)})(\omega)
d\mathbb{P}\\
&=mh_{\Theta}(\mathcal{R}\mid \mathcal{Q}).
\end{align*}
Then
\begin{equation*}
mh(\Theta \mid \mathcal{Q})= \sup_{\mathcal{R}}h_{\Theta^m}(\mathcal{R}^{(m)}\mid \mathcal{Q}^{(m)})\leq h(\Theta^m\mid \mathcal{Q}^{(m)}),
\end{equation*}
where the supremum is  taken over all open random covers $\mathcal{R}$  of $\mathcal{E}$.

Since $\mathcal{R}\prec \mathcal{R}^{(m)},$
then by the inequality \eqref{n1},
\begin{equation*}
N(\mathcal{R}^{(m)}\mid \bigvee_{j=0}^{n-1}(\Theta^{mj})^{-1} \mathcal{Q}^{(m)})(\omega)\leq N(\bigvee_{j=0}^{n-1}(\Theta^{mj})^{-1} \mathcal{R}^{(m)})\mid \bigvee_{j=0}^{n-1}(\Theta^{mj})^{-1} \mathcal{Q}^{(m)})(\omega),
\end{equation*}
which implies that
\begin{equation*}
h_{\Theta^m}(\mathcal{R}\mid  \mathcal{Q}^{(m)})\leq h_{\Theta^m}(\mathcal{R}^{(m)}\mid \mathcal{Q}^{(m)})= mh_{\Theta}(\mathcal{R}\mid \mathcal{Q}).
\end{equation*}
Thus
$ h(\Theta^m\mid  \mathcal{Q}^{(m)})\leq m h(\Theta\mid \mathcal{Q}) $ and the proposition is proved.
\end{proof}

We could deduce from Proposition \ref{power} the following power rule for the relative   tail entropy.

\begin{proposition}
Let $T$ be a continuous bundle RDS on $\mathcal{E}$. Then for each $m\in\mathbb{N}$,
\begin{equation*}
h^*(\Theta^m)=mh^*(\Theta).
\end{equation*}
\end{proposition}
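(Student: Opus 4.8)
The plan is to deduce the power rule for $h^*(\Theta)$ from the single-cover identity already established in Proposition \ref{power}, namely $h(\Theta^m\mid\mathcal{Q}^{(m)})=m\,h(\Theta\mid\mathcal{Q})$. The only genuine issue is that $h^*(\Theta^m)=\inf_{\mathcal{Q}\in\mathfrak{P}(\mathcal{E})}h(\Theta^m\mid\mathcal{Q})$ is an infimum over \emph{all} random covers of $\mathcal{E}$, whereas Proposition \ref{power} only governs covers of the special shape $\mathcal{Q}^{(m)}$. Accordingly, I would first prove the auxiliary identity $h^*(\Theta^m)=\inf_{\mathcal{Q}\in\mathfrak{P}(\mathcal{E})}h(\Theta^m\mid\mathcal{Q}^{(m)})$, and then conclude by substituting Proposition \ref{power}, obtaining $h^*(\Theta^m)=\inf_{\mathcal{Q}}m\,h(\Theta\mid\mathcal{Q})=m\inf_{\mathcal{Q}}h(\Theta\mid\mathcal{Q})=m\,h^*(\Theta)$, since the positive constant $m$ pulls out of the infimum.

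For the auxiliary identity I would argue by two inequalities. Since $\{\mathcal{Q}^{(m)}:\mathcal{Q}\in\mathfrak{P}(\mathcal{E})\}$ is a subfamily of $\mathfrak{P}(\mathcal{E})$, taking the infimum over this smaller family can only increase it, giving $\inf_{\mathcal{Q}}h(\Theta^m\mid\mathcal{Q}^{(m)})\ge h^*(\Theta^m)$. For the reverse inequality I would use that $\mathcal{Q}^{(m)}=\bigvee_{i=0}^{m-1}(\Theta^i)^{-1}\mathcal{Q}$ always refines $\mathcal{Q}$, that is $\mathcal{Q}^{(m)}\succ\mathcal{Q}$, because each element of the join is contained in its $i=0$ factor. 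By the monotonicity of the relative tail entropy in its conditioning cover — the analogue of \eqref{n5} for the transformation $\Theta^m$, which gives $h(\Theta^m\mid\mathcal{Q}')\ge h(\Theta^m\mid\mathcal{Q}'')$ whenever $\mathcal{Q}''\succ\mathcal{Q}'$ — one gets $h(\Theta^m\mid\mathcal{Q}^{(m)})\le h(\Theta^m\mid\mathcal{Q})$ for every $\mathcal{Q}$. Hence $\inf_{\mathcal{Q}}h(\Theta^m\mid\mathcal{Q}^{(m)})\le h(\Theta^m\mid\mathcal{Q})$ for all $\mathcal{Q}$, and taking the infimum on the right yields $\inf_{\mathcal{Q}}h(\Theta^m\mid\mathcal{Q}^{(m)})\le h^*(\Theta^m)$. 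Combining the two inequalities gives the identity.

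The step needing the most care — and the one I would treat as the main obstacle — is justifying the monotonicity $h(\Theta^m\mid\mathcal{Q}')\ge h(\Theta^m\mid\mathcal{Q}'')$ for $\mathcal{Q}''\succ\mathcal{Q}'$, since \eqref{n5} is stated only for $\Theta$ itself. This is, however, purely a matter of re-running the derivation of \eqref{n5} with $\Theta$ replaced by $\Theta^m$: refinement is preserved under the preimages $(\Theta^{mi})^{-1}$ and under finite joins, so $\mathcal{Q}''\succ\mathcal{Q}'$ forces $\bigvee_{i=0}^{n-1}(\Theta^{mi})^{-1}\mathcal{Q}''\succ\bigvee_{i=0}^{n-1}(\Theta^{mi})^{-1}\mathcal{Q}'$; the combinatorial inequality \eqref{n1} then bounds the corresponding covering numbers $N(\cdot\mid\cdot)(\omega)$ fiberwise, and passing to the limit in $\tfrac1n\int\log N\,d\mathbb{P}$ gives the inequality at the level of $h_{\Theta^m}(\mathcal{R}\mid\cdot)$, whence at the level of $h(\Theta^m\mid\cdot)$ after taking the supremum over open random covers $\mathcal{R}\in\mathfrak{U}(\mathcal{E})$. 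Once this monotonicity and the trivial refinement $\mathcal{Q}^{(m)}\succ\mathcal{Q}$ are in place, the remainder is the bookkeeping with infima described above.
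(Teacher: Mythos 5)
Your proposal is correct and follows essentially the same route as the paper: both directions rest on Proposition \ref{power} together with the observation that $\{\mathcal{Q}^{(m)}\}$ is a subfamily of $\mathfrak{P}(\mathcal{E})$ (giving $h^*(\Theta^m)\leq m h^*(\Theta)$) and the refinement $\mathcal{Q}^{(m)}\succ\mathcal{Q}$ with the monotonicity of $h(\Theta^m\mid\cdot)$ (giving $h(\Theta^m\mid\mathcal{Q})\geq h(\Theta^m\mid\mathcal{Q}^{(m)})=m h(\Theta\mid\mathcal{Q})\geq m h^*(\Theta)$). Your extra care in re-deriving the analogue of \eqref{n5} for $\Theta^m$ is a point the paper passes over silently, but it is routine and does not change the argument.
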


\begin{proof}
By Proposition \ref{power},
\begin{equation*}
\inf_{\mathcal{Q}}h(\Theta^m\mid  \mathcal{Q}^{(m)})
=\inf_{\mathcal{Q}}m h(\Theta\mid \mathcal{Q})
= mh^*(\Theta),
\end{equation*}
where the infimum is taken over all random  covers  of $\mathcal{E}$.
Then $h^*(\Theta^m)\leq mh^*(\Theta).$

Since $\mathcal{Q}\prec \mathcal{Q}^{(m)}$, then
$$ h(\Theta^m\mid \mathcal{Q})\geq h(\Theta^m\mid \mathcal{Q}^{(m)})\geq mh^*(\Theta).$$
By taking infimum on the inequality over all  random  covers of  $\mathcal{E}$, one get
$h^*(\Theta^m)\geq mh^*(\Theta)$ and the equality holds.
\end{proof}

Let $\mu\in \mathcal{P}_{\mathbb{P}}(\mathcal{E})$. A partition $\mathcal{P}$ is called $\delta-$contains a partition $\mathcal{Q}$ if there exists a partition $\mathcal{R}\preceq \mathcal{P}$ such that $\inf\sum_i \mu(R^*_i\triangle Q^*_i)<\delta$, where the infimum is taken over all ordered partitions $\mathcal{R}^*, \mathcal{Q}^*$ obtained from $\mathcal{R}$ and $\mathcal{Q}$.

The following lemma essentially comes from the argument of Theorem 4.18 in \cite{Smorodinsky} and Lemma 4.15 in \cite{Walters}.

\begin{lemma}\label{lem415}
Given $\epsilon>0$ and $k\in\mathbb{N}$. There exists $\delta=\delta(\epsilon,k)>0$ such that if the measurable partition $\mathcal{P}$ $\delta-$contains $\mathcal{Q}$, where $\mathcal{Q}$ is a finite measurable partition with $k$ elements, then $H_{\mu}(\mathcal{Q}\mid \mathcal{P})<\epsilon.$
\end{lemma}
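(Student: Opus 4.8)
The plan is to bound the target conditional entropy by first replacing $\mathcal{P}$ with the approximating partition $\mathcal{R}$ and then exploiting the smallness of the set on which $\mathcal{R}$ and $\mathcal{Q}$ disagree. By hypothesis $\mathcal{P}$ $\delta$-contains $\mathcal{Q}$, so there is a partition $\mathcal{R}\preceq\mathcal{P}$ with $\inf\sum_i\mu(R_i^*\triangle Q_i^*)<\delta$, the infimum taken over matched enumerations. Since $\mathcal{R}$ is coarser than $\mathcal{P}$, conditioning on the finer partition only decreases conditional entropy, so $H_\mu(\mathcal{Q}\mid\mathcal{P})\le H_\mu(\mathcal{Q}\mid\mathcal{R})$. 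It therefore suffices to choose $\delta=\delta(\epsilon,k)$ so that $H_\mu(\mathcal{Q}\mid\mathcal{R})<\epsilon$ whenever $\mathcal{R}$ is $\delta$-close to $\mathcal{Q}$ in the sense above; the dependence on $\epsilon$ and $k$ alone (not on $\mathcal{P}$) is the content to be secured.

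First I would fix an ordering that nearly realizes the infimum, padding the shorter list with empty atoms, so as to obtain matched enumerations $\{Q_i^*\}$ and $\{R_i^*\}$ with $\sum_i\mu(R_i^*\triangle Q_i^*)<\delta$. I then introduce the mismatch set $E=\{x:\ x\in Q_i^*\ \text{and}\ x\in R_j^*\ \text{with}\ i\ne j\}$ of points placed in non-corresponding atoms; since $E\subseteq\bigcup_i(R_i^*\triangle Q_i^*)$, a union bound gives $\mu(E)<\delta$. Writing $\xi_E=\{E,E^c\}$ and combining monotonicity with additivity of conditional entropy, the estimate to carry out is
\[
H_\mu(\mathcal{Q}\mid\mathcal{R})\le H_\mu(\mathcal{Q}\vee\xi_E\mid\mathcal{R})=H_\mu(\xi_E\mid\mathcal{R})+H_\mu(\mathcal{Q}\mid\mathcal{R}\vee\xi_E).
\]

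The two terms are then controlled separately. For the first, $H_\mu(\xi_E\mid\mathcal{R})\le H_\mu(\xi_E)=\phi(\mu(E))+\phi(1-\mu(E))$ with $\phi(t)=-t\log t$. For the second, I would observe that on each $\mathcal{R}$-atom intersected with $E^c$ the partition $\mathcal{Q}$ is constant—a point outside $E$ lies in the $\mathcal{Q}$-atom matched to its $\mathcal{R}$-atom—so only atoms contained in $E$ contribute; as $\mathcal{Q}$ has $k$ elements each contribution is at most $\log k$, whence $H_\mu(\mathcal{Q}\mid\mathcal{R}\vee\xi_E)\le\mu(E)\log k$. Assembling the pieces yields $H_\mu(\mathcal{Q}\mid\mathcal{P})\le\phi(\mu(E))+\phi(1-\mu(E))+\mu(E)\log k$, and since $\phi$ is continuous with $\phi(0)=0$ the right-hand side tends to $0$ as $\mu(E)\to0$, so a suitable $\delta=\delta(\epsilon,k)$ forces it below $\epsilon$. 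The main obstacle is the bookkeeping around the possibly differing cardinalities of $\mathcal{R}$ and $\mathcal{Q}$: I must check that padding with empty atoms still produces a genuine mismatch set of measure below $\delta$ and that $\log k$, rather than $\log|\mathcal{R}|$, is the operative bound, which is precisely what keeps $\delta$ uniform in $\mathcal{P}$.
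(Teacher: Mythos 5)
Your proof is correct and follows essentially the same route as the paper: both pass from $\mathcal{P}$ to the matched coarsening $\mathcal{R}$, isolate the mismatch set of measure less than $\delta$, and arrive at the same bound $\phi(\delta)+\phi(1-\delta)+\delta\log k$. The only difference is bookkeeping --- the paper packages the mismatch into an auxiliary partition $\mathcal{S}$ with $S_0=\bigcup_i(R_i\cap Q_i)$ and uses $\mathcal{R}\vee\mathcal{Q}=\mathcal{R}\vee\mathcal{S}$, while you split off $\xi_E=\{E,E^c\}$ and apply the chain rule; these are the same estimate in different clothing.
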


\begin{proof}
Let $\epsilon>0$. Choose $0<\delta<\frac{1}{e}$ such that $-\delta\log \delta+(1-\delta)\log(1-\delta)+\delta \log k<\epsilon$.
Suppose that $\mathcal{R}\preceq \mathcal{P}$ is the partition with $\sum_{i}\mu(R_i\triangle Q_i)<\delta$. One can construct a partition $\mathcal{S}$ by $S_0=\bigcup_i(R_i\cap Q_i)$ and $S_i=Q_i\setminus S_0$.
Since $\mathcal{R}\vee\mathcal{Q}=\mathcal{R}\vee\mathcal{S}$, and
\begin{equation*}
H_{\mu}(\mathcal{R})+H_{\mu}(\mathcal{Q}\mid \mathcal{R})=H_{\mu}(\mathcal{R}\vee \mathcal{Q})=H_{\mu}(\mathcal{R}\vee \mathcal{S})\leq H_{\mu}(\mathcal{S})+H_{\mu}(\mathcal{R}).
\end{equation*}
Then
\begin{equation*}
H_{\mu}(\mathcal{Q}\mid \mathcal{R})\leq H_{\mu}(\mathcal{S})\leq -\delta\log \delta+(1-\delta)\log(1-\delta)+\delta \log k<\epsilon,
\end{equation*}
and $H_{\mu}(\mathcal{Q}\mid \mathcal{P})<H_{\mu}(\mathcal{Q}\mid \mathcal{R})<\epsilon$.
\end{proof}

\begin{remark}
We discuss  here the conditional entropy  instead of the usual measure-theoretic  entropy  in \cite{Smorodinsky}.
 The  result does not require that the two partitions have the same cardinality, which is a little different from Lemma 4.15 in \cite{Walters}.
 \end{remark}

\begin{lemma}\label{Fact666}
Let $\mu^{(i)} \in  \mathcal{P}_{\mathbb{P}}(\mathcal{E}), i\in \mathbb{N}$ and $\delta=\delta(\omega)$ be a positive random variable on $\Omega$. There exists a finite measurable partition $\mathcal{R}=\{R \}$ of $\mathcal{E}$ such that $\text{diam}\, R (\omega) \leq \delta(\omega)$ $\mathbb{P}-$a.s. and $\mu^{(i)}(\partial R  )=0$  for each $i\in \mathbb{N}, R\in \mathcal{R} $ in the sense of $\mu^{(i)}(\partial  R )=\int\mu_{\omega}^{(i)}(\partial R (\omega))d {\mathbb{P}(\omega)}$, where $\partial$ denotes the boundary.

\end{lemma}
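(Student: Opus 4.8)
The plan is to reduce the countable family of measures to a single one, decompose $\Omega$ according to the size of $\delta$, and on each piece build the partition from metric balls of a fixed radius chosen---via a Fubini argument---so that all the relevant spheres are null. First I would replace $\{\mu^{(i)}\}_{i}$ by the single measure $\mu=\sum_{i\ge 1}2^{-i}\mu^{(i)}$. As each $\mu^{(i)}$ has marginal $\mathbb P$ and is carried by $\mathcal E$, so is $\mu$, and by linearity of disintegration (cf.\ \cite[Section 10.2]{Dudley}) one has $\mu_\omega=\sum_i 2^{-i}\mu^{(i)}_\omega$. Since all terms are nonnegative, $\mu(\partial R)=0$ forces $\mu^{(i)}(\partial R)=0$ for every $i$; hence it is enough to produce one partition whose fibrewise boundaries are $\mu$-null.

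Next, fix a countable dense set $\{x_k\}_{k\ge 1}\subset X$ and split $\Omega$, up to a null set, into the measurable slabs $\Omega_0=\{\delta>1\}$ and $\Omega_m=\{\,1/(m+1)<\delta\le 1/m\,\}$ for $m\ge 1$. On $\Omega_m$ I will use balls of a single deterministic radius $\rho_m\in I_m:=(1/(4(m+1)),\,1/(2(m+1)))$ (with $I_0=(1/4,1/2)$), so that $2\rho_m<\inf_{\Omega_m}\delta$. The radius must be chosen so that the spheres about all centers are simultaneously $\mu_\omega$-null for a.e.\ $\omega$. Using that $s\mapsto\mu_\omega(\{d(x_k,\cdot)\le s\})$ is nondecreasing in $s$ and measurable in $\omega$, hence jointly measurable, the sphere mass $s\mapsto\mu_\omega(\{d(x_k,\cdot)=s\})$ is jointly measurable, and Tonelli gives
\[
\sum_{k}2^{-k}\int_{\Omega}\int_{I_m}\mu_\omega(\{d(x_k,\cdot)=s\})\,ds\,d\mathbb P=0,
\]
since for each $y$ the set $\{s:d(x_k,y)=s\}$ is a single point. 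Therefore the nonnegative $s$-integrand vanishes for Lebesgue-a.e.\ $s\in I_m$, and I fix such a $\rho_m$; then $\mu_\omega(\{d(x_k,\cdot)=\rho_m\})=0$ for all $k$ and $\mathbb P$-a.e.\ $\omega$.

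Finally I assemble the partition. Letting $m(\omega)$ denote the slab index of $\omega$, I set $\tilde B_k=\{(\omega,y)\in\mathcal E: d(x_k,y)<\rho_{m(\omega)}\}$, which lies in $\mathcal F\times\mathcal B$ because $\omega\mapsto\rho_{m(\omega)}$ is a measurable step function, and define $R_1=\tilde B_1$, $R_k=\tilde B_k\setminus\bigcup_{j<k}\tilde B_j$. As $\{x_k\}$ is dense and $\rho_{m(\omega)}>0$, the balls $B(x_k,\rho_{m(\omega)})$ cover $X\supseteq\mathcal E_\omega$, so $\{R_k\}$ is a measurable partition of $\mathcal E$ whose fibre $\mathcal R(\omega)$ is finite by compactness of $\mathcal E_\omega$. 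Each fibre obeys $R_k(\omega)\subseteq B(x_k,\rho_{m(\omega)})$, whence $\diam R_k(\omega)\le 2\rho_{m(\omega)}<\delta(\omega)$; moreover $\partial R_k(\omega)\subseteq\bigcup_{j\le k}\{d(x_j,\cdot)=\rho_{m(\omega)}\}$, a $\mu_\omega$-null set, so $\mu(\partial R_k)=\int\mu_\omega(\partial R_k(\omega))\,d\mathbb P=0$ and hence $\mu^{(i)}(\partial R_k)=0$ for every $i$.

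I expect the main obstacle to be the measurability bookkeeping rather than the geometry: one must check that $(\omega,s)\mapsto\mu_\omega(\{d(x_k,\cdot)=s\})$ is jointly measurable so that Tonelli applies, and that $\tilde B_k$ and $R_k$ are genuinely $\mathcal F\times\mathcal B$-measurable. The device that neutralizes the random diameter bound is the choice of a single \emph{deterministic} radius $\rho_m$ per scale, which avoids any measurable selection of radii; its only cost is that $\mathcal R$ is a countable family, though each fibre partition $\mathcal R(\omega)$ is finite---exactly what the diameter condition requires.
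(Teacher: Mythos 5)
Your argument is essentially sound, and it takes a genuinely different route from the paper's. The paper embeds $\Omega$ into $[0,1]$ and works with metric balls in the product space $[0,1]\times X$: for each center $(t,x)$ it selects a radius $\gamma(t)<\delta(t)/2$ whose sphere is null for every $\mu^{(i)}$ (using that distinct radii give disjoint spheres, so only countably many can be charged), and then invokes compactness of $[0,1]\times X$ to extract a finite subcover, which it disjointifies. You instead keep $\Omega$ and $X$ separate: you aggregate the measures into $\mu=\sum_i 2^{-i}\mu^{(i)}$, slice $\Omega$ according to the size of $\delta$, and choose a single deterministic radius per slab by a Tonelli argument in the radius variable. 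Your route sidesteps the measurable-selection issues hidden in the paper's pointwise choice of $\gamma(t)$, and it gets the diameter bound right fiber by fiber: $\diam R_k(\omega)\le 2\rho_{m(\omega)}<\delta(\omega)$ for the actual $\omega$ in question, whereas the paper's balls $B((t,x),\gamma(t))$ are only guaranteed to have $t'$-sections of diameter less than $\delta(t)$ at the center $t$, not less than $\delta(t')$ at every $t'$ they meet.

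The one substantive discrepancy is cardinality: the lemma asserts a \emph{finite} partition of $\mathcal{E}$, and your construction yields a countable one (finite on each fiber, but with no uniform bound on the number of nonempty pieces when $\delta$ is not essentially bounded below). This is not something you can repair within your scheme, and in fact it points at the statement itself: if $\mathcal{E}_\omega=X=[0,1]$ and $\delta(\omega)<1/q$ on a set of positive measure, no partition into $q$ sets can have all fiber diameters at most $\delta(\omega)$ there, since $q$ sets of diameter at most $\delta(\omega)$ cover Lebesgue measure at most $q\,\delta(\omega)<1$. So your weakening is forced for a general positive random variable $\delta$. You should flag this explicitly rather than treat it as a cosmetic cost, because the paper uses finiteness downstream: in the proof of Proposition \ref{prop3} the error term $3m\log q$ takes $q$ to be the cardinality of the partition produced by this lemma. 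With your countable partition one must either truncate (lump all but finitely many $R_k$ into a single remainder, giving up the diameter bound on a set of small measure) or verify that finiteness of $H_\mu(\mathcal{R}\mid\cdot)$ suffices at each point of use.
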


\begin{proof}

Since $(\Omega,\mathcal{F},\mathbb{P})$ is a Lebesgue space, it can be viewed as a Borel subset of the unit interval $[0,1]$, and $\mu^{(i)}, i\in \mathbb{N}$ are also probability measures on the compact space $[0,1]\times X$ with the marginal $\mathbb{P}$ on $[0,1]$.

Fix a point $(t,x)\in [0,1]\times X$ and $\mu^{(i)}\in \mathcal{P}_{\mathbb{P}}(\mathcal{E})$. For each nonrandom $\epsilon >0$,
$\partial B((t,x),\epsilon)\subset \overline{B}((t,x),\epsilon)\setminus \text{int} B((t,x),\epsilon)$, where $B((t,x),\epsilon)$ is the open ball of center at $(t,x)$ and radius $\epsilon$ with the product metric $d=(d_1^2+d_2^2)^{\frac{1}{2}}$ on $[0,1]\times X$ and $\overline{B}$ denotes the closure of $B$. If $\epsilon_1\neq \epsilon_2$,
$\big(\overline{B}((t,x),\epsilon_1)\setminus \text{int} B((t,x),\epsilon_1)\big)\cap \big(\overline{B}((t,x),\epsilon_2)\setminus \text{int} B((t,x),\epsilon_2)\big)=\emptyset$. Then  there exists only at most countably many of $\epsilon_j $ such that $\mu^{(i)}(\overline{B}((t,x),\epsilon_j)\setminus \text{int} B((t,x),\epsilon_j))>0, j=1, 2, \dots$. It follows that for each $(t,x)\in [0,1]\times X$ one could choose some positive real number $\gamma(t)<\frac{\delta(t)}{2}$ such that for all $i\in \mathbb{N}$,
$\mu^{(i)}(\overline{B}((t,x),\gamma(t))\setminus \text{int} B((t,x),\gamma(t)))=0$ and then $\mu^{(i)}(\partial B((t,x),\gamma(t)))=0$.

By the compactness of $[0,1]\times X$, there exists an open cover of $[0,1]\times X$ by finite many open ball $B_1, \dots, B_k$ with the diameter of the $t-$section of $B_j$ $\text{diam}\, B_j(t)<\delta(t)$ and
$\mu^{(i)}(\partial B_j)=\int \mu^{(i)}_{\omega}\partial B_j(\omega)d\mathbb{P}=0$ for each $1\leq j\leq k$ and $i\in \mathbb{N}$. Then $B_1, B_2\setminus B_1, \dots, B_k\setminus \cup_{j=1}^{k-1}B_j$ forms a measurable partition of $[0,1]\times X$. Let $A_1=B_1$, and $A_n=B_n\setminus \cup_{j=1}^{n-1}B_j$ for each $1\leq n\leq k$ and denote by $\xi=\{A_1,\dots, A_k\}$. Then $\xi$ is a measurable partition of $[0,1]\times X$ and for each $1\leq j\leq k$, $\text{diam}\, A_j(t)<\delta (t)$ for each $t\in [0,1]$. Since $\partial A_n\subset \cup_{j=1}^n\partial B_j$, then $\mu^{(i)}(\partial A_j)=0$, for each $1\leq j\leq k$ and $i\in \mathbb{N}$.

Let $R_j=A_j\cap \mathcal{E}$ and $R_j(\omega)=\{x:(\omega,x)\in R_j\}$. Notice that the marginal $\mathbb{P}$ is supported on $\Omega$, then $\mu^{(i)}(\partial R_j)=0$ and  $\mathcal{R}=\{R_1,\dots,R_k\}$ is the measurable partition as desired.
\end{proof}

Let $T$ and $S$ be two continuous bundle RDSs on $\mathcal{E}$ and $\mathcal{G}$, respectively.
Let $T$ be a factor of $S$ via the factor transformation $\pi$.
The transformation $\pi$  induce a map, which is again denoted by $\pi$, from  $\mathcal{P}_{\mathbb{P}}(\mathcal{G})$ to $\mathcal{P}_{\mathbb{P}}(\mathcal{E})$ by $\pi\mu=\mu\pi^{-1}$. This induced map $\pi$ transports every measure on $ \mathcal{G} $ to a measure $\pi\mu $  in $\mathcal{P}_{\mathbb{P}}(\mathcal{E})$. The following proposition is classical in the deterministic dynamical system \cite{Denker1976}.

\begin{proposition}\label{propm1}
The induced map $\pi$ is a continuous affine map from $\mathcal{P}_{\mathbb{P}}(\mathcal{G}) $ onto $\mathcal{P}_{\mathbb{P}}(\mathcal{E})$.
\end{proposition}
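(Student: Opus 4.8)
The plan is to verify the three asserted properties in increasing order of difficulty, after first confirming that $\pi$ genuinely maps $\mathcal{P}_{\mathbb{P}}(\mathcal{G})$ into $\mathcal{P}_{\mathbb{P}}(\mathcal{E})$. Since $\pi(\omega,y)=(\omega,\pi_{\omega}y)$ acts as the identity on the $\Omega$-coordinate, one has $\pi^{-1}\big((F\times X)\cap\mathcal{E}\big)=(F\times Y)\cap\mathcal{G}$ for every $F\in\mathcal{F}$, so $\pi\mu$ retains the marginal $\mathbb{P}$; and $\pi\mu(\mathcal{E})=\mu(\pi^{-1}\mathcal{E})=\mu(\mathcal{G})=1$. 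Affineness is then immediate from the linearity of the push-forward: for $t\in[0,1]$, $(t\mu_1+(1-t)\mu_2)\pi^{-1}=t\,\mu_1\pi^{-1}+(1-t)\,\mu_2\pi^{-1}$, and each summand stays in $\mathcal{P}_{\mathbb{P}}(\mathcal{E})$ by the previous remark.

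For continuity I would work with the weak topology on $\mathcal{P}_{\mathbb{P}}$ determined by duality with the test functions $f$ on $\Omega\times X$ that are continuous on each fiber and satisfy $\int\sup_{x}|f(\omega,x)|\,d\mathbb{P}<\infty$. Given such an $f$ on $\mathcal{E}$, the pullback $f\circ\pi$, namely $(\omega,y)\mapsto f(\omega,\pi_{\omega}y)$, is again an admissible test function on $\mathcal{G}$: it is measurable because $(\omega,y)\mapsto\pi_{\omega}y$ is measurable, it is continuous on each fiber because $\pi_{\omega}$ is continuous, and $\sup_{y}|f(\omega,\pi_{\omega}y)|\le\sup_{x}|f(\omega,x)|$ preserves the integrability. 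Since $\int f\,d(\pi\mu)=\int f\circ\pi\,d\mu$ by the change-of-variables formula for push-forwards, $\mu_{n}\to\mu$ forces $\int f\,d(\pi\mu_{n})\to\int f\,d(\pi\mu)$ for every test $f$, which is precisely $\pi\mu_{n}\to\pi\mu$.

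Surjectivity is the substantive step, and I expect it to be the main obstacle. Given $\nu\in\mathcal{P}_{\mathbb{P}}(\mathcal{E})$, I would produce a preimage by constructing a measurable section of $\pi$. Consider the multifunction $(\omega,x)\mapsto\pi_{\omega}^{-1}(\{x\})\subset\mathcal{G}_{\omega}$; its values are nonempty because each $\pi_{\omega}$ is surjective and compact because they are closed subsets of the compact fibers $\mathcal{G}_{\omega}$, and its graph $\{(\omega,x,y):(\omega,y)\in\mathcal{G},\ \pi_{\omega}y=x\}$ is measurable since $\mathcal{G}$ is measurable and $(\omega,y)\mapsto\pi_{\omega}y$ is measurable. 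A measurable selection theorem of the type already invoked in the paper (see \cite{Castaing}) then yields a measurable map $s$ with $s(\omega,x)\in\pi_{\omega}^{-1}(\{x\})$, that is $\pi_{\omega}s(\omega,x)=x$; setting $\sigma(\omega,x)=(\omega,s(\omega,x))$ gives a measurable right inverse $\pi\circ\sigma=\mathrm{id}_{\mathcal{E}}$ that fixes the $\Omega$-coordinate. Taking $\mu=\sigma_{*}\nu=\nu\circ\sigma^{-1}$, the fact that $\sigma$ preserves the $\Omega$-coordinate and maps $\mathcal{E}$ into $\mathcal{G}$ gives $\mu((F\times Y)\cap\mathcal{G})=\nu((F\times X)\cap\mathcal{E})=\mathbb{P}(F)$ and $\mu(\mathcal{G})=\nu(\mathcal{E})=1$, so $\mu\in\mathcal{P}_{\mathbb{P}}(\mathcal{G})$, while $\pi\mu=(\pi\circ\sigma)_{*}\nu=\nu$ shows $\pi$ is onto. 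The delicate points to watch are the measurability of the preimage multifunction and the verification that the selection theorem's hypotheses—compact values and measurable graph in the fibered Polish setting—are met; everything else reduces to the elementary bookkeeping above.
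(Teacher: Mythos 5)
Your proof is correct, and on well-definedness, affineness and continuity it runs parallel to the paper's (the paper likewise tests narrow convergence against random continuous functions pulled back along $\pi$; your extra check that $\pi\mu$ keeps the marginal $\mathbb{P}$ is a welcome detail the paper leaves implicit). The surjectivity step, however, follows a genuinely different route. The paper builds no section: it uses compactness and convexity of $\mathcal{P}_{\mathbb{P}}(\mathcal{G})$, notes that the extreme points of these simplices for the identity transformation are the ``point measures'' (measures disintegrating as $\delta_{x(\omega)}$ over $\mathbb{P}$), which lift through $\pi$ because each $\pi_{\omega}$ is onto, invokes Krein--Milman to approximate an arbitrary $\nu\in\mathcal{P}_{\mathbb{P}}(\mathcal{E})$ by convex combinations of such measures, and then extracts a limit of the lifted approximants in the compact space $\mathcal{P}_{\mathbb{P}}(\mathcal{G})$, whose image is $\nu$ by the continuity already established (equivalently: the image of $\pi$ is compact, convex, and contains a dense subset of $\mathcal{P}_{\mathbb{P}}(\mathcal{E})$). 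Your argument via a measurable selection of the multifunction $(\omega,x)\mapsto\pi_{\omega}^{-1}(\{x\})$ is more direct and yields an explicit preimage $\sigma_{*}\nu$ without appealing to compactness of $\mathcal{P}_{\mathbb{P}}(\mathcal{G})$ or to Krein--Milman; the price is the selection theorem, whose hypotheses you correctly flag as the delicate point --- nonempty compact values, measurable graph, and completeness of the underlying measure space, which the paper does assume for $(\Omega,\mathcal{F},\mathbb{P})$, so the results of Castaing--Valadier apply after completing with respect to $\nu$. It is worth observing that the paper's route is not truly selection-free either: lifting a random point measure $\delta_{x(\omega)}\,d\mathbb{P}(\omega)$ already requires a measurable choice of $y(\omega)\in\pi_{\omega}^{-1}(x(\omega))$, so a selection argument is hidden in the ``obvious'' lifting of extreme points. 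Both proofs are sound; yours trades soft functional analysis for measurable selection and is arguably the cleaner one in the bundle setting.
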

\begin{proof}
If $\mu_n \rightarrow \mu\in \mathcal{P}_{\mathbb{P}}(\mathcal{G})$,
then $\int f  d\mu_n \rightarrow \int f d\mu$ for all $f \in \mathcal{C}(\mathcal{G})$, where $\mathcal{C}(\mathcal{G})$ is the set of random continuous functions on $\mathcal{G}$ (see \cite{Crauel}), and therefore $\int g\circ\pi d\mu_n\rightarrow \int g\circ\pi d\mu$ for all $g\in \mathcal{C}(\mathcal{G})$ by the measurability of the factor transformation $\pi$. This implies $\pi\mu_n\rightarrow \pi\mu$.

It is clear that $\pi(\alpha\mu+(1-\alpha)\nu)=\alpha\pi\mu+(1-\alpha)\pi\nu$ for all $\mu,\nu\in \mathcal{P}_{\mathbb{P}}(\mathcal{G})$ and $0\leq \alpha\leq 1$.

Since $\mathcal{P}_{\mathbb{P}}(\mathcal{G})$ is a compact and convex subset of $\mathcal{P}_{\mathbb{P}}(\Omega\times Y)$ (see \cite[Section 1.5]{Arnold}), and the ergodic measures on $\mathcal{G}$ are just the point measures if we take the identity transformation on $\mathcal{G}$, {\it i.e.}, $Id(\omega,x)=(\omega,x)$. Then by the Krein-Millman theorem (see \cite[P440]{Dunford}), the convex combinations of point measures are dense in $\mathcal{P}_{\mathbb{P}}(\mathcal{G})$. it follows that $\pi:\mathcal{G}\rightarrow \mathcal{E} $ is onto that $\pi:\mathcal{P}_{\mathbb{P}}(\mathcal{G})\rightarrow \mathcal{P}_{\mathbb{P}}(\mathcal{E})$ is onto.
\end{proof}

It is not hard to verify that the induced map $\pi$ send a $\Lambda$-invariant measure in $\mathcal{I}_{\mathbb{P}}(\mathcal{G})$ to a $\Theta$-invariant measure in $\mathcal{I}_{\mathbb{P}}(\mathcal{E})$. The following result shows that this map is also surjective.

\begin{proposition}\label{prop33}
Let $\mu\in \mathcal{I}_{\mathbb{P}}(\mathcal{E})$. There exists $m\in \mathcal{I}_{\mathbb{P}}(\mathcal{G})$ with $\pi m=\mu$
\end{proposition}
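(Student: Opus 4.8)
The plan is to realize $m$ as a Krylov--Bogolyubov (Cesàro) limit built from an arbitrary preimage of $\mu$. By Proposition \ref{propm1} the induced map $\pi:\mathcal{P}_{\mathbb{P}}(\mathcal{G})\to\mathcal{P}_{\mathbb{P}}(\mathcal{E})$ is continuous, affine and onto, so I would first choose some $\nu\in\mathcal{P}_{\mathbb{P}}(\mathcal{G})$ with $\pi\nu=\mu$ (not assumed invariant). The factor relation $\pi_{\vartheta\omega}S_{\omega}=T_{\omega}\pi_{\omega}$ translates into the equivariance $\pi\circ\Lambda=\Theta\circ\pi$, which is the structural identity driving the whole argument.

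Next I would form the averages $m_n=\frac{1}{n}\sum_{k=0}^{n-1}\Lambda^{k}\nu$, where $\Lambda^{k}\nu=\nu\circ\Lambda^{-k}$. Each $\Lambda^{k}\nu$ still lies in $\mathcal{P}_{\mathbb{P}}(\mathcal{G})$: it is supported on $\mathcal{G}$ because $\Lambda(\mathcal{G})\subset\mathcal{G}$, and it keeps the marginal $\mathbb{P}$ on $\Omega$ because $\Lambda$ covers $\vartheta$ and $\vartheta$ preserves $\mathbb{P}$, since $(\Lambda^{k}\nu)(F\times Y)=\nu(\vartheta^{-k}F\times Y)=\mathbb{P}(\vartheta^{-k}F)=\mathbb{P}(F)$ for $F\in\mathcal{F}$. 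Hence $m_n\in\mathcal{P}_{\mathbb{P}}(\mathcal{G})$, which is weak$^{*}$ compact (as recorded in the proof of Proposition \ref{propm1}, following \cite[Section 1.5]{Arnold}), so I would pass to a subsequence $m_{n_j}\to m\in\mathcal{P}_{\mathbb{P}}(\mathcal{G})$. The $\Lambda$-invariance of $m$ is the standard telescoping estimate: for a test function $f\in\mathcal{C}(\mathcal{G})$,
\begin{equation*}
\int f\circ\Lambda\,dm_n-\int f\,dm_n
=\frac{1}{n}\Big(\int f\circ\Lambda^{n}\,d\nu-\int f\,d\nu\Big),
\end{equation*}
whose absolute value is at most $\frac{2}{n}\int\sup_{y}|f(\omega,y)|\,d\mathbb{P}(\omega)\to 0$, using once more that $\vartheta$ preserves $\mathbb{P}$ to control $\int f\circ\Lambda^{n}\,d\nu$. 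Passing to the limit along $n_j$ gives $\int f\circ\Lambda\,dm=\int f\,dm$ for all such $f$, so $m\in\mathcal{I}_{\mathbb{P}}(\mathcal{G})$.

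It remains to identify the image. Using $\pi\circ\Lambda=\Theta\circ\pi$ one has $\pi(\Lambda^{k}\nu)=\Theta^{k}(\pi\nu)=\Theta^{k}\mu$, and since $\mu$ is $\Theta$-invariant this equals $\mu$; by the affineness of $\pi$ it follows that $\pi m_n=\frac{1}{n}\sum_{k=0}^{n-1}\Theta^{k}\mu=\mu$ for every $n$, whence the continuity of $\pi$ yields $\pi m=\lim_{j}\pi m_{n_j}=\mu$. I expect the only genuinely delicate point to be checking that the averaging is legitimate in the random (relative) weak$^{*}$ topology: one must verify that $f\circ\Lambda$ is again an admissible test function, which holds because $(\omega,y)\mapsto S_{\omega}y$ is measurable and fiberwise continuous, and that the integrability $\int\sup_{y}|f(\omega,y)|\,d\mathbb{P}<\infty$ needed for the bound is built into the definition of $\mathcal{C}(\mathcal{G})$. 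Everything else—preservation of the marginal, compactness, and the image computation—is routine once the equivariance $\pi\circ\Lambda=\Theta\circ\pi$ and the $\Theta$-invariance of $\mu$ are in hand.
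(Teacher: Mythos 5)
Your proposal is correct and follows essentially the same route as the paper: take a preimage $\nu$ of $\mu$ via Proposition \ref{propm1}, form the Ces\`aro averages $\frac{1}{n}\sum_{k=0}^{n-1}\Lambda^k\nu$, extract a limit point $m$ (the paper cites Theorem 1.5.8 of \cite{Arnold} for its invariance, where you spell out the telescoping estimate), and use the equivariance $\pi\Lambda=\Theta\pi$ together with the $\Theta$-invariance of $\mu$ and the continuity and affinity of the induced map to conclude $\pi m=\mu$.
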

\begin{proof}
By Proposition \ref{propm1}, there exists a $\nu\in \mathcal{P}_{\mathbb{P}}(\mathcal{G})$ such that $\pi\nu=\mu$. Since $\Theta\mu=\mu$ and $\pi\Lambda=\Theta \pi$, one has $\pi(\Lambda\nu)=\mu$, and more generally, $\pi(\Lambda^n\nu)=mu$. By the affinity of $\pi$, $\pi(\frac{1}{n}\sum_{i=0}^{n-1}\Lambda^i\nu)=\mu$. Denote by  $\nu^{(n)}=\frac{1}{n}\sum_{i=0}^{n-1}\Lambda^i\nu$ and let $m$ be one limit point of the sequence $\nu^{(n)}$, It follows from Theorem 1.5.8 in \cite{Arnold} that $m\in \mathcal{I}_{\mathbb{P}}(\mathcal{G})$. Since $\pi$ is continuous, then $\pi m=\mu$.
\end{proof}

We need the following lemma (see \cite[Section 14.3]{Glasner}) which follows from the martingale convergence theorem.
\begin{lemma}\label{lem222}
Let $\mu\in \mathcal{P}_{\mathbb{P}}(\mathcal{G})$, $\mathcal{R}=\{R_1,\dots,R_k\}$ be a finite  measurable partition of $\mathcal{G}$ with $H_{\mu}(\mathcal{R})<\infty$ and $\mathcal{A}_1\prec\cdots\prec\mathcal{A}_n\prec\cdots$ be an increasing sequence of sub-$\sigma$-algebra of $\mathcal{A}$ with $\bigvee_{n=1}^{\infty}=\mathcal{A}$. Then
\begin{equation*}
H_{\mu}(\mathcal{R}\mid\mathcal{A}_{\mathcal{G}})
=\lim_{n\rightarrow\infty}H_{\mu}(\mathcal{R}\mid\mathcal{A}_n)
=\inf_n H_{\mu}(\mathcal{R}\mid\mathcal{A}_n).
\end{equation*}
\end{lemma}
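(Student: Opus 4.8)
The plan is to reduce everything to the elementary function $\phi(t)=-t\log t$ on $[0,1]$, which is continuous, concave, and bounded (with $\phi(0)=0$ and $\phi(t)\le e^{-1}$), and then to combine the martingale convergence theorem with the bounded convergence theorem. Writing $f_i^{(n)}=E(1_{R_i}\mid\mathcal{A}_n)$ and $f_i=E(1_{R_i}\mid\mathcal{A}_{\mathcal{G}})$, the definition of conditional entropy reads
\begin{equation*}
H_{\mu}(\mathcal{R}\mid\mathcal{A}_n)=\int\sum_{i=1}^k\phi(f_i^{(n)})\,d\mu,\qquad
H_{\mu}(\mathcal{R}\mid\mathcal{A}_{\mathcal{G}})=\int\sum_{i=1}^k\phi(f_i)\,d\mu.
\end{equation*}
Since $\mathcal{R}$ is \emph{finite}, each sum has $k$ terms and each summand is bounded by $e^{-1}$, so all the quantities involved are finite; in particular the hypothesis $H_{\mu}(\mathcal{R})<\infty$ is automatic here and causes no integrability difficulty.

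First I would establish the monotonicity $H_{\mu}(\mathcal{R}\mid\mathcal{A}_{n+1})\le H_{\mu}(\mathcal{R}\mid\mathcal{A}_n)$, which immediately yields $\lim_n=\inf_n$. Using $\mathcal{A}_n\prec\mathcal{A}_{n+1}$ together with the tower property $f_i^{(n)}=E(f_i^{(n+1)}\mid\mathcal{A}_n)$, the conditional Jensen inequality for the concave function $\phi$ gives, pointwise, $\phi\big(E(f_i^{(n+1)}\mid\mathcal{A}_n)\big)\ge E\big(\phi(f_i^{(n+1)})\mid\mathcal{A}_n\big)$. Integrating against $\mu$ (so the outer conditional expectation disappears) and summing over $i$ produces the claimed inequality, so the sequence $H_{\mu}(\mathcal{R}\mid\mathcal{A}_n)$ is non-increasing and its limit equals its infimum.

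Next I would invoke the martingale convergence theorem. Because $\mathcal{A}_1\prec\mathcal{A}_2\prec\cdots$ increases to $\bigvee_{n}\mathcal{A}_n=\mathcal{A}_{\mathcal{G}}$, the conditional expectations converge $f_i^{(n)}\to f_i$ both $\mu$-a.e. and in $L^1(\mu)$ for each $i$. Since $\phi$ is continuous on $[0,1]$ and $0\le f_i^{(n)}\le 1$, we obtain $\phi(f_i^{(n)})\to\phi(f_i)$ $\mu$-a.e., and since $0\le\phi(f_i^{(n)})\le e^{-1}$ uniformly, the bounded convergence theorem lets us pass to the limit inside each of the finitely many integrals. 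Summing over $1\le i\le k$ gives $H_{\mu}(\mathcal{R}\mid\mathcal{A}_n)\to H_{\mu}(\mathcal{R}\mid\mathcal{A}_{\mathcal{G}})$, which together with the previous paragraph completes the proof.

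The only genuinely substantive point, such as it is, is the bookkeeping that identifies the martingale limit with the correct conditioning $\sigma$-algebra: one must check that the increasing filtration $\{\mathcal{A}_n\}$ has join exactly $\mathcal{A}_{\mathcal{G}}$, so that $\lim_n E(1_{R_i}\mid\mathcal{A}_n)=E(1_{R_i}\mid\mathcal{A}_{\mathcal{G}})$ $\mu$-a.e. (equivalently, that $\pi^{-1}$ commutes with the countable join, so the pulled-back filtration increases to $\mathcal{A}_{\mathcal{G}}=\pi^{-1}\mathcal{A}$). Everything else is routine: the finiteness of $\mathcal{R}$ and the uniform bound on $\phi$ make the bounded convergence theorem applicable directly, so no appeal to a sharper $L\log L$-type domination is needed.
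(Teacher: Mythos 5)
Your proof is correct and follows exactly the route the paper itself indicates: the paper offers no written proof of Lemma \ref{lem222}, merely citing \cite[Section 14.3]{Glasner} and stating that the lemma ``follows from the martingale convergence theorem,'' which is precisely the combination of L\'evy's upward theorem, the boundedness and concavity of $t\mapsto -t\log t$, and bounded convergence that you spell out. Your explicit remarks on the monotonicity via conditional Jensen and on identifying $\bigvee_n\pi^{-1}\mathcal{A}_n$ with $\mathcal{A}_{\mathcal{G}}$ supply the details the citation leaves implicit, and nothing is missing.
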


The following result is a relative version of Lemma 6.6.7 in \cite{Down2011}. Similar results for random transformations could be found in \cite{LedWal,Kifer2001}

\begin{lemma}\label{lem22}
Let $(\mathcal{E},\Theta)$ be a factor of $(\mathcal{G},\Lambda)$ via a factor transformation $\pi$, $m\in \mathcal{P}_{\mathbb{P}}(\mathcal{G})$ and $\mathcal{R}=\{R\}$ be a finite measurable partition of $\mathcal{G}$ with $m(\partial R)=0$, where $\partial$ denotes the boundary and $m(\partial R)=\int m_{\omega}(\partial R(\omega))d\mathbb{P}$.
Then
\begin{enumerate}
\item[(i)]
$m$ is a supper semi-continuity point of the function $\mu\rightarrow H_{\mu}(\mathcal{R}\mid \mathcal{A}_{\mathcal{G}})$ defined on $\mathcal{P}_{\mathbb{P}}(\mathcal{G})$, {\it i.e.},
$$
\limsup_{\mu\rightarrow m}H_{\mu}(\mathcal{R}\mid \mathcal{A}_{\mathcal{G}})\leq H_m(\mathcal{R}\mid \mathcal{A}_{\mathcal{G}}).
$$

\item[(ii)]
If $m\in \mathcal{I}_{\mathbb{P}}(\mathcal{G})$, the function $\mu\rightarrow h_{\mu}(\mathcal{R}\mid \mathcal{A}_{\mathcal{G}})$ defined on $\mathcal{I}_{\mathbb{P}}(\mathcal{G})$ is upper semi-continuous at $m$, {\it i.e.},
$$
\limsup_{\mu\rightarrow m}h_{\mu}(\mathcal{R}\mid \mathcal{A}_{\mathcal{G}})\leq h_m(\mathcal{R}\mid \mathcal{A}_{\mathcal{G}}).
$$

\end{enumerate}
\end{lemma}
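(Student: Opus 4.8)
The plan is to reduce both parts to the continuity of finite-partition entropies at $m$, combined with the martingale approximation of Lemma \ref{lem222}.

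For part (i) I would first approximate the conditioning $\sigma$-algebra $\mathcal{A}_{\mathcal{G}}=\pi^{-1}\mathcal{A}$ from inside by pullbacks of finite partitions. Applying Lemma \ref{Fact666} on $\mathcal{E}$ to the single measure $\pi m$ with fiber-diameters tending to $0$, I obtain finite partitions $\mathcal{P}_1\prec\mathcal{P}_2\prec\cdots$ of $\mathcal{E}$ generating $\mathcal{A}$ and satisfying $(\pi m)(\partial P)=0$ for every atom $P$. Put $\mathcal{A}_n=\sigma(\pi^{-1}\mathcal{P}_n)$; then $\mathcal{A}_n\nearrow\mathcal{A}_{\mathcal{G}}$, so by Lemma \ref{lem222}, given $\epsilon>0$ there is an $N$ with $H_m(\mathcal{R}\mid\pi^{-1}\mathcal{P}_N)\le H_m(\mathcal{R}\mid\mathcal{A}_{\mathcal{G}})+\epsilon$. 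Since $\pi^{-1}\mathcal{P}_N$ generates a sub-$\sigma$-algebra of $\mathcal{A}_{\mathcal{G}}$, monotonicity of conditional entropy gives $H_\mu(\mathcal{R}\mid\mathcal{A}_{\mathcal{G}})\le H_\mu(\mathcal{R}\mid\pi^{-1}\mathcal{P}_N)$ for every $\mu$.

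Next I would establish that $\mu\mapsto H_\mu(\mathcal{R}\mid\pi^{-1}\mathcal{P}_N)$ is continuous at $m$, writing $H_\mu(\mathcal{R}\mid\pi^{-1}\mathcal{P}_N)=H_\mu(\mathcal{R}\vee\pi^{-1}\mathcal{P}_N)-H_\mu(\pi^{-1}\mathcal{P}_N)$. Because each $\pi_\omega$ is continuous, $\partial\pi^{-1}P\subset\pi^{-1}\partial P$ fiberwise, so $m(\partial\pi^{-1}P)\le(\pi m)(\partial P)=0$; combined with the hypothesis $m(\partial R)=0$, every atom of $\pi^{-1}\mathcal{P}_N$ and of $\mathcal{R}\vee\pi^{-1}\mathcal{P}_N$ has $m$-null boundary. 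For a finite partition into sets with $m$-null boundary, $\mu\mapsto\mu(\cdot)$ is continuous at $m$ in the narrow topology, hence $\mu\mapsto\sum\phi(\mu(\cdot))$ with $\phi(t)=-t\log t$ is continuous at $m$, and so is the difference above. Therefore $\limsup_{\mu\to m}H_\mu(\mathcal{R}\mid\mathcal{A}_{\mathcal{G}})\le H_m(\mathcal{R}\mid\pi^{-1}\mathcal{P}_N)\le H_m(\mathcal{R}\mid\mathcal{A}_{\mathcal{G}})+\epsilon$; letting $\epsilon\to0$ yields (i).

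For part (ii) I would use the inf-formula $h_\mu(\mathcal{R}\mid\mathcal{A}_{\mathcal{G}})=\inf_n\frac1n H_\mu(\mathcal{R}^{(n)}\mid\mathcal{A}_{\mathcal{G}})$, valid since $\Lambda^{-1}\mathcal{A}_{\mathcal{G}}\subset\mathcal{A}_{\mathcal{G}}$, where $\mathcal{R}^{(n)}=\bigvee_{i=0}^{n-1}(\Lambda^i)^{-1}\mathcal{R}$. Each atom of $\mathcal{R}^{(n)}$ has boundary contained in $\bigcup_{i}(\Lambda^i)^{-1}\partial R$, and since $m$ is $\Lambda$-invariant these boundaries are $m$-null; hence part (i) applies to the finite partition $\mathcal{R}^{(n)}$, showing $\mu\mapsto\frac1n H_\mu(\mathcal{R}^{(n)}\mid\mathcal{A}_{\mathcal{G}})$ is upper semi-continuous at $m$ for each $n$. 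Because an infimum of functions upper semi-continuous at $m$ is again upper semi-continuous there (using $\limsup_{\mu\to m}\inf_n\le\inf_n\limsup_{\mu\to m}$), I conclude $\limsup_{\mu\to m}h_\mu(\mathcal{R}\mid\mathcal{A}_{\mathcal{G}})\le\inf_n\frac1n H_m(\mathcal{R}^{(n)}\mid\mathcal{A}_{\mathcal{G}})=h_m(\mathcal{R}\mid\mathcal{A}_{\mathcal{G}})$.

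The main obstacle is the continuity of $\mu\mapsto\mu(Q)$ at $m$ for a measurable $Q$ with $m$-null fiberwise boundary in the narrow topology on $\mathcal{P}_{\mathbb{P}}(\mathcal{G})$ — the random analogue of the portmanteau theorem — which has to be justified by approximating $1_Q$ from above and below by random continuous functions and invoking the null-boundary condition; once this is in hand, the remaining steps are routine manipulations with the standard entropy identities and the two cited lemmas.
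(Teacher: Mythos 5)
Your proposal is correct and follows essentially the same route as the paper's proof: approximate $\mathcal{A}_{\mathcal{G}}$ by pullbacks $\pi^{-1}\mathcal{Q}_k$ of the null-boundary partitions of $\mathcal{E}$ supplied by Lemma \ref{Fact666} applied to $\pi m$, use the random Portmanteau theorem to get continuity of $\mu\mapsto H_{\mu}(\mathcal{R}\mid\pi^{-1}\mathcal{Q}_k)$ at $m$, pass to the infimum via Lemma \ref{lem222} for (i), and for (ii) apply the same argument to $\mathcal{R}^{(n)}$ (whose atoms have $m$-null boundaries by invariance) and take the infimum over $n$. Your explicit justifications of $\partial\pi^{-1}P\subset\pi^{-1}\partial P$ and of the null boundaries of $\mathcal{R}^{(n)}$ are details the paper leaves implicit, but the argument is the same.
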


\begin{proof}
(i) For $R\in\mathcal{R}$ with $R(\omega)=\{x: (\omega,x)\in R\}$. Let $\overline{R}=\{(\omega,x):x\in \overline{R(\omega)}\}$ and $\underline{R}=\{(\omega,x):x\in  \text{int}( R(\omega))\}$, where $\overline{R(\omega)}$  and $\text{int}( R(\omega))$ denotes the closure and the interior of $R(\omega)$, respectively.
Then $\overline{R}$ is a closed  random  set of $\mathcal{G}$ and $\underline{R}$ is an open random set. By Portmenteau theorem (see \cite{Crauel}),
\begin{equation*}
m(\overline{R})\geq \limsup_{\mu\rightarrow m}\mu(\overline{R})\geq \limsup_{\mu\rightarrow m}\mu(R)
\geq \liminf_{\mu\rightarrow m}\mu(R)\geq \liminf_{\mu\rightarrow m}\mu(\underline{R})\geq m(\underline{R}).
\end{equation*}
Since $m(\overline{R})=m(R)=m(\underline{R})$ by $m(\partial R)=0$, then
$\mu\rightarrow \mu(R)$ defined on $\mathcal{P}_{\mathbb{P}}(\mathcal{G})$ is continuous at $m$.
Recall that the function $t\rightarrow -t\log t$ is continuous on $[0,1]$. Then
$\mu\rightarrow H_{\mu}(\mathcal{R})$ is also continuous at $m$ on $\mathcal{P}_{\mathbb{P}}(\mathcal{G})$.
Moreover, if $\mathcal{Q}=\{Q\}$ is a measurable partition of $\mathcal{G}$ with $m(\partial Q)=0$ for each $Q\in \mathcal{Q}$, then the conditional entropy $\mu\rightarrow H_{\mu}(\mathcal{R}\mid \mathcal{Q})$ of the partition $\mathcal{R}$ over $\mathcal{Q}$ is continuous at $m$.

Let $\nu=\pi m$. By Lemma \ref{Fact666}, there exists a refining sequence of finite measurable partitions $\mathcal{Q}_k=\{Q_{k_i}\}$ of $\mathcal{E}$ satisfying $\nu(\partial Q_{k_i})=0$, for each $Q_{k_i}\in \mathcal{Q}_k$, $k=1,2,\dots$. Then $\{\pi^{-1}\mathcal{Q}_k\}$  is a refining sequence of measurable partitions of $\mathcal{G}$, all having the boundary of measure zero at $m$.
It follows that for each $k\in \mathbb{N}$, the function $\mu\rightarrow H_{\mu}(\mathcal{R}\mid \pi^{-1}\mathcal{Q}_k)$ is continuous at $m$. Notice that $\bigvee_{k=1}^{\infty}\mathcal{Q}_k=\mathcal{A}$ and $H_{\mu}(\mathcal{R}\mid \pi^{-1}\mathcal{Q}_k)$ decrease in $k$.
Thus the function $\mu\rightarrow \inf_k H_{\mu}(\mathcal{R}\mid \pi^{-1}\mathcal{Q}_k)$ is upper semi-continuous at $m$ and the property (i) follows from Lemma \ref{lem222}.

(ii) Let $n\in \mathbb{N}$. Since the function $\mu\rightarrow \frac{1}{n}H_{\mu}(\mathcal{R}^{(n)}\mid \pi^{-1}\mathcal{Q}_k)$ is also continuous at $m$ for each $k=1,2,
\dots,$ where $\mathcal{R}^{(n)}=\bigvee_{i=0}^{n-1}(\Lambda^i)^{-1}\mathcal{R}$, then the function
$\mu\rightarrow \inf_k\frac{1}{n}H_{\mu}(\mathcal{R}^{(n)}\mid \pi^{-1}\mathcal{Q}_k)=\frac{1}{n}H_{\mu}(\mathcal{R}^{(n)}\mid \mathcal{A}_{\mathcal{G}})$ is upper semi-continuous at $m$. Therefore the function $\mu\rightarrow \inf_n\frac{1}{n}H_{\mu}(\mathcal{R}^{(n)}\mid \mathcal{A}_{\mathcal{G}})=h_{\mu}(\mathcal{R}\mid \mathcal{A}_{\mathcal{G}})$ is upper semi-continuous at $m$ and the property (ii) holds.
\end{proof}

We need the following lemma which shows the basic connection between the relative entropy and relative tail entropy.

\begin{lemma}\label{lemlog}
Let  $S$ be  a continuous bundle RDS on  $\mathcal{G}$. Suppose that $\mathcal{R}=\{R\} , \mathcal{Q}=\{Q\}$ are two finite measurable partitions of $\mathcal{G}$ and $\mu\in \mathcal{P}_{\mathbb{P}}(\mathcal{G})$, then
\begin{equation*}
H_{\mu}(\mathcal{R}\mid \mathfrak{Q}\vee \mathcal{F}_{\mathcal{G}})\leq
\int \log N(\mathcal{R}\mid \mathcal{Q})(\omega)d\mathbb{P},
\end{equation*}
where $\mathfrak{Q}$ is the sub-$\sigma$-algebra generated by the partition $\mathcal{Q}$ and $\mathcal{F}_{\mathcal{G}}=\{(F\times Y)\cap \mathcal{G}:F\in \mathcal{F}   \}$.

\end{lemma}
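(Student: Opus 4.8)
The plan is to reduce the inequality to a fiberwise estimate via the disintegration $d\mu(\omega,y)=d\mu_{\omega}(y)\,d\mathbb{P}(\omega)$, and then to apply the elementary bound that the entropy of a finite partition is at most the logarithm of the number of its nonempty atoms.

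First I would identify the conditional expectation $E(1_R\mid \mathfrak{Q}\vee \mathcal{F}_{\mathcal{G}})$. The $\sigma$-algebra $\mathcal{F}_{\mathcal{G}}$ produces the fiber disintegration $\mu_{\omega}$, while adjoining $\mathfrak{Q}$ further restricts to the cell $Q(\omega)\in\mathcal{Q}(\omega)$ containing the point. Thus, for $\mathbb{P}$-a.a. $\omega$ and $y\in Q(\omega)$,
\[
E(1_R\mid \mathfrak{Q}\vee \mathcal{F}_{\mathcal{G}})(\omega,y)=\frac{\mu_{\omega}(R(\omega)\cap Q(\omega))}{\mu_{\omega}(Q(\omega))},
\]
which is constant on each atom $\{\omega\}\times Q(\omega)$ and equals the conditional measure $\mu_{\omega,Q}(R(\omega))$ of $R(\omega)$ given $Q(\omega)$. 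Substituting this into the defining integral for $H_{\mu}(\mathcal{R}\mid \mathfrak{Q}\vee \mathcal{F}_{\mathcal{G}})$ and grouping the $y$-integration over the atoms $Q(\omega)$ yields the fiberwise identity
\[
H_{\mu}(\mathcal{R}\mid \mathfrak{Q}\vee \mathcal{F}_{\mathcal{G}})=\int H_{\mu_{\omega}}(\mathcal{R}(\omega)\mid \mathcal{Q}(\omega))\,d\mathbb{P}(\omega),
\]
which is the conditional analogue of the identity $H_{\mu}(\mathcal{R}\mid \mathcal{F}_{\mathcal{E}})=\int H_{\mu_{\omega}}(\mathcal{R}(\omega))\,d\mathbb{P}$ recorded in Section \ref{sec2}.

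Next I would bound the integrand pointwise. Expanding $H_{\mu_{\omega}}(\mathcal{R}(\omega)\mid \mathcal{Q}(\omega))=\sum_{Q}\mu_{\omega}(Q(\omega))\,H_{\mu_{\omega,Q}}(\mathcal{R}(\omega))$, I note that $\mu_{\omega,Q}$ is carried by $Q(\omega)$, so $\mathcal{R}(\omega)$ enters only through its nonempty traces on $Q(\omega)$. Because $\mathcal{R}$ is a \emph{partition}, the elements $R(\omega)$ meeting $Q(\omega)$ are exactly those needed to cover $Q(\omega)$, so their number is precisely $N(Q,\mathcal{R})(\omega)$. The elementary inequality that an entropy is dominated by the logarithm of the number of its nonempty atoms then gives $H_{\mu_{\omega,Q}}(\mathcal{R}(\omega))\leq \log N(Q,\mathcal{R})(\omega)\leq \log N(\mathcal{R}\mid \mathcal{Q})(\omega)$. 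Since $\sum_{Q}\mu_{\omega}(Q(\omega))=1$, this yields $H_{\mu_{\omega}}(\mathcal{R}(\omega)\mid \mathcal{Q}(\omega))\leq \log N(\mathcal{R}\mid \mathcal{Q})(\omega)$ for $\mathbb{P}$-a.a. $\omega$; integrating against $\mathbb{P}$ (the integrand being measurable by the earlier measurability lemma) gives the claim.

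The main obstacle is the bookkeeping in the first step: making precise that conditioning on $\mathfrak{Q}\vee\mathcal{F}_{\mathcal{G}}$ simultaneously selects the fiber and the $\mathcal{Q}$-cell, so that the conditional expectation is constant on each atom $\{\omega\}\times Q(\omega)$ with the stated value, and hence that the fiberwise identity above holds. Once that identity is established, the remaining estimate is routine, the only substantive point being the observation that $\mathcal{R}$ being a partition forces the covering number $N(Q,\mathcal{R})(\omega)$ to coincide with the count of nonempty traces of $\mathcal{R}(\omega)$ on $Q(\omega)$.
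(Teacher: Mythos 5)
Your proposal is correct and follows essentially the same route as the paper: identify $E(1_R\mid \mathfrak{Q}\vee\mathcal{F}_{\mathcal{G}})$ as the normalized fiber measure $\mu_{\omega}(R(\omega)\cap Q(\omega))/\mu_{\omega}(Q(\omega))$ on each cell, disintegrate to get $\int\sum_{Q}\mu_{\omega}(Q(\omega))\,H_{\mu_{\omega,Q}}(\mathcal{R}(\omega))\,d\mathbb{P}$, and bound each per-cell entropy by $\log N(Q,\mathcal{R})(\omega)$ using that a partition covering $Q(\omega)$ must use exactly the atoms meeting $Q(\omega)$. Your explicit justification of that last covering-number identification is a point the paper leaves implicit, but the argument is the same.
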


\begin{proof}
A simple calculation (see \cite[Section 14.2]{Glasner}) shows that
\begin{equation*}
E(1_R\mid \mathfrak{Q}\vee \mathcal{F}_{\mathcal{G}})
=\sum_{Q\in \mathcal{Q}}1_Q\frac{E(1_{R\cap Q}\mid \mathcal{F}_{\mathcal{G}} )}{E(1_Q\mid \mathcal{F}_{\mathcal{G}})}.
\end{equation*}
Then
\begin{align*}
H_{\mu}(\mathcal{R}\mid \mathfrak{Q}\vee \mathcal{F}_{\mathcal{G}})
&=\int \sum_{R\in \mathcal{R}}-1_R\log E(1_R\mid \mathfrak{Q}\vee \mathcal{F}_{\mathcal{G}})d\mu\\
&=\int \sum_{R\in \mathcal{R}}-1_R\log \sum_{Q\in \mathcal{Q}}1_Q\frac{E(1_{R\cap Q}\mid \mathcal{F}_{\mathcal{G}} )}{E(1_Q\mid \mathcal{F}_{\mathcal{G}})}d\mu\\
&=\int \sum_{R\in \mathcal{R}}-1_R \sum_{Q\in \mathcal{Q}}1_Q\log\frac{E(1_{R\cap Q}\mid \mathcal{F}_{\mathcal{G}} )}{E(1_Q\mid \mathcal{F}_{\mathcal{G}})}d\mu\\
&=\int \sum_{R\in \mathcal{R}}\sum_{Q\in \mathcal{Q}}-1_{R\cap Q}\log\frac{E(1_{R\cap Q}\mid \mathcal{F}_{\mathcal{G}} )}{E(1_Q\mid \mathcal{F}_{\mathcal{G}})}d\mu\\
&=\int\sum_{R\in \mathcal{R}}\sum_{Q\in \mathcal{Q}}- E(1_{R\cap Q}
\log\frac{E(1_{R\cap Q}\mid \mathcal{F}_{\mathcal{G}} )}{E(1_Q\mid \mathcal{F}_{\mathcal{G}})} \mid \mathcal{F}_{\mathcal{G}})d\mu\\
&=\int\sum_{R\in \mathcal{R}}\sum_{Q\in \mathcal{Q}}- E(1_{R\cap Q}\mid\mathcal{F}_{\mathcal{G}})
\log\frac{E(1_{R\cap Q}\mid \mathcal{F}_{\mathcal{G}} )}{E(1_Q\mid \mathcal{F}_{\mathcal{G}})}d\mu
\end{align*}
Since $\mu$ could disintegrate $d\mu(\omega,y)=d\mu_{\omega}(y)d\mathbb{P}(\omega)$,
$E(1_{R\cap Q}\mid\mathcal{F}_{\mathcal{G}})=\mu_{\omega}((R\cap Q)(\omega))$ and $E(1_Q\mid \mathcal{F}_{\mathcal{G}})=\mu_{\omega}(Q(\omega))$ $\mathbb{P}-$a.s., then
\begin{align*}
 H_{\mu}(\mathcal{R}\mid \mathfrak{Q}\vee \mathcal{F}_{\mathcal{G}})
 &=\int\sum_{R\in \mathcal{R}}\sum_{Q\in \mathcal{Q}}- \mu_{\omega}((R\cap Q)(\omega))\log\frac{\mu_{\omega}((R\cap Q)(\omega))}{\mu_{\omega}(Q(\omega))}d\mathbb{P}\\
 &=\int\sum_{Q\in \mathcal{Q}}\mu_{\omega}(Q(\omega))
 \big(
 -\sum_{R\in \mathcal{R}} \frac{\mu_{\omega}((R\cap Q)(\omega))}{\mu_{\omega}(Q(\omega))}\log\frac{\mu_{\omega}((R\cap Q)(\omega))}{\mu_{\omega}(Q(\omega))}\big)d\mathbb{P}\\
\end{align*}
Notice that
\begin{equation*}
-\sum_{R\in \mathcal{R}} \frac{\mu_{\omega}((R\cap Q)(\omega))}{\mu_{\omega}(Q(\omega))}\log\frac{\mu_{\omega}((R\cap Q)(\omega))}{\mu_{\omega}(Q(\omega))}\big)\leq \log N(Q, \mathcal{R})(\omega).
\end{equation*}
Thus
\begin{align*}
H_{\mu}(\mathcal{R}\mid \mathfrak{Q}\vee \mathcal{F}_{\mathcal{G}})
\leq \int \sum_{Q\in \mathcal{Q}}\mu_{\omega}(Q(\omega))\log N(Q, \mathcal{R})(\omega)d\mathbb{P}\leq \int \log N(\mathcal{R}\mid\mathcal{Q})(\omega)d\mathbb{P}.
\end{align*}
\end{proof}

\begin{remark}
When we consider the relative entropy $H_{\mu}(\mathcal{R}\mid \mathfrak{Q})$ with respect to two measurable partitions $\mathcal{R}$ and $\mathcal{Q}$, it is not hard to see that $H_{\mu}(\mathcal{R}\mid \mathfrak{Q})\leq N(\mathcal{R}\mid\mathcal{Q})$, which is similar to the case in the deterministic system. Moreover, the iteration of the random transformation is not necessary in this lemma, though we assume that the condition is in the environment of random dynamical systems.
\end{remark}

\section{Variational principle for relative tail entropy }\label{sec4}

We now take up the consideration of the relationship between the relative entropy and relative tail entropy on the measurable subset $\mathcal{H}$ of $\Omega\times Y\times X$ with respect to the product $\sigma-$algebra $\mathcal{F}\times\mathcal{C}\times \mathcal{B}$.
The following result follows from Lemma \ref{lem22} directly.

\begin{lemma}\label{lem2}
Let $\mathcal{R}=\{R_1,\dots, R_k\}$ be a finite measurable partition of $\mathcal{H}$. Given $m\in \mathcal{P}_{\mathbb{P}}(\mathcal{H})$ satisfying $m(\partial R_i)=0$ for each $1\leq i\leq k$, then
$m$ is a upper semi-continuity point of the function $\mu\rightarrow H_{\mu}(\mathcal{R}\mid  \mathcal{D}_{\mathcal{H}})$ defined on $ \mathcal{P}_{\mathbb{P}}(\mathcal{H})$, {i.e.},
\begin{equation*}
\limsup_{\mu\rightarrow m}H_{\mu}(\mathcal{R}\mid  \mathcal{D}_{\mathcal{H}})\leq H_m(\mathcal{R}\mid  \mathcal{D}_{\mathcal{H}}).
\end{equation*}
\end{lemma}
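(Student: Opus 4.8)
The plan is to obtain this as an immediate specialization of Lemma \ref{lem22}(i), applied to the factor structure furnished by the projection $\pi_{\mathcal{G}}$. The only work is to match the present setting to the hypotheses of that lemma and then invoke it; there is no genuine analytic content to reprove, since the Portmanteau/martingale argument underlying Lemma \ref{lem22} is exactly what is needed here.

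First I would record the factor structure explicitly. The map $\pi_{\mathcal{G}}:\mathcal{H}\rightarrow\mathcal{G}$, $\pi_{\mathcal{G}}(\omega,y,x)=(\omega,y)$, satisfies $\pi_{\mathcal{G}}\circ\Gamma=\Lambda\circ\pi_{\mathcal{G}}$, because $\pi_{\mathcal{G}}(\vartheta\omega,S_{\omega}y,T_{\omega}x)=(\vartheta\omega,S_{\omega}y)=\Lambda(\omega,y)$; thus $(\mathcal{G},\Lambda)$ is a factor of $(\mathcal{H},\Gamma)$ via $\pi_{\mathcal{G}}$, exactly as noted in Section \ref{sec2}. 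Under this identification the extension system $(\mathcal{G},\Lambda)$ of Lemma \ref{lem22} is replaced by $(\mathcal{H},\Gamma)$, its factor $(\mathcal{E},\Theta)$ is replaced by $(\mathcal{G},\Lambda)$, and the factor map $\pi$ is replaced by $\pi_{\mathcal{G}}$. The conditioning $\sigma$-algebra $\mathcal{A}_{\mathcal{G}}=\pi^{-1}(\mathcal{A})$ of that lemma correspondingly becomes $\pi_{\mathcal{G}}^{-1}(\mathcal{D})=\mathcal{D}_{\mathcal{H}}$, where $\mathcal{D}$ is the restriction of $\mathcal{F}\times\mathcal{C}$ to $\mathcal{G}$, i.e. precisely the base $\sigma$-algebra of the factor $\mathcal{G}$.

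With this dictionary in place the remaining hypotheses transfer verbatim: $\mathcal{R}=\{R_1,\dots,R_k\}$ is a finite measurable partition of $\mathcal{H}$ (playing the role of $\mathcal{G}$ in Lemma \ref{lem22}), we have $m\in\mathcal{P}_{\mathbb{P}}(\mathcal{H})$, and the boundary condition $m(\partial R_i)=0$ — read fiberwise as $m(\partial R_i)=\int m_{\omega}(\partial R_i(\omega))\,d\mathbb{P}=0$ for each $i$ — is the same assumption required there. Lemma \ref{lem22}(i) then yields directly that $m$ is an upper semi-continuity point of $\mu\rightarrow H_{\mu}(\mathcal{R}\mid\mathcal{D}_{\mathcal{H}})$ on $\mathcal{P}_{\mathbb{P}}(\mathcal{H})$, which is the asserted inequality. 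The only point demanding any care is the bookkeeping: confirming that $\pi_{\mathcal{G}}$ is the relevant factor map and that $\mathcal{D}_{\mathcal{H}}$ is its pullback of the base $\sigma$-algebra, so that the interpretation of $\partial R_i$ and of the conditioning coincide with those in Lemma \ref{lem22}.
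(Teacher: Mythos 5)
Your proposal is correct and matches the paper exactly: the paper itself states that Lemma \ref{lem2} ``follows from Lemma \ref{lem22} directly,'' and your identification of $(\mathcal{H},\Gamma)$ as the extension, $(\mathcal{G},\Lambda)$ as the factor via $\pi_{\mathcal{G}}$, and $\mathcal{D}_{\mathcal{H}}=\pi_{\mathcal{G}}^{-1}(\mathcal{D})$ as the pulled-back base $\sigma$-algebra is precisely the intended specialization. The bookkeeping you supply (checking that $\pi_{\mathcal{G}}$ is a genuine factor transformation and that the boundary hypothesis transfers fiberwise) is exactly what is needed and nothing more.
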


\begin{lemma}\label{lem3}
Let $S\times T$ be the continuous bundle RDSs on  $\mathcal{H}$ and $\mu\in\mathcal{P}_{\mathbb{P}}(\mathcal{H})$.
 Suppose that $\mathcal{R}, \mathcal{Q}$ are two finite measurable partitions of $\mathcal{H}$, Then
 \begin{equation*}
H_{\mu}(\mathcal{R}\mid \mathcal{D}_{\mathcal{H}})
\leq H_{\mu}(\mathcal{Q}\mid \mathcal{D}_{\mathcal{H}}) +
\int\log N(\mathcal{R}\mid\mathcal{Q})(\omega)d\mathbb{P}.
\end{equation*}

\end{lemma}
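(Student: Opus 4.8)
The plan is to reduce the statement to Lemma \ref{lemlog} by combining two purely measure-theoretic facts about conditional entropy: monotonicity under refinement of the partition, and the chain rule for conditional entropy given a $\sigma$-algebra. First I would note that since $\mathcal{R}\vee\mathcal{Q}$ refines $\mathcal{R}$,
$$H_{\mu}(\mathcal{R}\mid\mathcal{D}_{\mathcal{H}})\le H_{\mu}(\mathcal{R}\vee\mathcal{Q}\mid\mathcal{D}_{\mathcal{H}}),$$
and then apply the addition formula for conditional entropy,
$$H_{\mu}(\mathcal{R}\vee\mathcal{Q}\mid\mathcal{D}_{\mathcal{H}})=H_{\mu}(\mathcal{Q}\mid\mathcal{D}_{\mathcal{H}})+H_{\mu}(\mathcal{R}\mid\mathfrak{Q}\vee\mathcal{D}_{\mathcal{H}}),$$
where $\mathfrak{Q}$ denotes the sub-$\sigma$-algebra generated by $\mathcal{Q}$. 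Both identities hold for arbitrary finite partitions and any conditioning $\sigma$-algebra, so no special structure of the bundle is needed here.

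It then remains to bound the last term by $\int\log N(\mathcal{R}\mid\mathcal{Q})(\omega)\,d\mathbb{P}$, and this is where Lemma \ref{lemlog} enters. The key point is that $S\times T$ is itself a continuous bundle RDS over $(\Omega,\mathcal{F},\mathbb{P},\vartheta)$ with fiber space $\mathcal{H}$, so Lemma \ref{lemlog} applies verbatim with $\mathcal{H}$ in the role of $\mathcal{G}$ and $\mathcal{F}_{\mathcal{H}}$ in the role of $\mathcal{F}_{\mathcal{G}}$, yielding
$$H_{\mu}(\mathcal{R}\mid\mathfrak{Q}\vee\mathcal{F}_{\mathcal{H}})\le\int\log N(\mathcal{R}\mid\mathcal{Q})(\omega)\,d\mathbb{P}.$$

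The observation tying everything together is the inclusion $\mathcal{F}_{\mathcal{H}}\subseteq\mathcal{D}_{\mathcal{H}}$: for $F\in\mathcal{F}$ the set $(F\times Y)\cap\mathcal{G}$ lies in $\mathcal{D}$, and its $\pi_{\mathcal{G}}$-preimage is precisely $(F\times Y\times X)\cap\mathcal{H}$, so every generator of $\mathcal{F}_{\mathcal{H}}$ is a generator of $\mathcal{D}_{\mathcal{H}}$. Consequently $\mathfrak{Q}\vee\mathcal{F}_{\mathcal{H}}\subseteq\mathfrak{Q}\vee\mathcal{D}_{\mathcal{H}}$, and since conditioning on a larger $\sigma$-algebra cannot increase conditional entropy,
$$H_{\mu}(\mathcal{R}\mid\mathfrak{Q}\vee\mathcal{D}_{\mathcal{H}})\le H_{\mu}(\mathcal{R}\mid\mathfrak{Q}\vee\mathcal{F}_{\mathcal{H}})\le\int\log N(\mathcal{R}\mid\mathcal{Q})(\omega)\,d\mathbb{P}.$$
Substituting this into the chain-rule identity and using the monotonicity bound from the first paragraph gives the claimed inequality.

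I expect the only genuine subtlety to be the direction of this final monotonicity step. The conclusion of Lemma \ref{lemlog} is fiber-wise over $\omega$, whereas $\mathcal{D}_{\mathcal{H}}$ encodes the strictly finer $(\omega,y)$-information; one must check that passing to the richer $\mathcal{D}_{\mathcal{H}}$ is in the favorable direction. The inclusion $\mathcal{F}_{\mathcal{H}}\subseteq\mathcal{D}_{\mathcal{H}}$ is exactly what guarantees this, so conditioning on the larger $\sigma$-algebra only decreases the entropy and the coarser bound $\int\log N(\mathcal{R}\mid\mathcal{Q})(\omega)\,d\mathbb{P}$ continues to dominate.
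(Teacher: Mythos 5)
Your proposal is correct and follows essentially the same route as the paper: the chain rule $H_{\mu}(\mathcal{R}\mid\mathcal{D}_{\mathcal{H}})\le H_{\mu}(\mathcal{Q}\mid\mathcal{D}_{\mathcal{H}})+H_{\mu}(\mathcal{R}\mid\mathfrak{Q}\vee\mathcal{D}_{\mathcal{H}})$, followed by dropping from $\mathfrak{Q}\vee\mathcal{D}_{\mathcal{H}}$ to the smaller $\sigma$-algebra $\mathfrak{Q}\vee\mathcal{F}_{\mathcal{H}}$ and invoking Lemma \ref{lemlog}. The only cosmetic difference is that the paper first rewrites $\mathcal{D}_{\mathcal{H}}$ as $\mathcal{D}_{\mathcal{H}}\vee\mathcal{F}_{\mathcal{H}}$ and carries $\mathcal{F}_{\mathcal{H}}$ along explicitly, whereas you introduce the inclusion $\mathcal{F}_{\mathcal{H}}\subseteq\mathcal{D}_{\mathcal{H}}$ only at the final monotonicity step.
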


\begin{proof}
Since $\mathcal{F}_{\mathcal{H}}$ is a sub-$\sigma$-algebra of $\mathcal{D}_{\mathcal{H}}$, then
$\mathcal{D}_{\mathcal{H}}\vee\mathcal{F}_{\mathcal{H}}=\mathcal{D}_{\mathcal{H}}$.
Let $\mathfrak{Q} $ be the sub-$\sigma$-algebra generated by the partition $\mathcal{Q}$.
By Lemma \ref{lemlog},
\begin{align*}
H_{\mu}(\mathcal{R}\mid \mathcal{D}_{\mathcal{H}})
&=H_{\mu}(\mathcal{R}\mid \mathcal{D}_{\mathcal{H}}\vee\mathcal{F}_{\mathcal{H}})\\
&\leq H_{\mu}(\mathcal{Q}\mid \mathcal{D}_{\mathcal{H}}\vee\mathcal{F}_{\mathcal{H}})
+H_{\mu}(\mathcal{R}\mid \mathfrak{Q} \vee \mathcal{D}_{\mathcal{H}}\vee\mathcal{F}_{\mathcal{H}})
\\
&\leq H_{\mu}(\mathcal{Q}\mid \mathcal{D}_{\mathcal{H}}\vee\mathcal{F}_{\mathcal{H}})
+H_{\mu}(\mathcal{R}\mid \mathfrak{Q}  \vee\mathcal{F}_{\mathcal{H}})
\\
&\leq H_{\mu}(\mathcal{Q}\mid \mathcal{D}_{\mathcal{H}})+
\int \log N(\mathcal{R}\mid\mathcal{Q})(\omega)d\mathbb{P},
\end{align*}
 and the result holds.
\end{proof}

\begin{proposition}\label{prop1}
  Let $S\times T$ be the continuous bundle RDS on  $\mathcal{H}$ and $\mu\in\mathcal{I}_{\mathbb{P}}(\mathcal{H})$. Then for  each finite measurable partition $\mathcal{Q}$ of $\mathcal{E}$,
  $$h_{\mu}(\Gamma\mid\mathcal{D}_{\mathcal{H}} )\leq h_{\mu}(\pi_{\mathcal{E}}^{-1}\mathcal{Q}\mid \mathcal{D}_{\mathcal{H}})+h(\Theta\mid\mathcal{Q}).$$
\end{proposition}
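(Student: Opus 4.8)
The plan is to prove the pointwise statement that for every finite measurable partition $\mathcal{P}$ of $\mathcal{H}$ one has $h_{\mu}(\mathcal{P}\mid\mathcal{D}_{\mathcal{H}})\le h_{\mu}(\pi_{\mathcal{E}}^{-1}\mathcal{Q}\mid\mathcal{D}_{\mathcal{H}})+h(\Theta\mid\mathcal{Q})$, since $h_{\mu}(\Gamma\mid\mathcal{D}_{\mathcal{H}})$ is the supremum of the left-hand side over all such $\mathcal{P}$. First I would simplify the partition. Because $\mathcal{D}_{\mathcal{H}}=\pi_{\mathcal{G}}^{-1}\mathcal{D}$ already encodes the whole $\mathcal{G}$-coordinate, the only information a partition of $\mathcal{H}$ contributes beyond $\mathcal{D}_{\mathcal{H}}$ lives in the $\mathcal{E}$-direction. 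Concretely, if $\{\mathcal{S}_k\}$ is a refining sequence of finite partitions of $\mathcal{E}$ generating $\mathcal{A}$ (produced by Lemma~\ref{Fact666}), then $\pi_{\mathcal{E}}^{-1}\mathcal{S}_k\vee\mathcal{D}_{\mathcal{H}}$ increases to the full $\sigma$-algebra on $\mathcal{H}$, so from the relative entropy inequality $h_{\mu}(\mathcal{P}\mid\mathcal{D}_{\mathcal{H}})\le h_{\mu}(\pi_{\mathcal{E}}^{-1}\mathcal{S}_k\mid\mathcal{D}_{\mathcal{H}})+H_{\mu}(\mathcal{P}\mid\pi_{\mathcal{E}}^{-1}\mathcal{S}_k\vee\mathcal{D}_{\mathcal{H}})$ together with the martingale convergence of Lemma~\ref{lem222} (the last term tends to $0$), it suffices to treat partitions of the form $\mathcal{P}=\pi_{\mathcal{E}}^{-1}\mathcal{S}$ with $\mathcal{S}$ a finite measurable partition of $\mathcal{E}$.

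Fix such an $\mathcal{S}$. Using $\pi_{\mathcal{E}}\circ\Gamma=\Theta\circ\pi_{\mathcal{E}}$ one gets $(\pi_{\mathcal{E}}^{-1}\mathcal{S})^{(n)}=\pi_{\mathcal{E}}^{-1}(\mathcal{S}^{(n)})$, likewise for $\mathcal{Q}$, and $\Gamma^{-1}\mathcal{D}_{\mathcal{H}}\subset\mathcal{D}_{\mathcal{H}}$. The chain rule then gives
\[
H_{\mu}(\pi_{\mathcal{E}}^{-1}\mathcal{S}^{(n)}\mid\mathcal{D}_{\mathcal{H}})\le H_{\mu}(\pi_{\mathcal{E}}^{-1}\mathcal{Q}^{(n)}\mid\mathcal{D}_{\mathcal{H}})+H_{\mu}(\pi_{\mathcal{E}}^{-1}\mathcal{S}^{(n)}\mid\pi_{\mathcal{E}}^{-1}\mathcal{Q}^{(n)}\vee\mathcal{D}_{\mathcal{H}}).
\]
Dividing by $n$ and letting $n\to\infty$, the first term converges to $h_{\mu}(\pi_{\mathcal{E}}^{-1}\mathcal{Q}\mid\mathcal{D}_{\mathcal{H}})$, which is exactly the first term of the desired bound, so the task is reduced to estimating the Ces\`aro limit of the second term. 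For this I would invoke Lemma~\ref{lemlog} on $\mathcal{H}$: since $\mathcal{F}_{\mathcal{H}}\subset\mathcal{D}_{\mathcal{H}}$ and the conditioning $\sigma$-algebra contains $\mathfrak{Q}'\vee\mathcal{F}_{\mathcal{H}}$, where $\mathfrak{Q}'$ is generated by $\pi_{\mathcal{E}}^{-1}\mathcal{Q}^{(n)}$, monotonicity of conditional entropy yields
\[
H_{\mu}(\pi_{\mathcal{E}}^{-1}\mathcal{S}^{(n)}\mid\pi_{\mathcal{E}}^{-1}\mathcal{Q}^{(n)}\vee\mathcal{D}_{\mathcal{H}})\le\int\log N(\pi_{\mathcal{E}}^{-1}\mathcal{S}^{(n)}\mid\pi_{\mathcal{E}}^{-1}\mathcal{Q}^{(n)})(\omega)\,d\mathbb{P}.
\]
Because $\pi_{\mathcal{E}}$ is the fibrewise surjection $(y,x)\mapsto x$ from $\mathcal{H}_{\omega}$ onto $\mathcal{E}_{\omega}$, pulling back preserves inclusions and unions, so $N(\pi_{\mathcal{E}}^{-1}\mathcal{S}^{(n)}\mid\pi_{\mathcal{E}}^{-1}\mathcal{Q}^{(n)})(\omega)=N(\mathcal{S}^{(n)}\mid\mathcal{Q}^{(n)})(\omega)$; this is precisely where conditioning on all of $\mathcal{D}_{\mathcal{H}}$ rather than merely on $\mathcal{F}_{\mathcal{H}}$ is needed, since it discards the $\mathcal{G}$-direction and leaves a quantity living entirely on $\mathcal{E}$.

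The remaining and hardest step is to bound $\limsup_n\frac1n\int\log N(\mathcal{S}^{(n)}\mid\mathcal{Q}^{(n)})(\omega)\,d\mathbb{P}$ by $h(\Theta\mid\mathcal{Q})=\sup_{\mathcal{R}\in\mathfrak{U}(\mathcal{E})}h_{\Theta}(\mathcal{R}\mid\mathcal{Q})$. The difficulty is intrinsic: $h(\Theta\mid\mathcal{Q})$ is a supremum over \emph{open} random covers $\mathcal{R}$, whose covering numbers are economical because overlaps are permitted, whereas the atoms of the partition $\mathcal{S}^{(n)}$ are disjoint, so covering a $\mathcal{Q}^{(n)}$-cell forces the use of every atom meeting it and can overcount. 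To close this gap I would choose the generating partitions $\mathcal{S}=\mathcal{S}_k$ in the first step via Lemma~\ref{Fact666} so that their fibre atoms have diameter at most a prescribed random $\delta(\omega)$ and $\mu$-null boundaries, and then compare $\mathcal{S}$ with an open random cover $\mathcal{R}$ of $\mathcal{E}$ obtained by thickening the atoms to open sets of comparable diameter, using the inequalities \eqref{n1}--\eqref{n4} to control $N(\mathcal{S}^{(n)}\mid\mathcal{Q}^{(n)})$ by $N(\mathcal{R}^{(n)}\mid\mathcal{Q}^{(n)})$ up to a factor accounting for the boundary overlaps; one then passes to the supremum over $\mathcal{R}$ and the limit in $k$. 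I expect this comparison between the disjoint partition $\mathcal{S}$ and genuine open covers, namely the control of the boundary contribution at fine scales, to be the main obstacle, exactly as in the deterministic tail-entropy arguments of Misiurewicz and Ledrappier on which the relative statement is modelled.
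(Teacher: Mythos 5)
Your skeleton matches the paper's up to the point you yourself flag as the hardest step: reduce to partitions of the form $\pi_{\mathcal{E}}^{-1}\mathcal{S}$ with $\mathcal{S}$ a partition of $\mathcal{E}$, apply the chain rule against $\pi_{\mathcal{E}}^{-1}\mathcal{Q}^{(n)}$, and bound the conditional term via Lemma~\ref{lemlog} by $\int\log N(\mathcal{S}^{(n)}\mid\mathcal{Q}^{(n)})(\omega)\,d\mathbb{P}$ (this is exactly Lemma~\ref{lem3}). But your plan for the remaining step --- thickening the atoms of $\mathcal{S}$ into an open random cover $\mathcal{R}$ and controlling the discrepancy by ``boundary overlaps'' --- does not work as stated. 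If $S_i\subset R_i$ with $R_i$ open, then $\mathcal{S}\succ\mathcal{R}$ and \eqref{n1} gives $N(\mathcal{R}^{(n)}\mid\mathcal{Q}^{(n)})(\omega)\leq N(\mathcal{S}^{(n)}\mid\mathcal{Q}^{(n)})(\omega)$, i.e.\ the comparison runs in the wrong direction; and the covering numbers $N(\cdot\mid\cdot)$ are purely combinatorial, so arranging that the atoms have $\mu$-null boundaries buys you nothing --- a single element of $\mathcal{R}^{(n)}$ in a minimal subcover may meet exponentially many atoms of $\mathcal{S}^{(n)}$, and no per-step factor controls this in general.

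The paper's device is different and is the heart of the proof. Given $\mathcal{R}=\{R_1,\dots,R_k\}$, one shrinks each $R_i$ to a compact $P_i\subset R_i$ with $\nu(R_i\setminus P_i)$ small, adds the open remainder $P_0=\mathcal{E}\setminus\bigcup_i P_i$ as an extra atom to form $\mathcal{P}$, and takes the \emph{open} random cover $\mathcal{U}=\{P_0\cup P_i\}_{i=1}^{k}$. Then $N(\mathcal{P}\mid\mathcal{U})\leq 2$, so \eqref{n3} yields $N(\mathcal{P}^{(n)}\mid\mathcal{Q}^{(n)})(\omega)\leq N(\mathcal{U}^{(n)}\mid\mathcal{Q}^{(n)})(\omega)\cdot 2^{n}$, giving the bound $h_{\mu}(\pi_{\mathcal{E}}^{-1}\mathcal{Q}\mid\mathcal{D}_{\mathcal{H}})+h_{\Theta}(\mathcal{U}\mid\mathcal{Q})+\log 2+\epsilon$, where $\epsilon$ is the entropy cost $H_{\nu}(\mathcal{R}\mid\mathcal{P})$ controlled by Lemma~\ref{lem415}. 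Two further ingredients you omit are then essential: first, the additive $\log 2$ is genuinely there and must be removed by the power trick --- apply the inequality to $\Gamma^{m}$, $\Theta^{m}$ and $\mathcal{Q}^{(m)}$, use Proposition~\ref{power} together with $h_{\mu}(\Gamma^{m}\mid\mathcal{D}_{\mathcal{H}})=mh_{\mu}(\Gamma\mid\mathcal{D}_{\mathcal{H}})$, divide by $m$ and let $m\to\infty$; second, note that the argument only produces $h_{\Theta}(\mathcal{U}\mid\mathcal{Q})\leq h(\Theta\mid\mathcal{Q})$ because $\mathcal{U}$ is an honest open random cover, which is precisely what your thickened cover was meant to supply but cannot with the inequality oriented as it is. Without a concrete replacement for the $P_0\cup P_i$ construction and without the power argument, your proof has a genuine gap at its central step.
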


\begin{proof}
Let $\mathcal{R}=\{R_1,\dots,R_k\}$ be a measurable partition of $\mathcal{E}$ and $\nu=\pi_{\mathcal{E}}\mu$.

Recall that $(\Omega,\mathcal{F},\mathbb{P})$ can be view as a Borel subset of the unit interval $[0,1]$. Then $\nu\in \mathcal{P}_{\mathbb{P}}(\mathcal{E})$ is also a probability measure on the compact space $[0,1]\times X$ with the marginal $\mathbb{P}$ on $[0,1]$. Let $\epsilon >0$ and $\delta>0$ as desired in Lemma \ref{lem415}. Since $\nu$ is regular, there exists a compact subset $P_i\subset R_i$ with $\nu(R_i\setminus P_i)<\frac{\delta}{2k}$ for each $1\leq i\leq k$.
Denote by
$P_0=\mathcal{E}\setminus \bigcup_{i=1}^kP_i$. Then $\mathcal{P}=\{P_0,P_1,\dots, P_k\}$ is a measurable partition of $\mathcal{E}$ and
$\sum_{i=1}^k\nu(R_i\setminus P_i)+\nu (P_0)< \frac{\delta}{2k}\cdot k+\frac{\delta}{2}=\delta.$
By Lemma \ref{lem415}, $H_{\nu}(\mathcal{R}\mid \mathcal{P})<\epsilon.$
Let $U_i=P_0\cup P_i$. Since for each $\omega\in \Omega$, $(P_0\cup P_i)\omega$ is an open subset of $\mathcal{E}_{\omega}$, then $\mathcal{U}=\{U_1,\dots, U_k\}$ is an open random  cover of $\mathcal{E}$,
and $N(\mathcal{P}\mid \mathcal{U})(\omega)\leq N(\mathcal{P}\mid \mathcal{U})\leq 2.$
Then
\begin{align*}
h_{\mu}(\pi_{\mathcal{E}}^{-1}\mathcal{R}\mid \mathcal{D}_{\mathcal{H}})
&\leq h_{\mu}(\pi_{\mathcal{E}}^{-1}\mathcal{P}\mid \mathcal{D}_{\mathcal{H}})
+H_{\mu}(\pi_{\mathcal{E}}^{-1}\mathcal{R}\mid \pi_{\mathcal{E}}^{-1}\mathcal{P})\\
&= h_{\mu}(\pi_{\mathcal{E}}^{-1}\mathcal{P}\mid \mathcal{D}_{\mathcal{H}})
+ H_{\nu}(\mathcal{R}\mid \mathcal{P})\\
&\leq h_{\mu}(\pi_{\mathcal{E}}^{-1}\mathcal{P}\mid \mathcal{D}_{\mathcal{H}})
+ \epsilon.
\end{align*}
By Lemma \ref{lem3}, one has
\begin{equation*}
H_{\mu}(\pi_{\mathcal{E}}^{-1}\mathcal{P}\mid \mathcal{D}_{\mathcal{H}})
\leq H_{\mu}(\pi_{\mathcal{E}}^{-1}\mathcal{Q}\mid \mathcal{D}_{\mathcal{H}})
+\int \log N(\mathcal{P}\mid \mathcal{Q})(\omega)d\mathbb{P}.
\end{equation*}
Notice that for each $\omega\in \Omega$,
$N(\mathcal{P}\mid \mathcal{Q})(\omega)\leq N(\mathcal{U}\mid \mathcal{Q})(\omega)\cdot N(\mathcal{P}\mid \mathcal{U})(\omega) $.
Then
\begin{align*}
H_{\mu}(\pi_{\mathcal{E}}^{-1}\mathcal{P}\mid \mathcal{D}_{\mathcal{H}})
\leq H_{\mu}(\pi_{\mathcal{E}}^{-1}\mathcal{Q}\mid \mathcal{D}_{\mathcal{H}})
+\int \log &N(\mathcal{U}\mid \mathcal{Q})(\omega)d\mathbb{P}\\
&+\int\log N(\mathcal{P} \mid \mathcal{U})(\omega)d\mathbb{P}.
\end{align*}
Applying the above result to $\bigvee_{i=0}^{n-1}(\Theta^i)^{-1}\mathcal{P},  \bigvee_{i=0}^{n-1}(\Theta^i)^{-1}\mathcal{Q},$ and $\bigvee_{i=0}^{n-1}(\Theta^i)^{-1}\mathcal{U}, $
dividing by $n$ and letting $n\rightarrow \infty$, one obtain
\begin{align*}
h_{\mu}(\pi_{\mathcal{E}}^{-1}\mathcal{P}\mid \mathcal{D}_{\mathcal{H}})
\leq h_{\mu}(&\pi_{\mathcal{E}}^{-1}\mathcal{Q}\mid \mathcal{D}_{\mathcal{H}})
+h_{\Theta}(\mathcal{U}\mid \mathcal{Q})\\
&+ \lim_{n\rightarrow\infty}\frac{1}{n}\int \log N\big(\bigvee_{i=0}^{n-1}(\Theta^i)^{-1}\mathcal{P}\mid \bigvee_{i=0}^{n-1}(\Theta^i)^{-1}\mathcal{U}\big)d\mathbb{P}.
\end{align*}
Observe that
\begin{equation*}
\lim_{n\rightarrow\infty}\frac{1}{n}\int \log N\big(\bigvee_{i=0}^{n-1}(\Theta^i)^{-1}\mathcal{P}\mid \bigvee_{i=0}^{n-1}(\Theta^i)^{-1}\mathcal{U}\big)d\mathbb{P}\leq \log 2,
\end{equation*}
and $h_{\Theta}(\mathcal{U}\mid \mathcal{Q})\leq h(\Theta\mid \mathcal{Q})$,
then
\begin{align*}
h_{\mu}(\pi_{\mathcal{E}}^{-1}\mathcal{R}\mid \mathcal{D}_{\mathcal{H}})
&\leq h_{\mu}(\pi_{\mathcal{E}}^{-1}\mathcal{P}\mid \mathcal{D}_{\mathcal{H}})+H_{\mu}(\pi_{\mathcal{E}}^{-1}\mathcal{R}\mid \pi_{\mathcal{E}}^{-1}\mathcal{P})\\
&\leq h_{\mu}(\pi_{\mathcal{E}}^{-1}\mathcal{Q}\mid \mathcal{D}_{\mathcal{H}})
+h(\Theta\mid \mathcal{Q})+\log 2+\epsilon.
\end{align*}

Let $\mathcal{R}_1\prec\cdots\prec\mathcal{R}_n\prec \cdots$ be an increasing sequence of finite measurable partitions with $\bigvee_{i=1}^{\infty}\mathcal{R}_n=\mathcal{A}$, by Lemma 1.6 in \cite{Kifer}, one has
\begin{equation}\label{ineq2}
h_{\mu}(\Gamma\mid\mathcal{D}_{\mathcal{H}})\leq  h_{\mu}(\pi_{\mathcal{E}}^{-1}\mathcal{Q}\mid \mathcal{D}_{\mathcal{H}})
+h(\Theta\mid \mathcal{Q})+\log 2+\epsilon.
\end{equation}

Since
\begin{equation*}
H_{\mu}\big(
\bigvee_{j=0}^{n-1}(\Gamma^{mj})^{-1}(\bigvee_{i=0}^{m-1}(\Gamma^i)^{-1}\pi^{-1}_{\mathcal{E}}\mathcal{Q})
\mid \mathcal{D}_{\mathcal{H}}
\big)
=H_{\mu}\big(
\bigvee_{i=0}^{nm-1}(\Gamma^i)^{-1}\pi^{-1}_{\mathcal{E}}\mathcal{Q}\mid \mathcal{D}_{\mathcal{H}}
\big).
\end{equation*}
It is not hard to see that
\begin{equation}\label{eq2}
h_{\mu,\Gamma^{^m}}(\bigvee_{i=0}^{m-1}(\Gamma^i)^{-1}\pi^{-1}_{\mathcal{E}}\mathcal{Q}
\mid \mathcal{D}_{\mathcal{H}})
=mh_{\mu}(\pi_{\mathcal{E}}^{-1}\mathcal{Q}\mid \mathcal{D}_{\mathcal{H}} ),
\end{equation}
where $h_{\mu,\Gamma^{^m}}(\xi\mid \mathcal{D}_{\mathcal{H}})$ denotes the relative entropy of $\Gamma^m $ with respect to the partition $\xi$.

By Lemma 1.4 in \cite{Kifer}, for each $m\in \mathbb{N}$,
\begin{equation}\label{eq3}
h_{\mu}(\Gamma^m\mid \mathcal{D}_{\mathcal{H}})=mh_{\mu}(\Gamma\mid \mathcal{D}_{\mathcal{H}}),
\end{equation}
where $h_{\mu}(\Gamma^m\mid \mathcal{D}_{\mathcal{H}})$ is the relative entropy of $\Gamma^m$.

By the equality \eqref{eq2}, \eqref{eq3} and Proposition \ref{power}, and applying $\Gamma^m$, $\Theta^m$ and $\bigvee_{i=0}^{m-1}(\Theta^i)^{-1}\mathcal{Q}$ to the inequality \eqref{ineq2}, dividing by $m$ and letting $m$ go to infinity, one has
\begin{equation*}
h_{\mu}(\Gamma\mid\mathcal{D}_{\mathcal{H}})\leq h_{\mu}(\pi_{\mathcal{E}}^{-1}\mathcal{Q}\mid \mathcal{D}_{\mathcal{H}}) +h(\Theta\mid \mathcal{Q}),
\end{equation*}
and we complete the proof.
\end{proof}

Now we can prove Theorem \ref{prop2}, which gives a variational inequality between defect of upper semi-continuity of the relative entropy function on invariant measures  and the relative tail entropy.

\begin{proof}[Proof of Theorem \ref{prop2} ]
Let $\mathcal{Q}$ be a finite  random cover of $\mathcal{E}$. By Lemma \ref{Fact666}, there exists a finite measurable partition $\mathcal{R}$ of $\mathcal{E}$ with $\mathcal{Q}\prec \mathcal{R}$ and $m(\partial R)=0$ for each $R\in \mathcal{R}$.
By Proposition \ref{prop1} and $\pi_{\mathcal{E}}\Gamma=\Theta\pi_{\mathcal{E}}$, for each $\mu\in \mathcal{I}_{\mathbb{P}}(\mathcal{H})$ and $n\in \mathbb{N}$,
\begin{align*}
 h_{\mu}(\Gamma\mid \mathcal{D}_{\mathcal{H}})
 &\leq h_{\mu}(\pi_{\mathcal{E}}^{-1}\mathcal{R}\mid\mathcal{D}_{\mathcal{H}})+h(\Theta\mid \mathcal{R})\\
 &\leq \frac{1}{n}H_{\mu}\big(
 \bigvee_{i=0}^{n-1}(\Gamma^i)^{-1}\pi_{\mathcal{E}}^{-1}\mathcal{R}\mid\mathcal{D}_{\mathcal{H}} \big)+h(\Theta\mid \mathcal{Q})\\
 \end{align*}
Then by Lemma \ref{lem2},
\begin{align*}
\limsup_{\mu\rightarrow m}h_{\mu}(\Gamma\mid \mathcal{D}_{\mathcal{H}})
&\leq \limsup_{\mu\rightarrow m}\frac{1}{n}H_{\mu}(
 \bigvee_{i=0}^{n-1}(\Gamma^i)^{-1}\pi_{\mathcal{E}}^{-1}\mathcal{R}
 \mid\mathcal{D}_{\mathcal{H}}
 )+h(\Theta\mid \mathcal{Q})\\
 &\leq \frac{1}{n}H_m(
 \bigvee_{i=0}^{n-1}(\Gamma^i)^{-1}\pi_{\mathcal{E}}^{-1}\mathcal{R}
 \mid\mathcal{D}_{\mathcal{H}}
 )+h(\Theta\mid \mathcal{Q}).
\end{align*}
Thus
\begin{equation*}
\limsup_{\mu\rightarrow m}h_{\mu}(\Gamma\mid \mathcal{D}_{\mathcal{H}})\leq h_m(\Gamma\mid \mathcal{D}_{\mathcal{H}}
 )+h(\Theta\mid \mathcal{Q}).
\end{equation*}
Since the partition $\mathcal{Q}$ is arbitrary, then
$h^*_m(\Gamma\mid\mathcal{D}_{\mathcal{H}} )\leq h^*(\Theta)$.
\end{proof}

Next we are concerned with the variational principle related with the relative entropy of $\mathcal{E}^{(2)}$ and the relative tail entropy of $\Theta$.
Recall that $\mathcal{E}^{(2)}=\{(\omega,x,y):x,y\in \mathcal{E}_{\omega}\}$ is a measurable subset of $\Omega\times X^2$ with respect to the product $\sigma-$algebra $\mathcal{F}\times \mathcal{B}^2$ and
$\mathcal{A}_{\mathcal{E}^{(2)}}=\{(A\times X)\cap\mathcal{E}^{(2)}: A\in \mathcal{F}\times \mathcal{B}\}$.
The skew product transformation $\Theta^{(2)}:\mathcal{E}^{(2)}\rightarrow \mathcal{E}^{(2)}$ is given by  $\Theta^{(2)}(\omega,x,y)=(\vartheta\omega,T_{\omega}x, T_{\omega}y)$.
Let $\mathcal{E}_1, \mathcal{E}_2$ be two copies, {\it i.e.},  $\mathcal{E}_1=\mathcal{E}_2=\mathcal{E}$, and   $\pi_{\mathcal{E}_i}$  be the natural projection from $\mathcal{E}^{(2)}$ to $\mathcal{E}_i$ with  $\pi_{\mathcal{E}_i}(\omega,x_1,x_2)=(\omega,x_i)$, i=1, 2.

\begin{proposition}\label{prop3}
Let $T$ be a continuous bundle RDS on $\mathcal{E}$ and $\mathcal{Q}=\{Q_1,\dots,Q_k\}$ be an open random  cover of $\mathcal{E}$. There exists a probability measure $\mu_{\mathcal{Q}}\in\mathcal{I}_{\mathbb{P}}(\mathcal{E}^{(2)})$ such that
\begin{enumerate}
\item[(i)]
$
 h_{\mu_{\mathcal{Q}}}( \Theta^{(2)}  \mid \mathcal{A}_{\mathcal{E}^{(2)}})\geq h(\Theta\mid \mathcal{Q})-\frac{1}{k},
$
\item[(ii)]
$\mu_{\mathcal{Q}}$ is supported on the set $\bigcup\limits_{j=1}^k\{(\omega,x,y)\in \mathcal{E}^{(2)}: x, y\in \overline{Q_j}(\omega)\}$.
\end{enumerate}
\end{proposition}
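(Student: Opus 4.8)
The plan is to realize the relative tail entropy $h(\Theta\mid\mathcal{Q})$ as a relative measure-theoretic entropy of the second factor, by constructing $\mu_{\mathcal{Q}}$ as a weak-$*$ limit of Cesàro averages of fibered product measures, in the spirit of the classical variational-principle argument (cf.\ \cite{Walters}) adapted to the conditioning $\sigma$-algebra $\mathcal{A}_{\mathcal{E}^{(2)}}=\pi_{\mathcal{E}_1}^{-1}\mathcal{A}$, which records the first coordinate. First I fix an open random cover $\mathcal{R}$ of $\mathcal{E}$ with $h_{\Theta}(\mathcal{R}\mid\mathcal{Q})\geq h(\Theta\mid\mathcal{Q})-\tfrac{1}{2k}$, and choose a measurable Lebesgue-number function $\omega\mapsto L(\omega)>0$ for the fiberwise cover $\mathcal{R}(\omega)$ of the compact set $\mathcal{E}_{\omega}$; put $\delta(\omega)=L(\omega)/3$. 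For each $n$ and $\mathbb{P}$-a.a.\ $\omega$, I select (by measurable selection, see \cite{Castaing}) an element $Q_{\omega}\in\mathcal{Q}^{(n)}(\omega)$ attaining the maximum $N(\mathcal{R}^{(n)}\mid\mathcal{Q}^{(n)})(\omega)$, together with a maximal $(n,\delta)$-separated subset $E_{\omega}\subset Q_{\omega}$ (points that are, at some time $i<n$, more than $\delta(\vartheta^{i}\omega)$ apart under $T_{\omega}^{i}$). Such a set is $(n,\delta)$-spanning, and since $\delta$ is a third of the Lebesgue number each dynamical $\delta$-ball lies in a single element of $\mathcal{R}^{(n)}(\omega)$; hence $|E_{\omega}|\geq N(Q_{\omega},\mathcal{R}^{(n)})(\omega)=N(\mathcal{R}^{(n)}\mid\mathcal{Q}^{(n)})(\omega)$. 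Writing $\lambda_{\omega}^{(n)}$ for the uniform measure on $E_{\omega}$, I define $\rho_n\in\mathcal{P}_{\mathbb{P}}(\mathcal{E}^{(2)})$ by the fiber product $d\rho_n(\omega,x,y)=d(\lambda_{\omega}^{(n)}\otimes\lambda_{\omega}^{(n)})(x,y)\,d\mathbb{P}(\omega)$.

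Since $E_{\omega}\subset Q_{\omega}\subset Q_{j_0}(\omega)$, where $j_0=j_0(\omega)$ is the first index of the cylinder $Q_{\omega}$, the measure $\rho_n$ is supported in the closed set $K=\bigcup_{j=1}^{k}\{(\omega,x,y)\in\mathcal{E}^{(2)}:x,y\in\overline{Q_j}(\omega)\}$; moreover $\Theta^{(2)}$ sends a point with both coordinates in $Q_{\omega}$ to one with both coordinates in $Q_{j_1}(\vartheta\omega)$, so every iterate $(\Theta^{(2)})^{i}_{*}\rho_n$ is again supported in $K$. I then set $\mu_n=\frac1n\sum_{i=0}^{n-1}(\Theta^{(2)})^{i}_{*}\rho_n$ and pass to a weak-$*$ convergent subsequence $\mu_{n_j}\to\mu_{\mathcal{Q}}$; by the Krylov--Bogolyubov argument (\cite[Theorem 1.5.8]{Arnold}) $\mu_{\mathcal{Q}}\in\mathcal{I}_{\mathbb{P}}(\mathcal{E}^{(2)})$, and since $K$ is closed, $\mu_{\mathcal{Q}}(K)=1$, which is exactly (ii).

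For (i), with $\mu_{\mathcal{Q}}$ now fixed I invoke Lemma \ref{Fact666} to choose a finite measurable partition $\mathcal{P}$ of $\mathcal{E}$ with fiber-diameter at most $\delta(\omega)$ and $\mu_{\mathcal{Q}}(\partial\mathcal{P})=0$; because the atoms are smaller than the Lebesgue number, $\mathcal{P}\succ\mathcal{R}$, and any two $(n,\delta)$-separated points lie in distinct atoms of $\mathcal{P}^{(n)}(\omega)$. Putting $\mathcal{S}=\pi_{\mathcal{E}_2}^{-1}\mathcal{P}$ and disintegrating $\rho_n$ over $\mathcal{A}_{\mathcal{E}^{(2)}}$, the conditional law of the second coordinate is $\lambda_{\omega}^{(n)}$, uniform on $|E_{\omega}|$ distinct atoms of $\mathcal{P}^{(n)}(\omega)$, whence $H_{\rho_n}(\mathcal{S}^{(n)}\mid\mathcal{A}_{\mathcal{E}^{(2)}})=\int\log|E_{\omega}|\,d\mathbb{P}\geq\int\log N(\mathcal{R}^{(n)}\mid\mathcal{Q}^{(n)})(\omega)\,d\mathbb{P}$, so that $\liminf_n\frac1n H_{\rho_n}(\mathcal{S}^{(n)}\mid\mathcal{A}_{\mathcal{E}^{(2)}})\geq h_{\Theta}(\mathcal{R}\mid\mathcal{Q})$. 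A block decomposition of $\mathcal{S}^{(n)}$ into translates of $\mathcal{S}^{(q)}$, together with subadditivity and concavity of the conditional entropy in the measure (established fiberwise, using the relative-entropy machinery of \cite{Kifer}) and the inclusion $(\Theta^{(2)})^{-1}\mathcal{A}_{\mathcal{E}^{(2)}}\subset\mathcal{A}_{\mathcal{E}^{(2)}}$, yields for each fixed $q$ the estimate $\frac1n H_{\rho_n}(\mathcal{S}^{(n)}\mid\mathcal{A}_{\mathcal{E}^{(2)}})\leq\frac1q H_{\mu_n}(\mathcal{S}^{(q)}\mid\mathcal{A}_{\mathcal{E}^{(2)}})+O(q/n)$. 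Since $\mu_{\mathcal{Q}}(\partial\mathcal{S}^{(q)})=0$, Lemma \ref{lem2} provides upper semi-continuity of $\mu\mapsto H_{\mu}(\mathcal{S}^{(q)}\mid\mathcal{A}_{\mathcal{E}^{(2)}})$ at $\mu_{\mathcal{Q}}$; letting $n_j\to\infty$ and then $q\to\infty$ gives $h_{\mu_{\mathcal{Q}}}(\Theta^{(2)}\mid\mathcal{A}_{\mathcal{E}^{(2)}})\geq h_{\mu_{\mathcal{Q}}}(\mathcal{S}\mid\mathcal{A}_{\mathcal{E}^{(2)}})\geq h_{\Theta}(\mathcal{R}\mid\mathcal{Q})\geq h(\Theta\mid\mathcal{Q})-\tfrac1k$.

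The hard part is the last step: transporting the fiberwise counting bound for $\rho_n$ through the Cesàro average to $\mu_{\mathcal{Q}}$ while the conditioning $\sigma$-algebra $\mathcal{A}_{\mathcal{E}^{(2)}}$ is only backward-invariant. This forces the separated sets to be defined by a \emph{purely metric} criterion, so that $\mu_{\mathcal{Q}}$ may be extracted \emph{before} the entropy-carrying partition $\mathcal{P}$ is chosen, and it relies on the measurable Lebesgue-number function to bridge the covering number $N(\mathcal{R}^{(n)}\mid\mathcal{Q}^{(n)})$ with genuine separation in the fibers. The supporting measurable-selection statements for $\omega\mapsto L(\omega)$, $Q_{\omega}$, $E_{\omega}$ and $\omega\mapsto\lambda_{\omega}^{(n)}$ are routine but must be verified through \cite{Castaing,Crauel}. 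The case $h(\Theta\mid\mathcal{Q})=\infty$ is treated by choosing $\mathcal{R}$ with $h_{\Theta}(\mathcal{R}\mid\mathcal{Q})$ arbitrarily large and passing to a further diagonal limit.
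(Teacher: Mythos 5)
Your proposal is correct and follows essentially the same route as the paper's proof: a maximal dynamically separated set inside a worst cylinder of $\mathcal{Q}^{(n)}$ (bridged to $N(\mathcal{R}^{(n)}\mid\mathcal{Q}^{(n)})$ via a Lebesgue number), Ces\`aro averaging with Krylov--Bogolyubov, a small-diameter null-boundary partition from Lemma \ref{Fact666}, the block/concavity estimate, and upper semi-continuity via Lemma \ref{lem2}. The only (immaterial) difference is that you place the fiber product $\lambda_{\omega}^{(n)}\otimes\lambda_{\omega}^{(n)}$ on $\mathcal{E}^{(2)}_{\omega}$, whereas the paper uses $\delta_{x}\otimes\lambda_{\omega}^{(n)}$ for a fixed $x\in Q(\omega)$; both yield the same conditional entropy $\log\,\mathrm{card}\,E_n(\omega)$ given $\mathcal{A}_{\mathcal{E}^{(2)}}$.
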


\begin{proof}
Let us choose an open random  cover $\mathcal{P}=\{P_1,\dots,P_l\}$ of $\mathcal{E}$ such that
$h_{\Theta}(\mathcal{P}\mid \mathcal{Q})\geq h(\Theta\mid \mathcal{Q})-\frac{1}{k}$.
Recall that $\mathfrak{U}(\mathcal{E})$ is the collection of all open random  covers on $\mathcal{E}$, $\mathcal{Q}^{(n)}=\bigvee_{i=0}^{n-1}(\Theta^i)^{-1}\mathcal{Q}$ and $\mathcal{Q}^{(n)}(\omega)=\bigvee_{i=0}^{n-1}(T_{\omega}^i)^{-1}\mathcal{Q}(\vartheta^i\omega)$.

Pick one element $Q(\omega)\in \mathcal{Q}^{(n)}(\omega)$ with $N(Q,\mathcal{P}^{(n)})(\omega)=N(\mathcal{P}^{(n)}\mid\mathcal{Q}^{(n)})(\omega)$ and a point $x\in Q(\omega)$.
Since $\mathcal{P}$ is an  open random cover of $\mathcal{E}$, by the compactness of $\mathcal{E}_{\omega}$, there exists a Lebesgue number $\eta(\omega)$ for the open cover $\{P_1(\omega),\dots,P_l(\omega)\}$ and a maximal $(n,\delta)-$separated subset $E_n(\omega)$ in $Q(\omega)$ such that
$$Q(\omega)\subset \bigcup_{y\in E_n(\omega)}B_y(\omega,n,\delta),$$
where $B_y(\omega,n,\delta)$ denote the open ball in $\mathcal{E}_{\omega}$ centered at $y$ of radius $1$ with respect to the metric
$d_n^{\omega}(x,y)=\max_{0\leq k < n}\{d(T_{\omega}^kx, T_{\omega}^ky)(\delta(\vartheta^k\omega))^{-1}\},$
for each $x,y \in \mathcal{E}_{\omega}$, {\it i.e.},
$B_y(\omega,n,\delta(\omega))=
\bigcap_{i=0}^{n-1}(T_{\omega}^i)^{-1}B(T^i_{\omega}y,\delta(\vartheta^i\omega))$.
Notice that for each $0\leq i\leq n-1$, the open ball $B(T^i_{\omega}y,\delta(\vartheta^i\omega))$ is contained in some element of $\mathcal{P}(\vartheta^i\omega)$, then
$B_y(\omega,n,\delta(\omega))$ must be contained in some element of $\mathcal{P}^{(n)}(\omega)$. This means that the cardinality of $E_n(\omega)$ is no less than $N(Q,\mathcal{P}^{(n)})(\omega)$.

Consider the probability measures $\sigma^{(n)}$ of $\mathcal{E}^{(2)}$ via their disintegrations
\begin{equation*}
\sigma^{(n)}_{\omega}=\frac{1}{\text{card}E_n(\omega)}\sum_{y\in E_n(\omega)}\delta_{(\omega,x,y)}
\end{equation*}
so that $d\sigma^{(n)}(\omega,x,y)=d\sigma^{(n)}_{\omega}d\mathbb{P}(\omega)$, and let
\begin{equation*}
\mu^{(n)}=\frac{1}{n}\sum_{i=0}^{n-1}(\Theta^{(2)})^i\sigma^{(n)},
\end{equation*}
 By the Krylov-Bogolyubov procedure for continuous RDS (see \cite[Theorem 1.5.8]{Arnold} or \cite[Lemma 2.1 (i)]{Kifer2001}), one can choose a subsequence $\{n_j\}$ such that $\mu^{(n_j)}$ convergence to some probability measure $\mu_{\mathcal{Q}}\in \mathcal{I}_{\mathbb{P}}(\mathcal{E}^{(2)})$.

Next we will check that the measure $\mu_{\mathcal{Q}}$ satisfies (i) and (ii).

Let $\nu=\pi_{\mathcal{E}_2}\mu_{\mathcal{Q}}$.
By Lemma \ref{Fact666}, choose a finite measurable partition $\mathcal{R}=\{R_1,\dots, R_q\}$ of $\mathcal{E}$ with $\nu(\partial R_i)=0, 0\leq i\leq q$ and $\text{diam}R_i(\omega)<\delta(\omega)$ for each $\omega$.
Set $\xi^{(n)}=\bigvee_{i=0}^{n-1}(\Theta^{(2)})^{-i}\pi_{\mathcal{E}_2}^{-1}\mathcal{R}$. Since $\pi_{\mathcal{E}_2}\Theta^{(2)}=\Theta\pi_{\mathcal{E}_2}$, then
$\xi^{(n)}=\pi_{\mathcal{E}_2}^{-1}\bigvee_{i=0}^{n-1}\Theta^{-i}\mathcal{R}
=\pi_{\mathcal{E}_2}^{-1}\mathcal{R}^{(n)}$. Denote by $\xi^{(n)}=\{D\}$ for convenience, where $D$ is a typical element of $\pi_{\mathcal{E}_2}^{-1}\mathcal{R}^{(n)}$.

For each $\omega$, let  $\pi_{X_1}^{-1}\mathcal{B}(\omega)=\{(B\times X_2)\cap
\mathcal{E}^{(2)}_{\omega}: B\in \mathcal{B}\}$, where $X_1, X_2$ are two copies of the  space $X$ and $\pi_{X_1}$ is the natural projection from the product space $X_1\times X_2$ to the space $X_1$.
We abbreviate it as $\pi_{X_1}^{-1}\mathcal{B}$ for convenience .

Since each element of $\mathcal{R}^{(n)}(\omega)$ contains at most one element of $E_n(\omega)$, one has
\begin{equation}\label{eq1}
E(1_{D(\omega)}\mid \pi^{-1}_{X_1}\mathcal{B}) (x,y)=\sigma^{(n)}_{\omega}(D(\omega)).
\end{equation}
Indeed, for each $d\in \pi_{X_1}\mathcal{B}$,
\begin{align*}
&\int_{d}E(1_{D(\omega)}\mid \pi_{X_1}\mathcal{B})d\sigma^{(n)}_{\omega}
&=\int_{d}1_{D(\omega)}d\sigma^{(n)}_{\omega}
&=\int 1_{d\cap D(\omega) }d\sigma^{(n)}_{\omega}= \sigma^{(n)}_{\omega}(d\cap D(\omega )).
\end{align*}
Since $d=(B\times X_2)\cap\mathcal{E}^{(2)}_{\omega}$ for some $B\in \mathcal{B}$ and
$D(\omega)=(X_1\times C)\cap \mathcal{E}^{(2)}_{\omega}$ for some $C\in \mathcal{R}^{(n)}(\omega)$,
Then $\sigma^{(n)}_{\omega}(d\cap D(\omega ))=\sigma^{(n)}_{\omega}((B\times C)\cap\mathcal{E}^{(2)}_{\omega} )$. By the construction of $\sigma^{(n)}_{\omega}$, one have
\begin{equation*}
\sigma^{(n)}_{\omega}((B\times X_2)\cap\mathcal{E}^{(2)}_{\omega} )=
\begin{cases}
1, & x\in B,\\
0, &\text{otherwise}.
\end{cases}
\end{equation*}
Then
\begin{align*}
\sigma^{(n)}_{\omega}((B\times C)\cap\mathcal{E}^{(2)}_{\omega} )=&\sigma^{(n)}_{\omega}((X_1\times C)\cap \mathcal{E}^{(2)}_{\omega})\cdot\sigma^{(n)}_{\omega}((B\times X_2)\cap\mathcal{E}^{(2)}_{\omega} )\\
=&\int_{(B\times X_2)\cap\mathcal{E}^{(2)}_{\omega}}\sigma^{(n)}_{\omega}((X_1\times C)\cap \mathcal{E}^{(2)}_{\omega})d\sigma^{(n)}_{\omega}\\
=&\int_d   \sigma^{(n)}_{\omega}(D(\omega))    d\sigma^{(n)}_{\omega},
\end{align*}
and so
\begin{equation*}
\int_{d}E(1_{D(\omega)}\mid \pi_{X_1}\mathcal{B})d\sigma^{(n)}_{\omega}
=\int_d   \sigma^{(n)}_{\omega}(D(\omega))    d\sigma^{(n)}_{\omega}
\end{equation*}
for all $d\in \pi_{X_1}\mathcal{B}$, which implies the  equality \eqref{eq1} holds.
Thus
\begin{align*}
H_{\sigma^{(n)}_{\omega}}(\xi^{(n)}(\omega))
&=\int\sum_{D(\omega)\in \xi^{(n)}(\omega)}-E(1_{D(\omega)}\mid \pi_{X_1}\mathcal{B})\log E(1_{D(\omega)}\mid \pi_{X_1}\mathcal{B})d\sigma^{(n)}_{\omega}\\
&=\sum_{D(\omega)\in\xi^{(n)}(\omega)}-\sigma^{(n)}_{\omega}(D(\omega))\log \sigma^{(n)}_{\omega}(D(\omega))\\
&=\log \text{card}E_n(\omega)\geq \log N(\mathcal{P}^{(n)}\mid \mathcal{Q}^{(n)})(\omega).
\end{align*}

Since for each $G\in \mathcal{A}_{\mathcal{E}^{(2)}}$,
\begin{align*}
\int_GE(1_D\mid \mathcal{A}&_{\mathcal{E}^{(2)}})d\sigma^{(n)}
=\int1_G\cdot E(1_D\mid \mathcal{A}_{\mathcal{E}^{(2)}})d\sigma^{(n)}\\
&=\int E(1_G\cdot1_D\mid \mathcal{A}_{\mathcal{E}^{(2)}})d\sigma^{(n)}\\
&=\int 1_{G\cap D}(\omega,x,y)d\sigma^{(n)}\\
&=\iint 1_{(G\cap D)(\omega)}(x,y)d\sigma^{(n)}_{\omega}d\mathbb{P}\\
&=\iint E(1_{(G\cap D)(\omega)}\mid \pi_{X_1}\mathcal{B})(x,y)d\sigma^{(n)}_{\omega}d\mathbb{P}\\
&=\iint 1_{G(\omega)}(x,y)\cdot E(1_{D(\omega)}\mid \pi_{X_1}\mathcal{B})(x,y))d\sigma^{(n)}_{\omega}d\mathbb{P}\\
&=\iint 1_{G}(\omega,x,y) E(1_{D(\omega)}\mid \pi_{X_1}\mathcal{B})(x,y))d\sigma^{(n)}_{\omega}d\mathbb{P}\\
&=\iint_{G} E(1_{D(\omega)}\mid\pi_{X_1}\mathcal{B})(x,y))d\sigma^{(n)}_{\omega}d\mathbb{P}\\
&=\int_G E(1_{D(\omega)}\mid\pi_{X_1}\mathcal{B})d\sigma^{(n)}.
\end{align*}
Then
\begin{equation*}
E(1_D\mid \mathcal{A}_{\mathcal{E}^{(2)}})(\omega,x,y)=E(1_{D(\omega)}\mid \pi_{X_1}\mathcal{B})(x,y)\,\, \mathbb{P}-a.s..
\end{equation*}
Therefore,
\begin{align}\label{ineq1}
H_{\sigma^{(n)}}&(\xi^{(n)}\mid \mathcal{A}_{\mathcal{E}^{(2)}})=
\int\sum_{D\in \xi^{(n)}}-E(1_D\mid \mathcal{A}_{\mathcal{E}^{(2)}})\log E(1_D\mid \mathcal{A}_{\mathcal{E}^{(2)}})d\sigma^{(n)}\nonumber\\
&=\iint\sum_{D(\omega)\in \xi^{(n)}(\omega)}-E(1_{D(\omega)}\mid \pi_{X_1}\mathcal{B})\log E(1_{D(\omega)}\mid \pi_{X_1}\mathcal{B})d\sigma^{(n)}_{\omega}d\mathbb{P}\nonumber\\
&=\int H_{\sigma^{(n)}_{\omega}}(\xi^{(n)}(\omega))d\mathbb{P}
\geq \int  \log N(\mathcal{P}^{(n)}\mid \mathcal{Q}^{(n)})(\omega)   d\mathbb{P}.
\end{align}

For $0\leq j<m<n$, one can cut the segment $(0,n-1)$ into disjoint union of $[\frac{n}{m}]-2$ segments $(j,j+m-1),\dots, (j+km,j+(k+1)m-1)$, $\dots$ and less than $3m$ other natural numbers. Then
\begin{align*}
H_{\sigma^{(n)}}(\xi^{(n)}\mid \mathcal{A}_{\mathcal{E}^{(2)}})
&\leq \sum_{k=0}^{[\frac{n}{m}]-2}H_{\sigma^{(n)}}
\big(
\bigvee_{i=j+km}^{j+(k+1)m-1}(\Theta^{(2)})^{-i}\pi_{\mathcal{E}_2}^{-1}\mathcal{R}\mid \mathcal{A}_{\mathcal{E}^{(2)}}
\big)
+3m\log q\\
&\leq \sum_{k=0}^{[\frac{n}{m}]-2}
H_{(\Theta^{(2)})^{j+km}\sigma^{(n)}}(\xi^{(m)}\mid \mathcal{A}_{\mathcal{E}^{(2)}})+3m\log q.
\end{align*}
By summing over all $j$, $0\leq j<m$ and considering the concavity of the entropy function $H_{(\cdot)}$, one has
\begin{align*}
mH_{\sigma^{(n)}}(\xi^{(n)}\mid \mathcal{A}_{\mathcal{E}^{(2)}})
&\leq \sum_{k=0}^{n-1}H_{(\Theta^{(2)})^k\sigma^{(n)}}(\xi^{(m)}\mid \mathcal{A}_{\mathcal{E}^{(2)}})+3m^2\log q\\
&\leq nH_{\mu^{(n)}}(\xi^{(m)}\mid \mathcal{A}_{\mathcal{E}^{(2)}})+3m^2\log q.
\end{align*}
Then by inequality \eqref{ineq1},
\begin{align*}
\frac{1}{m}H_{\mu^{(n)}}(\xi^{(m)}\mid \mathcal{A}_{\mathcal{E}^{(2)}})
&\geq
\frac{1}{n}\int   \log N(\mathcal{P}^{(n)}\mid \mathcal{Q}^{(n)})(\omega)   d\mathbb{P}-\frac{3m}{n}\log q
\end{align*}
Replacing the sequence $\{n\}$ by the above selected subsequence $\{n_j\}$ and letting $j\rightarrow\infty$, by Lemma \ref{lem2},
\begin{equation*}
\frac{1}{m}H_{\mu_{\mathcal{Q}}}(\xi^{(m)}\mid \mathcal{A}_{\mathcal{E}^{(2)}})
\geq \liminf_{j\rightarrow \infty}\frac{1}{n_j}\int \log N(\mathcal{P}^{(n_j)}\mid \mathcal{Q}^{(n_j)})(\omega)   d\mathbb{P}.
\end{equation*}
Then
$$
\frac{1}{m}H_{\mu_{\mathcal{Q}}}(\xi^{(m)}\mid \mathcal{A}_{\mathcal{E}^{(2)}})\geq h(\Theta\mid \mathcal{Q})-\frac{1}{k}.
$$
By letting $m\rightarrow \infty$, one get
$h_{\mu_{\mathcal{Q}}}(\pi_{\mathcal{E}_2}^{-1}\mathcal{R}\mid \mathcal{A}_{\mathcal{E}^{(2)}}) \geq h(\Theta\mid \mathcal{Q})-\frac{1}{k}.$

Let $\mathcal{R}_1\prec \cdots\prec \mathcal{R}_n\prec \cdots $ be an increasing sequence of finite measurable partitions with $\bigvee_{i=1}^{\infty}=\mathcal{A}$, by Lemma l.6 in \cite{Kifer} one has
$h_{\mu_{\mathcal{Q}}}(\Theta^{(2)}\mid \mathcal{A}_{\mathcal{E}^{(2)}}) \geq h(\Theta\mid \mathcal{Q}),$ which shows that the measure $\mathcal{Q}$ satisfies the property (i).

For the other part of this proposition, let $n\in\mathbb{N}$. Recall that $Q\in \mathcal{Q}^{(n)}$ and notice that $\mathcal{Q}^{(n)}\succ (\Theta^j)^{-1}\mathcal{Q}$ for all $0\leq j<n$.
Let $Q^{(2)}=\{(\omega,x,y)\in \mathcal{E}^{(2)}: x, y\in Q(\omega)\}$ and $Q_i^{(2)}=\{(\omega,x,y)\in \mathcal{E}^{(2)}: x, y\in Q_i(\omega)\}$, $1\leq i\leq k$. All of them are the measurable subsets of $\mathcal{E}^{(2)}$ with the product $\sigma-$algebra $\mathcal{F}\times \mathcal{B}^2$, and $Q^{(2)}$ is contained in $(\Theta^{(2)})^{-j}Q_i^{(2)}$ for some $1\leq i \leq k$ and $0\leq j<n$.
It follows from the construction of $\mu^{(n)}$ that
\begin{align*}
\mu^{(n)}(\bigcup_{i=1}^kQ_i^{(2)})=\frac{1}{n}\sum_{j=0}^{n-1}\sigma^{(n)}\big(
(\Theta^{(2)})^{-j}(\bigcup_{i=1}^kQ_i^{(2)})
\big)\\
\geq \frac{1}{n}\sum_{j=0}^{n-1}\sigma^{(n)}(Q^{(2)})=\sigma^{(n)}(Q^{(2)})=1.
\end{align*}
Then $$\mu^{(n)}\big(
\bigcup_{i=1}^k\{(\omega, x, y): x, y \in \overline{Q_i}(\omega)\}=1
\big).$$
Therefore the probability measure $\mu_{\mathcal{Q}}$ satisfies the property (ii) and we complete the proof.
\end{proof}

\begin{proposition}\label{prop4}
Let $T$ be a continuous bundle RDS on $\mathcal{E}$. There exists one probability measure $m\in \mathcal{I}_{\mathbb{P}}(\mathcal{E}^{(2)})$, which is supported on $\{(\omega,x,x)\in \mathcal{E}^{(2)}:x\in \mathcal{E}_{\omega}\}$, and satisfies $h^*_m(\Theta^{(2)} \mid \mathcal{A}_{\mathcal{E}^{(2)}})=h^*(\Theta)$.

\end{proposition}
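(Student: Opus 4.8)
The plan is to read off the easy inequality from the variational inequality already proved and then build, by hand, a diagonal--supported invariant measure that saturates it. Specialising Theorem~\ref{prop2} to the case $S=T$ (so that $\mathcal{H}=\mathcal{E}^{(2)}$, $\Gamma=\Theta^{(2)}$ and $\mathcal{D}_{\mathcal{H}}=\mathcal{A}_{\mathcal{E}^{(2)}}$) gives $h^*_\mu(\Theta^{(2)}\mid\mathcal{A}_{\mathcal{E}^{(2)}})\le h^*(\Theta)$ for \emph{every} $\mu\in\mathcal{I}_{\mathbb{P}}(\mathcal{E}^{(2)})$. Hence the entire content of the proposition is to produce one $m\in\mathcal{I}_{\mathbb{P}}(\mathcal{E}^{(2)})$, carried by the diagonal $\Delta=\{(\omega,x,x):x\in\mathcal{E}_\omega\}$, for which the reverse inequality holds.

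First I would choose a sequence of open random covers $\mathcal{Q}_n=\{Q_{n,1},\dots,Q_{n,k_n}\}$ enjoying two properties at once: their fibrewise diameters $\diam Q_{n,j}(\omega)$ are at most $1/n$ for $\mathbb{P}$-a.e. $\omega$, and $h(\Theta\mid\mathcal{Q}_n)\to h^*(\Theta)$. Both can be secured together: from the definition of the infimum pick $\mathcal{V}_n$ with $h(\Theta\mid\mathcal{V}_n)<h^*(\Theta)+1/n$, refine it by any open random cover of mesh $<1/n$, and use the monotonicity $h(\Theta\mid\mathcal{Q})\le h(\Theta\mid\mathcal{V})$ for $\mathcal{Q}\succ\mathcal{V}$ from \eqref{n5}; refining further if necessary I also arrange $k_n\to\infty$, so the error terms $1/k_n$ in Proposition~\ref{prop3} vanish. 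Applying Proposition~\ref{prop3} to each $\mathcal{Q}_n$ produces $\mu_n\in\mathcal{I}_{\mathbb{P}}(\mathcal{E}^{(2)})$ with $h_{\mu_n}(\Theta^{(2)}\mid\mathcal{A}_{\mathcal{E}^{(2)}})\ge h(\Theta\mid\mathcal{Q}_n)-1/k_n$ and supported on $\bigcup_{j}\{(\omega,x,y):x,y\in\overline{Q_{n,j}}(\omega)\}$. By compactness of $\mathcal{I}_{\mathbb{P}}(\mathcal{E}^{(2)})$ I pass to a subsequence $\mu_{n_j}\to m\in\mathcal{I}_{\mathbb{P}}(\mathcal{E}^{(2)})$.

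Two observations about $m$ then close the argument. Since $\diam\overline{Q_{n,j}}(\omega)=\diam Q_{n,j}(\omega)\le 1/n$, the support condition forces $d(x,y)\le 1/n$ on the support of $\mu_n$, so testing the convergence $\mu_{n_j}\to m$ against the bounded random continuous function $(\omega,x,y)\mapsto d(x,y)$ gives $\int d\,dm=\lim_j\int d\,d\mu_{n_j}=0$; thus $m$ is carried by $\Delta$. On $\Delta$ the generating sets $(A\times X)\cap\mathcal{E}^{(2)}$ of $\mathcal{A}_{\mathcal{E}^{(2)}}$ already recover the full coordinate $(\omega,x)$, so modulo $m$-null sets $\mathcal{A}_{\mathcal{E}^{(2)}}$ is the entire $\sigma$-algebra and $h_m(\Theta^{(2)}\mid\mathcal{A}_{\mathcal{E}^{(2)}})=0$. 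Since this value is finite, the defect reduces to $h^*_m(\Theta^{(2)}\mid\mathcal{A}_{\mathcal{E}^{(2)}})=\limsup_{\mu\to m}h_\mu(\Theta^{(2)}\mid\mathcal{A}_{\mathcal{E}^{(2)}})$, and feeding in $\mu_{n_j}\to m$ together with $h_{\mu_{n_j}}(\Theta^{(2)}\mid\mathcal{A}_{\mathcal{E}^{(2)}})\ge h(\Theta\mid\mathcal{Q}_{n_j})-1/k_{n_j}\to h^*(\Theta)$ yields $h^*_m(\Theta^{(2)}\mid\mathcal{A}_{\mathcal{E}^{(2)}})\ge h^*(\Theta)$. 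Combined with Theorem~\ref{prop2} this gives equality.

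The step I expect to be most delicate is reconciling the two limiting procedures: the lower bounds of Proposition~\ref{prop3} live along the approximating sequence, whereas $h_{(\cdot)}(\Theta^{(2)}\mid\mathcal{A}_{\mathcal{E}^{(2)}})$ is in general only upper semi-continuous, so one cannot simply pass the bound to the limit measure. The resolution is structural rather than analytic: because $m$ concentrates on the diagonal, where the relative entropy collapses to $0$, the large values along $\{\mu_{n_j}\}$ are absorbed \emph{entirely} into the defect of upper semi-continuity instead of into $h_m$. Consequently the two facts requiring genuine care are exactly the verification that $m$ is diagonal (the convergence computation for $\int d\,dm$) and that the conditioning $\sigma$-algebra $\mathcal{A}_{\mathcal{E}^{(2)}}$ degenerates to the full $\sigma$-algebra on $\Delta$.
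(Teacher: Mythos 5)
Your proposal is correct and follows essentially the same route as the paper: apply Proposition~\ref{prop3} along a refining sequence of open random covers, take a weak limit $m$ of the resulting measures, show $m$ lives on the diagonal so that $h_m(\Theta^{(2)}\mid\mathcal{A}_{\mathcal{E}^{(2)}})=0$, and combine the resulting lower bound on the defect with Theorem~\ref{prop2}. Your variant of the diagonal-support step (integrating $(\omega,x,y)\mapsto d(x,y)$ against covers of mesh $1/n$ rather than intersecting the supports) and your explicit handling of $k_n\to\infty$ and of the cofinality of the cover sequence are only cosmetic refinements of the paper's argument.
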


\begin{proof}
Let $\mathcal{Q}_1\prec \cdots \prec \mathcal{Q}_n\prec \cdots$ be an increasing sequence of open random cover of $\mathcal{E}$. Denote by $\mathcal{Q}_n=\{Q_j^{(n)}\}_{j=1}^{k_n}$. By Property \ref{prop3}, for each $n\in \mathbb{N}$, there exists one probability measure $\mu_n\in \mathcal{I}_{\mathbb{P}}(\mathcal{E}^{(2)})$ such that $h_{\mu_n}(\Theta^{(2)}\mid \mathcal{A}_{\mathcal{E}^{(2)}})\geq h(\Theta\mid \mathcal{Q}_n)-\frac{1}{k_n}$  and $\mu_n$ is supported on $\bigcup_{j=1}^{k_n}\{(\omega,x,y):x,y\in  \overline{Q_j^{(n)}}(\omega)\}$.
Let $m$ be some limit point of the sequence of  $\mu_n$, then $m\in \mathcal{I}_{\mathbb{P}}(\mathcal{E}^{(2)})$ (see \cite[Lemma 2.1 (i)]{Kifer2001}) and
\begin{equation*}
\limsup_{\mu\rightarrow m}h_{\mu}(\Theta^{(2)}\mid \mathcal{A}_{\mathcal{E}^{(2)}})\geq \liminf_{n\rightarrow \infty}h_{\mu_n}(\Theta^{(2)}\mid \mathcal{A}_{\mathcal{E}^{(2)}})\geq \inf_n h(\Theta\mid \mathcal{Q}_n)=h^*(\Theta).
\end{equation*}

On the other hand, notice that the support of $m$
\begin{equation*}
\text{supp}m=\bigcap_{l=1}^{\infty}\bigcup_{j=1}^{k_{n_l}}\{(\omega,x,y): x, y\in \overline{Q_j^{(n_l)}}(\omega)\},
\end{equation*}
where $\{n_l\}$ is the subsequence of $\{n\}$ such that $\mu_{n_l}$ convergence to $m$ in the sense of the narrow topology.
Since $\mathcal{Q}_{n_l}$ is a refining sequence of measurable partition on $\mathcal{E}$, then
\begin{equation*}
\text{supp}m=\{(\omega,x,x)\in \mathcal{E}^{(2)}:x\in \mathcal{E}_{\omega}\}.
\end{equation*}
Thus for every finite measurable partition $\xi=\{\xi_1,\cdots,\xi_k\}$ on $\mathcal{E}$,
\begin{equation*}
m(\pi_{\mathcal{E}_1}^{-1}\xi_i)=m(\pi_{\mathcal{E}_1}^{-1}\xi_i\cap \text{supp}m)=m(\pi_{\mathcal{E}_2}^{-1}\xi_i)),\, 1\leq i\leq k,
\end{equation*}
This means $\pi_{\mathcal{E}_1}^{-1}\xi$ and $\pi_{\mathcal{E}_2}^{-1}\xi$ coincide up to sets of $m-$measure zero.
Observe that $E(1_{\pi_{\mathcal{E}_1}^{-1}\xi_i}\mid \mathcal{A}_{\mathcal{E}^{(2)}})=1_{\pi_{\mathcal{E}_1}^{-1}\xi_i}$ $\mathbb{P}-$a.s. for all $1\leq i\leq k$.
Then
\begin{equation*}
H_m(\pi^{-1}_{\mathcal{E}_2}\xi\mid \mathcal{A}_{\mathcal{E}^{(2)}})=H_m(\pi^{-1}_{\mathcal{E}_1}\xi\mid \mathcal{A}_{\mathcal{E}^{(2)}})=0,
\end{equation*}
and $h_m(\Theta^{(2)}\mid \mathcal{A}_{\mathcal{E}^{(2)}})=0$ by the definition of the relative entropy.
Hence,
\begin{equation*}
h^*_m(\Theta^{(2)}\mid \mathcal{A}_{\mathcal{E}^{(2)}})=\limsup_{\mu\rightarrow m}h_{\mu}(\Theta^{(2)}\mid \mathcal{A}_{\mathcal{E}^{(2)}})-h_m(\Theta^{(2)}\mid \mathcal{A}_{\mathcal{E}^{(2)}})\geq h^*(\Theta).
\end{equation*}
By Theorem \ref{prop2}, $h^*_m(\Theta^{(2)}\mid \mathcal{A}_{\mathcal{E}^{(2)}})\leq h^*(\Theta)$ and we complete the proof.
\end{proof}

 The  variational principle  stated in Theorem \ref{theo1} follows directly from Theorem \ref{prop2} and Proposition \ref{prop4}.

We are now in a position to prove that the relative tail entropy of a continuous bundle RDS is equal to that of its factor under the principal extension.

\begin{proof}[Proof of Theorem \ref{th3}]
Denote by
$\mathcal{G}^{(2)}=\{(\omega, y, z): y, z\in \mathcal{G}_{\omega}\}$, which is a measurable subset of $\Omega\times Y\times Y$ with respect to the product $\sigma-$algebra $\mathcal{F}\times \mathcal{C}^2$.
Let $\phi:\mathcal{G}^{(2)}\rightarrow \mathcal{E}^{(2)}$ be the map induced by the factor transformation $\pi$ as $\phi (\omega,y,z)=(\omega,\pi_{\omega}y,\pi_{\omega}z)$. Then $\phi$ is a factor transformation from $(\Lambda^{(2)},\mathcal{G}^{(2)})$ to $(\Theta^{(2)},\mathcal{E}^{(2)})$.

Let $m\in \mathcal{I}_{\mathbb{P}}(\mathcal{G}^{(2)})$ and  $\alpha:\mathcal{G}^{(2)}\rightarrow\mathcal{G}$ be the natural projection  defined as
$\alpha(\omega,y,z)=(\omega,y)$.
By the equality 4.18 in \cite{Down2011},  for each $m\in \mathcal{I}_{\mathbb{P}}(\mathcal{G}^{(2)})$,
$h_m(\Lambda^{(2)}\mid \mathcal{D}_{\mathcal{G}^{(2)}})=h_{\alpha m}(\Lambda,\mathcal{G}),$
where $h_{\alpha m}(\Lambda, \mathcal{G})$ is the usual measure-theoretical entropy.
Let $\beta:\mathcal{E}^{(2)}\rightarrow\mathcal{E}$ be the natural projection  defined as
$\beta(\omega,x,u)=(\omega,x)$.
Then $\phi m \in \mathcal{I}_{\mathbb{P}}(\mathcal{E}^{(2)})$ and  $h_{\phi m}(\Theta^{(2)}\mid \mathcal{A}_{\mathcal{E}^{(2)}})=h_{\beta(\phi m)}(\Theta, \mathcal{E}) $.

Notice that $\pi\alpha=\beta\phi$.
one obtain $h_{\beta(\phi m)}(\Theta, \mathcal{E})=h_{\pi\alpha m}(\Theta, \mathcal{E}) $.
Since the continuous bundle RDS $S$ is an principal extension of the RDS $T$ via the factor transformation $\pi$,
by the Abramov-Rokhlin formula (see \cite{BogenCrau,Liu2005})
one has $h_{\pi\alpha m}(\Theta, \mathcal{E})=h_{\alpha m}(\Lambda,\mathcal{G})$.
It follows that
$h_m(\Lambda^{(2)}\mid \mathcal{D}_{\mathcal{G}^{(2)}})=h_{\phi m}(\Theta^{(2)}\mid \mathcal{A}_{\mathcal{E}^{(2)}})$,
and then
$h^*_m(\Lambda^{(2)}\mid \mathcal{D}_{\mathcal{G}^{(2)}})=h^*_{\phi m}(\Theta^{(2)}\mid \mathcal{A}_{\mathcal{E}^{(2)}})$.
Thus by Theorem \ref{theo1},
$$h^*(\Lambda)=\max_{m\in \mathcal{I}_{\mathbb{P}}(\mathcal{G}^{(2)})}h^*_m(\Lambda^{(2)}\mid \mathcal{D}_{\mathcal{G}^{(2)}})\leq \max_{\mu\in \mathcal{I}_{\mathbb{P}}(\mathcal{E}^{(2)})}h^*_{\mu}(\Theta^{(2)}\mid \mathcal{A}_{\mathcal{E}^{(2)}})=h^*(\Theta).$$
Since for each $\mu \in \mathcal{I}_{\mathbb{P}}(\mathcal{E}^{(2)})$, by Proposition \ref{prop33}, there exists some $m\in \mathcal{I}_{\mathbb{P}}(\mathcal{G}^{(2)})$ such that $\phi m=\mu.$
Therefore the other part of the above inequality holds and we complete the proof.
\end{proof}

%

\end{document}